\pgfplotsset{compat=1.15}
\numberwithin{equation}{section}
\def\@tocline#1#2#3#4#5#6#7{\relax
	\ifnum #1>\c@tocdepth 
	\else
	\par \addpenalty\@secpenalty\addvspace{#2}%
	\begingroup \hyphenpenalty\@M
	\@ifempty{#4}{%
		\@tempdima\csname r@tocindent\number#1\endcsname\relax
	}{%
		\@tempdima#4\relax
	}%
	\parindent\z@ \leftskip#3\relax \advance\leftskip\@tempdima\relax
	\rightskip\@pnumwidth plus4em \parfillskip-\@pnumwidth
	#5\leavevmode\hskip-\@tempdima
	\ifcase #1
	\or\or \hskip 1em \or \hskip 2em \else \hskip 3em \fi%
	#6\nobreak\relax
	\hfill\hbox to\@pnumwidth{\@tocpagenum{#7}}\par
	\nobreak
	\endgroup
	\fi}
\title[]{Boundary Regularity of the Bergman Kernel in H\"older space}               
\author[]{Ziming Shi} 
\address{Department of Mathematics,
	Rutgers University - New Brunswick, Piscataway, NJ, 08854}
\email{zs327@rutgers.edu}
\keywords{Bergman kernel, Bergman projection, strictly pseudoconvex domain}    
\subjclass[2020]{32A25 (Primary), 32T15} 
\newcommand{\dist}{\operatorname{dist}}
\newcommand{\supp}{\operatorname{supp}}
\newtheorem{thm}{Theorem}[section]
\newtheorem{cor}[thm]{Corollary} 
\newtheorem{prop}[thm]{Proposition}
\newtheorem{lemma}[thm]{Lemma}
\theoremstyle{definition}
\newtheorem{defn}[thm]{Definition}
\newtheorem{exmp}[thm]{Example}
\newtheorem{ques}[thm]{Question}
\theoremstyle{remark}
\newtheorem{rem}[thm]{Remark}
\newtheorem*{clm}{Claim}
\newtheorem*{ack}{Acknowledgment}
\renewcommand{\th}[1]{\begin{thm}\label{#1}}
	\renewcommand{\eth}{\end{thm}}
\newcommand{\co}[1]{\begin{cor}\label{#1}}
	\newcommand{\eco}{\end{cor}}
\newcommand{\pr}[1]{\begin{prop}\label{#1}}
	\newcommand{\epr}{\end{prop}}
\newcommand{\df}[1]{\begin{defn}\label{#1}}
	\newcommand{\edf}{\end{defn}}
\newcommand{\ex}[1]{\begin{exmp}\label{#1}} 
	\newcommand{\eex}{\end{exmp}}
\newcommand{\qu}[1]{\begin{ques}\label{#1}}
	\newcommand{\equ}{\end{ques}}  
\newcommand{\mk}{\begin{rem}}
	\newcommand{\emk}{\end{rem}}
\newcommand{\cl}{\begin{clm}}
	\newcommand{\ecl}{\end{clm}} 
\newcommand{\ac}{\begin{ack}}
	\newcommand{\eac}{\end{ack}} 
\newcommand{\ga}{\begin{gather}}
\newcommand{\ega}{\end{gather}}
\newcommand{\gan}{\begin{gather*}}
\newcommand{\egan}{\end{gather*}}
\newcommand{\al}{\begin{gngn}}
	\newcommand{\eal}{\end{align}}
\newcommand{\aln}{\begin{align*}}
\newcommand{\ealn}{\end{align*}}
\newcommand{\eq}[1]{\begin{equation}\label{#1}}
\newcommand{\eeq}{\end{equation}}
\newcommand{\pa}{\partial{}}
\newcommand{\na}{\nabla}
\newcommand{\we}{\wedge}
\newcommand{\sm}{\setminus}
\newcommand{\DD}[2]{\frac{\partial #1}{\partial #2}}
\newcommand{\pp}[2]{\frac{\partial #1}{\partial #2}}
\newcommand{\R}{\mathbb{R}} 
\newcommand{\C}{\mathbb{C}}
\newcommand{\tit}{\textit} 
\newcommand{\0}{\mathbf{0}}
\newcommand{\ov}{\overline}
\newcommand{\wti}{\widetilde}
\newcommand{\hht}{\widehat}
\newcommand{\RE}{\operatorname{Re}}
\newcommand{\IM}{\operatorname{Im}}
\newcommand{\dbar}{\overline\partial}
\newcommand{\all}{\alpha}
\newcommand{\del}{\delta}
\newcommand{\Del}{\Delta}
\newcommand{\var}{\varphi}
\newcommand{\ve}{\varepsilon}
\newcommand{\om}{\omega}
\newcommand{\Om}{\Omega}
\newcommand{\la}{\lambda}
\newcommand{\ta}{\tau}
\newcommand{\gm}{\gamma}
\newcommand{\si}{\sigma}
\newcommand{\yh}{\frac{1}{2}}
\newcommand{\yf}{\frac{1}{4}}
\newcommand{\re}[1]{(\ref{#1})}
\newcommand{\rl}[1]{Lemma~\ref{#1}}
\newcommand{\rp}[1]{Proposition~\ref{#1}}
\newcommand{\rt}[1]{Theorem~\ref{#1}}
\newcommand{\rrem}[1]{Remark~\ref{#1}}
\newcommand{\nn}{\nonumber}
\newcommand{\nid}{\noindent}
\newcounter{pp}
\newcommand{\bpp}{\begin{list}{$\hspace{-1em}\alph{pp})$}{\usecounter{pp}}}
	\newcommand{\epp}{\end{list}}
\newcounter{ppp}
\newcommand{\bppp}{\begin{list}{$\hspace{-1em}(\roman{ppp})$}{\usecounter{ppp}}}
	\newcommand{\eppp}{\end{list}}
\newcommand{\Kc}{\mathcal{K}} 
\newcommand{\Lc}{\mathcal{L}}
\newcommand{\Pc}{\mathcal{P}}
\newcommand{\Uc}{\mathcal{U}}
\begin{document}
	\definecolor{rvwvcq}{rgb}{0.08235294117647059,0.396078431372549,0.7529411764705882}
	
	\begin{abstract} 
		Let $D$ be a bounded strictly pseudoconvex domain in $\mathbb{C}^n$. Assuming $bD \in C^{k+3+\alpha}$ where $k$ is a non-negative integer and $0 < \alpha \leq 1$, we show that 1) the Bergman kernel $B(\cdot, w_0) \in C^{k+ \min\{\alpha, \frac12 \} } (\overline D)$, for any $w_0 \in D$; 2) The Bergman projection on $D$ is a bounded operator from $C^{k+\beta}(\overline D)$ to $C^{k + \min \{ \all, \frac{\beta}{2} \}}(\overline D) $ for any $0 < \beta \leq 1$. Our results both improve and generalize the work of E. Ligocka. 
	\end{abstract} 
	
	\maketitle
	\tableofcontents 
	\section{Introduction} 
	The main goal of the paper is to prove the following result. 
	\begin{thm} \label{Thm::Bker_intro} 
		Let $D$ be a bounded strictly pseudoconvex domain in $\C^n$ with $C^{k+3+\all}$ boundary, where $k$ is a non-negative integer and $0 < \all \leq 1$. Let $B(z,w)$ be the Bergman kernel for $D$.  Then for every $w_0 \in D$, $B(\cdot, w_0) \in C^{k+ \min\{ \all, \yh \}}(\ov D)$. 
	\end{thm}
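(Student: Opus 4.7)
The plan is to express the Bergman kernel as the sum of an explicit Henkin--Ramirez type approximation and a correction term, then apply Hölder estimates for $\bar\partial$ on \spcd s to control the correction. For a fixed $w_0 \in D$ and a $C^{k+3+\all}$ defining function $\rho$, I would first construct a reproducing kernel $H(\cdot, w_0)$ from the Levi polynomial
$$\Phi(z, w) = -\sum_{j} \partial_j \rho(w)(z_j - w_j) - \yh \sum_{j,k} \partial_j \partial_k \rho(w)(z_j - w_j)(z_k - w_k),$$
in the form of a Cauchy--Fantappiè integral whose denominator is a power of $\Phi$. Because $H(\cdot, w_0)$ is holomorphic on $D$, smooth on $\ov D \setminus \{w_0\}$, and built explicitly from $\rho$ and its first two derivatives, a direct computation of its Hölder seminorms yields $H(\cdot, w_0) \in C^{k+\all}(\ov D)$.

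Next I would relate $H$ to $B$ via the Kerzman--Stein identity
$$B(\cdot, w_0) = H(\cdot, w_0) - \bp\bigl[H(\cdot, w_0) - \ov{H(w_0, \cdot)}\bigr],$$
where $\bp$ denotes the Bergman projection. The anti-self-adjoint combination $A(z, w_0) := H(z, w_0) - \ov{H(w_0, z)}$ enjoys the classical Kerzman--Stein cancellation: the leading singularities of $H$ and its formal adjoint on the diagonal cancel, so $A(\cdot, w_0)$ has strictly better boundary regularity than $H(\cdot, w_0)$ itself.

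Then I would apply the Hölder mapping property
$$\bp \colon C^{k+\all}(\ov D) \longrightarrow C^{k+\min\{\all, \yh\}}(\ov D),$$
which reduces via the $\bar\partial$-Neumann formalism to a Hölder-gain estimate for the canonical $\bar\partial$-solution operator on $(0,1)$-forms. Combined with the regularity of $H(\cdot, w_0)$ established above, this yields $B(\cdot, w_0) \in C^{k+\min\{\all, \yh\}}(\ov D)$, as claimed.

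The main obstacle is establishing this sharp Hölder mapping property of $\bp$ on \spcd s with only finite boundary regularity $C^{k+3+\all}$. The $\yh$-ceiling is the intrinsic gain of $\bar\partial$-estimates on such domains, while the $\all$-factor reflects the limited boundary smoothness; obtaining both simultaneously at the correct threshold requires careful tracking of how $\rho$ enters the Henkin-type homotopy kernels, together with commutator and cut-off estimates in Hölder scales near $bD$.
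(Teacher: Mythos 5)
There is a genuine gap, and it sits exactly where you locate ``the main obstacle.'' Your reduction rests on the mapping property $\bp \colon C^{k+\all}(\ov D) \to C^{k+\min\{\all,\yh\}}(\ov D)$, which you do not prove and which is in fact \emph{stronger} than what the paper establishes: Theorem \ref{Thm::Bproj_intro} only gives $\bp \colon C^{k+\all}(\ov D) \to C^{k+\frac{\all}{2}}(\ov D)$, because the bottleneck operator $\Lc$ (Proposition \ref{Prop::Bproj}(i)) loses half the H\"older exponent. If you substitute the mapping property that is actually available, your argument yields only $B(\cdot,w_0) \in C^{k+\frac{\all}{2}}(\ov D)$, which is strictly weaker than the claimed $C^{k+\min\{\all,\yh\}}(\ov D)$ for every $\all<1$. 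Asserting the sharper property of $\bp$ without proof is essentially assuming a statement at least as hard as the theorem itself; on domains of finite smoothness it does not follow from the $\dbar$-Neumann formalism, which the paper deliberately avoids. There are also smaller unaddressed points in your identity $B(\cdot,w_0)=H(\cdot,w_0)-\bp[H(\cdot,w_0)-\ov{H(w_0,\cdot)}]$: the raw Levi-polynomial kernel is holomorphic in $z$ only near the diagonal, so $\bp[H(\cdot,w_0)]=H(\cdot,w_0)$ requires the global holomorphic correction $L_0\,dV=-S_z(\dbar_z\dbar_\zeta N)$ of \re{ker_L_def}, and the reproducing property of $\ov{H(w_0,\cdot)}$ requires the Koppelman homotopy argument; moreover, for a fixed interior $w_0$ the denominator $\Phi(\cdot,w_0)$ is bounded away from zero, so the Kerzman--Stein cancellation you invoke plays no role at that stage.

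The paper's route avoids applying $\bp$ to an explicit kernel altogether. It writes $B(\cdot,w_0)=\Pc\var$ with $\var\in C^\infty_c(D)$ (Lemma \ref{Lem::ker_proj}), sets $f=\Pc\var-\var$, and uses that $f\perp H^2(D)$ and $\dbar f$ is compactly supported to derive, via the homotopy formula of Proposition \ref{Prop::hf}, the Fredholm equation $(I+\Kc)f=h$ with $h\in C^\infty(\ov D)$ (Proposition \ref{Prop::main_est}). The regularity of $B(\cdot,w_0)$ then comes entirely from the smoothing gain of $\Kc$ on $C^k(\ov D)$ (Proposition \ref{Prop::K_opt_est}), where the Kerzman--Stein cancellation is used in the operator estimate, together with invertibility of $I+\Kc$ from self-adjointness of $i\Kc$. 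To repair your argument you would either need to prove the sharp mapping property of $\bp$ directly --- which is not known at this level of boundary regularity --- or restructure along these Fredholm lines so that only the gain of $\Kc$ over $C^k$, not a H\"older bound for $\bp$, is required.
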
  
	
Earlier in her paper \cite{Li84}, E. Ligocka showed that if $\Om$ has $C^{k+4}$ boundary for non-negative integers $k$, then $B(\cdot, w_0) \in C^{k+\yh}(\ov D)$. Hence \rt{Thm::Bker_intro} is  an improvement and generalization of Ligocka's result to H\"older spaces. 
	
The study of boundary regularity properties of the Bergman projection and Bergman kernel is of fundamental importance in several complex variables, and the subject has found major applications in the theory of biholomorphic mappings and complex geometry, among many other fields. We mention here some brief history for the results on strictly pseudoconvex domains. When the boundary is $C^\infty$, Kerzman \cite{Ker72} used the theory of $\dbar$-Neumann problem to show that the Bergman kernel function $B(z,w)$ is $C^\infty \times C^\infty(\ov D \times \ov D \sm \Del_{bD})$, where $\Del_{bD}:= \{ (z,w) \in bD \times bD, z=w\}$. 
Soon after, C. Fefferman in his seminal paper \cite{Fef74} gave a description of the behavior of the Bergman kernel $(z,w) \in bD \times bD$ near its singular set $\Del_{bD}$, and as an application he proved the now classical Fefferman's mapping theorem, which states that a biholomorphic mapping $F: D_1 \to D_2$ between two bounded $C^\infty$ strictly pseudoconvex domains $D_1, D_2$ extends to a $C^\infty$ diffeomorphism $\wti F: \ov{D_1} \to \ov{D_2}$. Fefferman's proof was based on the deep properties of the Bergman kernel and Bergman metric on strictly pseudoconvex domains. The analysis however was very difficult and nearly impossible to generalize to other cases. Later on Webster \cite{Web79} and Ligocka-Bell \cite{B-L80} independently found conditions on the boundary behavior of the Bergman kernel that can imply the $C^\infty$ extension of biholomorphic mappings, and consequently they were able to significantly simplify Fefferman's proof.   
	
Phong-Stein \cite{P-S77} and Ahern-Schneider \cite{A-S79} independently proved the H\"older estimates for the Bergman projection. In both work the boundary is assumed to be $C^\infty$ and the proof is based on the work of C.Fefferman \cite{Fef74} and L. Boutet de Monvel and J. Sjöstrand \cite{B-S76}. 
Later on, Ligocka \cite{Li84} constructed a non-orthogonal projection operator with explicit kernels that ``approximates" the Bergman projection operator, and she used it to prove the H\"older estimates assuming boundary is $C^{k+4}$. Ligocka based off her construction on a similar work done by Kerzman-Stein \cite{K-S78} for the Szeg\"o projection on $C^\infty$ strictly pseudoconvex domains. 
The idea is to use the symmetry of the Levi polynomial for the defining function to get a third order cancellation, which then allows one to estimate the singular integrals (see \rp{Prop::cancel}). It is also worthwhile to mention that the method of Kerzman-Stein-Ligocka has been used in a number of subsequent works, for example in \cite{L-S12} and \cite{L-S13}. 
	For a detailed exposition of the work by Ligocka-Bell and Kerzman-Stein-Ligocka, we refer the reader to the book by M.Range \cite[Chapter VII]{Ran86}.   
	
	In this paper we shall give a variant of Ligocka's method which allows us to prove the estimates in H\"older spaces. Our method also has the advantage that the term on the right-hand side of our integral equation behaves much nicer than the one used by Ligocka, which we now explain. 
	Denote the Bergman projection on $D$ by $\Pc$. It is a standard fact that for $w_0 \in D$, one can write $B(\cdot, w_0) = \Pc \var$, where $\var = \var_{w_0} \in C^\infty_c(D)$ (see \rl{Lem::ker_proj}.)
	Ligocka showed that $\Pc \var$ satisfies an integral equation of the form
	\eq{Lig_int_eqn_intro} 
	(I + \Kc) \Pc \var = \Lc^\ast \var.   
	\eeq  
	Here $\Lc$ is a non-orthogonal projection operator mapping $L^2(\Om)$ into $H^2(\Om)$, the $L^2$ Bergman space, $\Lc^\ast$ is the adjoint operator of $\Lc$, and $\Kc:= \Lc^\ast - \Lc$. It was proved in \cite{Li84} that if the boundary is $C^{k+4}$, then $\Kc$ is a compact operator mapping $C^k(\ov D)$ into $C^{k+\yh}(\ov D)$, and $\Lc, \Lc^\ast$ map $C^{k+1}(\ov D)$ (in fact only need derivatives of order $k$ being Lipschitz continuous) into $C^{k+\yh}(\ov D)$. Hence in particular $\Lc^\ast \var \in C^{k+\yh}(\ov D)$. Applying Fredholm theory to the integral equation \re{Lig_int_eqn_intro} then shows that $\Pc \var \in C^{k+\yh}(\ov D)$. 
	
 For our proof we shall use the same operators $\Lc, \Lc^\ast, \Kc$, but instead of considering the integral equation of $\Pc \var$, we show that the following integral equation holds for the function $\Pc \var - \var$ 
	\eq{int_eqn_intro} 
	(I + \Kc) (\Pc \var - \var) = R(\Pc \var - \var), 
	\eeq 
	where $R$ is some operator that maps $P \var - \var$ to a $C^\infty(\ov D)$ function, assuming boundary is only $C^3$. 
	This is in contrast to the right-hand side of \re{Lig_int_eqn_intro}, where the regularity of $\Lc^\ast \var$ depends on the regularity of the boundary and the estimate is much more complicated.   
	
	Using \re{int_eqn_intro}, \rt{Thm::Bker_intro} is then an easy consequence of the following compactness result and Fredholm theory. 
	\begin{prop} \label{Prop::K_bdd_intro}  
		Let $D$ be a bounded strictly pseudoconvex domain in $\C^n$ with $C^{k+3+\all}$ boundary, where $k$ is a non-negative integer and $0 < \all \leq 1$. Then $\Kc$ is a bounded operator from $C^k (\ov D)$ to $C^{k+ \min \{\all, \yh \} }(\ov D)$. 
	\end{prop}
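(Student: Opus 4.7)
The plan is to write $\Kc = \Lc^\ast - \Lc$ as an integral operator $\Kc f(z) = \int_D K(z,w) f(w)\,dV(w)$ and exploit the Kerzman-Stein-Ligocka cancellation. Because $\Lc$ is built from a kernel of the form $\ell(z,w) \sim c_n/\chi(z,w)^n$ with $\chi$ the Levi polynomial of the defining function $\rho$, the combination $K(z,w) = \overline{\ell(w,z)} - \ell(z,w)$ annihilates the symmetric quadratic part of $\chi$ (cf.\ \rp{Prop::cancel}), leaving a kernel bounded by $|\Phi(z,w)|^{-n}$ times a factor inherited from $\rho$ that is only $\alpha$-H\"older. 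This makes $K$ genuinely less singular than either $\ell$ or $\overline{\ell}$ separately, and is the mechanism by which $\Kc$ gains $\min\{\all, \yh\}$ derivatives.

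Handling the case $k = 0$ first, the $L^\infty$ bound follows from the standard non-isotropic integrability estimate $\sup_z \int_D |K(z,w)|\,dV(w) < \infty$. To obtain the H\"older seminorm bound, given $z_1, z_2 \in \ov D$ with $\del = |z_1 - z_2|$ small, I split
\[
|\Kc f(z_1) - \Kc f(z_2)| \leq \|f\|_\infty \Bigl( \int_{B(z_1, C\del) \cap D} (|K(z_1,w)| + |K(z_2,w)|)\,dV + \int_{D \sm B(z_1, C\del)} |K(z_1,w) - K(z_2,w)|\,dV \Bigr).
\]
The near piece is controlled by the size bound on $K$, while the far piece is bounded via the mean value theorem and estimates on $\nabla_z K$. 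The two exponents in $\min\{\all, \yh\}$ have distinct origins: the $\yh$ is the classical tangential gain from the anisotropic geometry of a strictly pseudoconvex boundary (shorter integration ranges in the $n-1$ complex-tangential directions), while the $\all$ tracks the H\"older modulus of $\rho$ that enters through the coefficients of $\chi$ after cancellation and caps the gain whenever $\all < \yh$.

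For $k \geq 1$, I differentiate under the integral, $D_z^\bee \Kc f(z) = \int_D D_z^\bee K(z,w)\,f(w)\,dV(w)$ for $|\bee| \leq k$, and integrate by parts to redistribute derivatives between $K$ and $f$. Since differentiating $\chi^{-n}$ normally increases the order of singularity, one decomposes $D_z^\bee$ into complex-tangential and normal parts relative to the level sets of $\rho$, using the Levi structure to trade normal derivatives of $K$ for tangential derivatives of $f$. This reduces matters to $k = 0$ type estimates applied to derivatives of $f$ of order at most $k$ against a kernel of the same qualitative form. The main obstacle will be the bookkeeping in this integration-by-parts step: verifying that $bD \in C^{k+3+\all}$ is exactly the right regularity (the construction of $\chi$ consumes three derivatives of $\rho$, differentiating the kernel $k$ times costs $k$ more, and the residual $\all$ supplies the H\"older factor), and that both the tangential $\yh$ gain and the $\all$-H\"older factor survive simultaneously in the final kernel, producing the advertised exponent $\min\{\all, \yh\}$ at each level $k$.
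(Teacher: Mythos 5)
Your outline captures the paper's core mechanism: the Kerzman--Stein--Ligocka cancellation $\Phi(z,\zeta)-\ov{\Phi(\zeta,z)}=O(|\zeta-z|^3)$ is exactly what the paper uses (\rp{Prop::cancel} together with \rl{Lem::Phi_grad_diff}), the anisotropic integral estimates supply the $\yh$, and for $k\geq 1$ the paper does precisely what you propose, namely Ahern--Schneider integration by parts in $\zeta$ to push derivatives off the kernel and onto $f$ (\rl{Lem::ibp}). The one structural difference is that you estimate the H\"older seminorm directly by a near/far splitting with the mean value theorem on the far piece, whereas the paper bounds $|D_z^{k+1}\Kc f(z)|\lesssim \del(z)^{-1+\min\{\all,\yh\}}$ and invokes the Hardy--Littlewood lemma (\rl{Lem::H-L}); these are interchangeable in principle.

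There is, however, a genuine gap in how you obtain the $\all$ part of the exponent. Both your mean-value-theorem step on the far piece and the paper's $(k+1)$-st derivative require differentiating the $z$-dependent coefficients of the kernel one more time than the boundary regularity classically allows: the term $\eta(z)+O''(|z-\zeta|)$ involves $D^3_z\rho$, so its $(k+1)$-st $z$-derivative (or its gradient, for $k=0$) involves $D^{k+4}\rho$, which does not exist for a $C^{k+3+\all}$ defining function. Saying that ``the $\all$ tracks the H\"older modulus of $\rho$'' does not by itself produce the estimate; you need a device that converts that H\"older modulus into a quantitative bound on one extra derivative. The paper's fix is the regularized defining function of \rp{Prop::reg_def_fcn} (a double Whitney extension), which is $C^\infty$ off $bD$ and satisfies $|D^{j}\wti\rho(z)|\lesssim 1+\del(z)^{k+3+\all-j}$; this yields $|D^{k+4}\rho(z)|\lesssim\del(z)^{-1+\all}$ and hence the $\del(z)^{-1+\all}$ bound that caps the gain at $\all$ when $\all<\yh$. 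Without this (or an equivalent replacement of the mean value theorem by a H\"older-difference estimate for exactly those top-order coefficient terms), your far-piece estimate cannot be carried out. A secondary omission: the operator $\Lc$ in this paper is not given purely by the explicit kernel $c_n/\Phi^{n+1}$; it contains a correction $L_0=-S_z(\dbar_z\dbar_\zeta N)$ built from H\"ormander's solution operator, and the contribution of $\ov{L_0(\zeta,z)}-L_0(z,\zeta)$ to $\Kc$ must be estimated separately (the paper shows it lands in $C^{k+\all}(\ov D)$ using \rp{Prop::Hormander} and again the regularized defining function).
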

We remark that \rp{Prop::K_bdd_intro} is the main estimate of the paper and takes up the majority of the proof. 
	
Using \rp{Prop::K_bdd_intro} we can also prove the following theorem for the Bergman projection. Similar 
 result has been obtained by Ligocka under the assumption that the boundary is $C^{k+4}$. 
	\begin{thm} \label{Thm::Bproj_intro} 
		Let $D$ be a bounded strictly pseudoconvex domain in $\C^n$ with $C^{k+3+\all}$ boundary, where $k$ is a non-negative integer and $0 < \all \leq 1$. For $0 < \beta \leq 1$, the Bergman projection $\Pc$ for the domain $D$ defines a bounded operator from $C^{k+\beta}(\ov D)$ to $C^{k+ \min \{\all, \frac{\beta}{2} \} } (\ov D)$. 
	\end{thm}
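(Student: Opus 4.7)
The plan is to combine Ligocka's integral equation
\[
(I + \Kc)\Pc\var = \Lc^\ast\var
\]
with \rp{Prop::K_bdd_intro} and the Fredholm alternative. Beyond the ingredients already recorded in the excerpt, two things are needed: a H\"older refinement of the mapping property of $\Lc^\ast$, and an injectivity check for $I+\Kc$.

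First, I would establish that $\Lc^\ast: C^{k+\all}(\ov D) \to C^{k+\all/2}(\ov D)$ is bounded. Ligocka's integer-order estimate $\Lc^\ast: C^{k+1}(\ov D) \to C^{k+\yh}(\ov D)$ arises from explicit analysis of the Levi-polynomial kernel of $\Lc^\ast$ together with the symmetry/cancellation furnished by \rp{Prop::cancel}. The same kernel analysis, applied to a fractional H\"older difference of $\var$ in place of a first-order derivative, should yield the analogous gain of $\all/2$; the boundary regularity $bD \in C^{k+3+\all}$ provides exactly the derivatives needed for the Levi-polynomial expansion to be carried out at this order.

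Next, by \rp{Prop::K_bdd_intro}, $\Kc: C^k(\ov D) \to C^{k+\min\{\all,\yh\}}(\ov D)$ is bounded. Fix any $\gm \in (0,\min\{\all,\yh\})$; then $\Kc$ is compact on $C^{k+\gm}(\ov D)$ by Arzel\`a--Ascoli, so $I + \Kc$ is Fredholm of index zero there. To verify injectivity, suppose $f \in L^2(D)$ satisfies $(I + \Kc)f = 0$. Applying $\Lc$ and using $\Lc^2 = \Lc$ gives $\Lc\Lc^\ast f = 0$, whence $\|\Lc^\ast f\|_{L^2}^2 = \langle \Lc\Lc^\ast f, f\rangle = 0$ and so $\Lc^\ast f = 0$. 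The original equation then forces $f = \Lc f \in H^2$, while $\Lc^\ast f = 0$ places $f \in (H^2)^\perp$, so $f = 0$. Therefore $(I+\Kc)^{-1}$ is bounded on $C^{k+\gm}(\ov D)$, and
\[
\Pc\var = (I+\Kc)^{-1}\Lc^\ast\var \in C^{k+\gm}(\ov D) \subset C^k(\ov D).
\]

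Finally, bootstrap: substituting $\Pc\var \in C^k(\ov D)$ into $\Pc\var = \Lc^\ast\var - \Kc\Pc\var$, \rp{Prop::K_bdd_intro} gives $\Kc\Pc\var \in C^{k+\min\{\all,\yh\}}(\ov D) \subset C^{k+\all/2}(\ov D)$, while the first step gives $\Lc^\ast\var \in C^{k+\all/2}(\ov D)$, so $\Pc\var \in C^{k+\all/2}(\ov D)$ with the claimed norm estimate. The main obstacle is the first step---extending Ligocka's integer-order estimate for $\Lc^\ast$ to fractional H\"older exponents while respecting the weakened boundary hypothesis; the remainder is a clean Fredholm-plus-bootstrap argument built on \rp{Prop::K_bdd_intro}.
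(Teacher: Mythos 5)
Your overall architecture (the integral equation $(I+\Kc)\Pc\var=\Lc^\ast\var$, invertibility of $I+\Kc$ via Fredholm theory, then a bootstrap through \rp{Prop::K_bdd_intro}) matches the paper's, and your injectivity check --- applying $\Lc$ to $(I+\Kc)f=0$ and using $\Lc^2=\Lc$ to get $\Lc\Lc^\ast f=0$, hence $\Lc^\ast f=0$ and $f\in H^2\cap (H^2)^\perp$ --- is a correct alternative to the paper's argument, which instead notes that $i\Kc$ is self-adjoint and so cannot have $-i$ as an eigenvalue. The genuine gap is the step you yourself flag as the main obstacle: the boundedness of $\Lc^\ast\colon C^{k+\all}(\ov D)\to C^{k+\all/2}(\ov D)$. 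You propose to obtain it by a direct kernel analysis of $\Lc^\ast$ ``together with the symmetry/cancellation furnished by \rp{Prop::cancel},'' but that cancellation compares $\Phi(z,\zeta)$ with $\ov{\Phi(\zeta,z)}$; it is precisely the mechanism behind the estimate of the \emph{difference} kernel $K(z,\zeta)=\ov{L(\zeta,z)}-L(z,\zeta)$ in \rp{Prop::K_bdd_intro}, and it gives you nothing for the kernel $\ov{L(\zeta,z)}$ taken alone. Nor does $\Lc^\ast$ have a reproducing property to subtract against: $\int_D \ov{L(\zeta,z)}\,dV(\zeta)$ is not identically $1$ (only $\Lc$ reproduces constants, since $1\in H^2$), so ``applying the kernel analysis to a fractional H\"older difference of $\var$'' has no concrete implementation as stated.

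The paper's resolution is to never estimate $\Lc^\ast$ head-on: it writes $\Lc^\ast=\Kc+\Lc$, quotes \rp{Prop::K_bdd_intro} for the $\Kc$ summand (which already lands in $C^{k+\min\{\all,\yh\}}\subseteq C^{k+\all/2}$ for $f\in C^{k}(\ov D)$), and proves $\Lc\colon C^{k+\all}(\ov D)\to C^{k+\all/2}(\ov D)$ directly using the genuine reproducing identity $\int_D L(z,\zeta)\,dV(\zeta)\equiv 1$, which lets one insert $f(z)-f(\zeta)$ into the integrand, gain the factor $|z-\zeta|^{\all}$, and then invoke \rl{Lem::int_est} together with the Hardy--Littlewood lemma; an approximation step (\rl{Lem::approx}) is needed first to justify differentiating under the integral when $f$ is only H\"older continuous, and for $k\geq 1$ the integration-by-parts machinery of \rl{Lem::ibp} is used as well. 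This decomposition is the concrete idea your proposal is missing; without it the first step of your plan does not go through.
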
 
 In the special case $\all =1$, we recover Ligocka's result. 
Note that \rt{Thm::Bker_intro} can also be obtained as a consequence of \rt{Thm::Bproj_intro}, by the fact that $B =P \var$ and setting $\beta=1$ in \rt{Thm::Bproj_intro}. However we shall give independent proofs of the two theorems based on \rp{Prop::K_bdd_intro}.

The paper is organized as follows. In Section 2, we prove a simple estimate for H\"ormander's $\dbar$ solution operator on pseudoconvex domains. We also prove a refined version of the regularized defining function introduced in \cite{Gong19}, which plays an important role in the proof of \rp{Prop::K_bdd_intro}. In Section 3 we follow Ligocka's idea to construct the operators $\Lc, \Lc^\ast, \Kc$,  using the regularized defining function from Section 2. We then prove various estimates for the kernels of $\Lc, \Lc^\ast, \Kc$. We note that in our proof (\rp{Prop::Kopt_L2_bdd} and the remark after) that $\Lc$ defines a bounded projection operator from $L^2(D)$ to $H^2(D)$, only $C^3$ boundary regularity is needed. 
	
In Section 4 we prove \rp{Prop::K_bdd_intro} and \rt{Thm::Bker_intro}. The proof of \rp{Prop::K_bdd_intro} is splitted into two parts. In the first part, we prove the case for $k=0$, i.e. assuming $bD \in C^{3+\all}$, $0 < \all \leq 1$, we show that $\Kc$ maps  $L^\infty(D)$ boundedly into $C^{\min \{\all,  \yh \} } (\ov D)$. In the second part, we apply the integration by parts techniques from \cite{A-S79} to prove the case for $k \geq 1$.  We next turn to the proof of \rt{Thm::Bker_intro}. First we construct the integral equation \re{int_eqn_intro} using Koppleman's homotopy formula and show that the right-hand side defines a $C^\infty(\ov D)$ function. \rt{Thm::Bker_intro} then follows easily from \rp{Prop::K_bdd_intro} and standard Fredholm theory. 
In Section 5 we prove \rt{Thm::Bproj_intro}. To this end we show that $\Lc$ is a bounded operator from $C^{k+\beta}(\ov D)$ to $C^{k+ \frac{\beta}{2}}(\ov D)$, $0< \beta \leq 1$, assuming boundary is $C^3$.
	
	We now fix some notations used in the paper. The $L^2$ Bergman space on a domain $D$ is denoted by $H^2(D)$. The Bergman projection and Bergman kernel is denoted by $\Pc$ and $B$, respectively.  
We denote by $C^r(\ov D)$ the H\"older space of exponent $r$ on $D$, and $C^\infty_c(D)$ the space of $C^\infty$ functions with compact support in $D$. For simplicity we write $|f|_r := \| f \|_{C^r(\ov D)}$ when the domain $D$ is clear from context.   
	We write $x \lesssim y$ to mean that $x \leq Cy$ for some constant $C$ independent of $x$ and $y$. By $D^l$ we mean a differential operator of order $l$: $D^l_z g(z)= \pa_{z_i}^{\all_i} \pa_{\ov z_j}^{\beta_j} g(z)$, $\sum_i \all_i + \sum_j \beta_j = l$. 

\begin{ack} 
  The author would like to thank the anonymous referee for many valuable suggestions that improve the exposition of the paper. 
\end{ack}
	\section{Preliminaries}  
	\begin{prop} \label{Prop::Hormander}
		Let $D,D'$ be bounded pseudoconvex domains in $\C^n$ such that $D' \subset \subset D$, and let $l \geq 0$. Suppose $\var$ is a $\dbar$-closed $(0,1)$ form in $D$, with coefficients in $W^l(D)$. Let $u = S \var$, where $S$ is H\"ormander's $L^2$ solution operator which solves $\dbar$ on $D$. Then $u \in W^{l+1}(D')$, and 
		\[
		\| u \|_{W^{l+1}(D')} \leq C (\del/2)^{-l-1} \| \var \|_{W^l(D)}, \quad \del:= \dist(D', \pa D),  
		\] 
		where $C$ is an absolute constant depending only on the domain $D$, 
	\end{prop}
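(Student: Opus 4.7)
The proposition is essentially interior elliptic regularity for the Laplacian layered on top of H\"ormander's $L^2$ existence theorem for $\dbar$. I would break the argument into three steps: an $L^2$ base estimate, reduction to the Laplacian, and an iterated cutoff argument tracking the $\delta$-scaling.

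For the base estimate, H\"ormander's classical $L^2$ theorem on bounded pseudoconvex domains produces the canonical solution $u = S\var$ with $\|u\|_{L^2(D)} \ls \|\var\|_{L^2(D)}$, the implicit constant depending only on the diameter of $D$. The reduction step exploits that $u$ is a $(0,0)$-form, so $\dbar^\ast u = 0$ trivially, and the Kohn identity becomes $\Box u = \dbar^\ast \dbar u = \dbar^\ast \var$. Writing $\var = \sum_j \var_j\, d\ov{z_j}$ and unpacking the Euclidean formal adjoint gives the distributional identity
\[
-\tfrac14 \Del u \;=\; -\sum_j \pa_{z_j} \var_j
\]
on $D$, so that $\Del u \in W^{l-1}(D)$ with $\|\Del u\|_{W^{l-1}(D)} \ls \|\var\|_{W^l(D)}$; when $l=0$ this is interpreted by the usual $H^{-1}$ duality, pairing the derivative against a test function.

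To promote this to the interior $W^{l+1}(D')$ bound with the correct $\delta$-scaling, I would introduce $l+2$ nested domains $D' = D_0 \subset\subset D_1 \subset\subset \cdots \subset\subset D_{l+1} \subseteq D$ with $\dist(D_j,\pa D_{j+1}) \geq \del/(2(l+1))$, together with cutoffs $\chi_j \in C_c^\infty(D_{j+1})$ satisfying $\chi_j \equiv 1$ on $D_j$ and $|D^m \chi_j| \ls ((l+1)/\del)^m$. Applying the standard one-step interior elliptic inequality
\[
\|u\|_{W^{j+1}(D_j)} \;\ls\; \f{l+1}{\del}\bigl( \|\Del u\|_{W^{j-1}(D_{j+1})} + \|u\|_{W^j(D_{j+1})} \bigr)
\]
(obtained in the usual way by writing $\Del(\chi_j u) = \chi_j \Del u + \text{lower order in } u$ and using ellipticity of $\Del$ on $\C^n$) and iterating from $j=0$ up to $j=l$ telescopes to
\[
\|u\|_{W^{l+1}(D')} \;\ls\; (\del/2)^{-(l+1)} \bigl( \|\Del u\|_{W^{l-1}(D)} + \|u\|_{L^2(D)} \bigr).
\]
Combining with the $L^2$ estimate from H\"ormander and the bound $\|\Del u\|_{W^{l-1}(D)} \ls \|\var\|_{W^l(D)}$ from the reduction step gives the proposition.

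The only genuinely delicate point is bookkeeping: each cutoff contributes a factor $(l+1)/\del$, and the $l+1$ iterations must telescope cleanly to $(\del/2)^{-(l+1)}$ so that the combinatorial dependence on $l$ is absorbed into the constant $C=C(D)$ rather than surviving in the $\del$-factor. This is routine but is where one has to be slightly careful.
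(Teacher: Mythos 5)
Your reduction to the Laplacian is a genuinely different route from the paper's, and it is sound at the qualitative level: from $\pa_{\ov{z_j}} u = \var_j$ one gets $\tfrac14 \Del u = \sum_j \pa_{z_j}\var_j \in W^{l-1}(D)$ (no appeal to Kohn's identity is needed; this is just $\pa_{z_j}$ applied to the equation), and interior elliptic regularity then yields $u \in W^{l+1}_{\loc}(D)$. The paper instead stays entirely on the level of $\dbar$: with a \emph{single} cutoff $\chi$ equal to $1$ on $D'$, it uses the identity $\|\pa_{z_i} v\|_{L^2(\C^n)} = \|\pa_{\ov{z_i}} v\|_{L^2(\C^n)}$ for compactly supported $v$ with $\dbar v \in L^2$ (H\"ormander, Lemma 4.2.4), iterates it to get $\|D^{l+1}(\chi u)\|_{L^2} = \|\dbar^{l+1}(\chi u)\|_{L^2}$, and then expands $\dbar^{l+1}(\chi u)$ by Leibniz, substituting $\dbar u = \var$ in every term. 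This gives exactly $2^{l+1}$ terms, each bounded by $\del^{-(l+1)}\|\var\|_{W^l(D)}$, hence the stated constant $C(\del/2)^{-l-1}$ with $C$ independent of $l$ and no nested exhaustion.

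The gap in your version is exactly the bookkeeping you dismiss as routine. First, the one-step inequality with constant only $(l+1)/\del$ is not justified: writing $\Del(\chi_j u) = \chi_j\Del u + 2\na\chi_j\cdot\na u + (\Del\chi_j)u$ and measuring the right-hand side in $W^{j-1}$, the commutator terms carry up to $j$ derivatives of $\chi_j$, i.e.\ factors as large as $((l+1)/\del)^{j}$, not a single power. Second, even granting one power per step, iterating $l+1$ times across domains separated by gaps of size $\del/(l+1)$ produces $(C(l+1)/\del)^{l+1}$, and the factor $(l+1)^{l+1}$ cannot be absorbed into an $l$-independent $C(D)$, since $((l+1)/2)^{l+1}\to\infty$; so your scheme proves the estimate only with a constant of the form $C(l)\,\del^{-(l+1)}$. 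For the way the proposition is actually used later (to conclude $C^\infty$ smoothness in $z$ of $L_0(z,\zeta)$ for a fixed pair of domains), this weaker, $l$-dependent form would suffice; but it is not the statement as written, and the assertion that the telescoping "cleanly" yields $(\del/2)^{-(l+1)}$ is the one step of your outline that is false rather than merely left unverified. There is also a minor point at $l=0$, where $\Del u$ only lies in $H^{-1}$ and the localization of the dual norm under cutoffs needs an argument; the paper's route avoids negative-order spaces altogether.
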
 
	\begin{proof} 
		By H\"ormander's $L^2$ estimate \cite{Hor65}, we have $\dbar u = \var$ and 
		\eq{L2_est}
		\| u \|_{L^2 (D)} \leq C_0 \| \var \|_{L^2(D)}, 
		\eeq 
		where $C_0$ is a constant which depends only on the the diameter of $D$. 
		Let $\chi \in C^\infty_c(D)$ be such that $\chi \equiv 1$ on $D'$. Further, $\chi$ satisfies the estimate $|D^\gm \chi| \leq \del^{-|\gm|}$, where $\del:= \dist(D',D)$. 
		We use the following fact: If $v \in L^2(\C^n)$ has compact support and $\dbar v \in L^2(\C^n)$, then  
	    \eq{der_L2_equiv} 
		\| \pa_{z_i} v \|_{L^2(\C^n)} =  
		\| \pa_{\ov z_i} v \|_{L^2(\C^n)} . 
		\eeq  
		This can be proved through a simple integration by parts and approximation argument. (see \cite[Lemma 4.2.4]{Hor90}). In what follows we let $D^l$ to denote a differential operator of the form $\prod_{i,j=1}^n \pa_{z_i}^{\all_i} \pa_{\ov z_j}^{\beta_j}$, where $\sum_{i,j=1}^n |\all_i| + |\beta_j| = l$, and we use $\dbar^l$ to denote $\prod_{j=1}^n \pa_{\ov z_j}^{\beta_j}$, where $\sum \beta_j = l$. Applying \re{der_L2_equiv} repeatedly then gives 
		\eq{der_L2_equiv_2}
		\|D^{l+1} v \|_{L^2(\C^n)} 
		= |\dbar^{l+1} v \|_{L^2(\C^n)}
		\eeq 
		for any $v \in L^2(\C^n)$ with compact support and such that $\dbar^{l+1}v \in L^2(\C^n)$. 
		Applying \re{der_L2_equiv_2} with $v = \chi u$, we get 
      \begin{equation} \label{L2_est_prod} 
		\begin{aligned} 
		\| D^{l+1} (\chi u) \|_{L^2(D)} 
		&= \| \dbar^{l+1}(\chi u) \|_{L^2(D)} 
		\\ 
		& \leq \| (\dbar^{l+1} \chi) u \|_{L^2(D)} 
		+ \sum_{1 \leq s \leq l+1} \| (\dbar^{l+1-s} \chi) (\dbar^s u) \|_{L^2(D)}. 
		\end{aligned}
      \end{equation} 
		By \re{L2_est} and estimates for the derivatives of $\chi$, the first integral is bounded by $C_0 \del^{-(l+1)} \| \var \|_{L^2(D)}$. For each integral in the sum, we have for $1 \leq s \leq l+1$ 
		\begin{align*}
		\| (\dbar^{l+1-s} \chi) (\dbar^s u) \|_{L^2(D)}
		&= \| (\dbar^{l+1-s} \chi) (\dbar^{s-1} \dbar u)  \|_{L^2(D)}
		\\ &\leq \del^{-(l+1-s)} \| \var \|_{W^{s-1}(D)}
		\leq \del^{-l} \| \var \|_{W^l(D)}.
		\end{align*} 
		Now, there are in total $  \sum_{k=0}^{l+1} \binom{l+1}{k} = 2^{l+1}$ terms on the right-hand side of \re{L2_est_prod}. Thus by combining the estimates we obtain 
		\[
		\| D^{l+1}(\chi u) \|_{L^2(D)} \leq C_0 2^{l+1} \del^{-(l+1)} \| \var \|_{W^l(D)} = C_0 (\del/2)^{-(l+1)} \| \var \|_{W^l(D)}. 
		\]  
		Since $\chi \equiv 1$ on $D'$, we have 
		\[
		\| D^{l+1} u \|_{L^2(D')} 
		\leq  \| D^{l+1}  (\chi u) \|_{L^2(D)} 
		\leq C_0 (\del/2)^{-(l+1)} \| \var \|_{W^l(D)}. \qedhere
		\] 
	\end{proof}                                     
	
We now show the existence of a defining function that is smooth off the boundary and whose derivatives blow up in a controlled way. 
	
	\begin{prop} \label{Prop::reg_def_fcn} 
Let $D$ be a bounded domain in $\R^N$ with $C^r$ boundary, $r \geq 3$, and let $\rho$ be a defining function of $D$ of the class $C^r$, i.e. there exists a 
$\Uc$ such that $D \subset \subset \Uc$, $\na \rho \neq 0$ on $bD$ and $ D = \{ x \in \Uc: \rho(D) <0 \} $. We denote $|\rho|_r := |\rho|_{C^r(\Uc)}$, where $|\cdot|_{C^r(\Uc)}$ denotes the H\"older-$r$ norm on $\Uc$. Then there exists a defining function $\wti \rho$ of $D$ such that 
		\begin{enumerate}[(a)]
			\item  
			$\wti \rho \in C^r(\R^N) \cap C^\infty( \R^n \sm bD)$. 
			\item
			There exists some $\del_0 > 0$ such that for any $x\notin bD$ and $0< \del(x) := \dist (x, bD) <\del_0$, 
			\[
			|D^j \wti \rho (x)| \lesssim C_j | \rho|_{r}( 1+ \del(x)^{r-j}), \quad j = 0,1,2, \dots, \quad \del(x) := \dist(x,bD).
			\] 
			\item   
			There exists a constant $C$ depending only on the domain $D$ and $|\rho|_3$, and a $\del_1>0$ such that for all $x \in \R^n$ with $\del(x) < \del_1$ the following estimate hold
			\[
			| \hht D^2 \wti \rho (x) - \hht D^2 \rho (x_\ast) | \lesssim C |x - x_\ast|,  \quad |x_\ast-x| := \dist(x, bD). 
			\]
			Here we use $\hht D^2 \rho$ to denote derivatives of $\rho$ of order $2$ and less. 
		\end{enumerate}
	\end{prop}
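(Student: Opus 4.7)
The plan is to construct $\widetilde\rho$ by a variable-scale mollification, following the approach in \cite{Gong19}. First extend $\rho$ from a neighborhood of $bD$ to a globally defined $C^r$ function on $\R^N$ with comparable norm via a Whitney or Seeley extension. Let $\chi \in C^\infty_c(B(0,1))$ be a radial mollifier with $\int \chi = 1$. Using a Whitney dyadic decomposition of $\R^N \sm bD$ together with a subordinate partition of unity, build a function $\eps \in C^\infty(\R^N \sm bD)$ which extends continuously by $0$ to $bD$ and satisfies $\eps(x) \sim \del(x)$ and $|D^k \eps(x)| \ls \del(x)^{1-k}$ for all $k \geq 0$. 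Then define
\[
\widetilde\rho(x) := \int_{\R^N} \rho(x - \eps(x)\, y)\, \chi(y)\, dy, \qquad x \in \R^N.
\]
On $bD$ we have $\eps(x)=0$, so $\widetilde\rho(x) = \rho(x) = 0$; off $bD$ the integrand is smooth in $x$ and $\widetilde\rho \in C^\infty(\R^N \sm bD)$. Moreover $\widetilde\rho$ is a defining function: a short Taylor argument shows $\nabla\widetilde\rho(x) \to \nabla\rho(x_\ast)$ as $x \to x_\ast \in bD$, and $\nabla\rho \neq 0$ on $bD$.

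For part (b), apply Fa\`a di Bruno's formula to $D^j \widetilde\rho(x)$: each derivative either falls on $\rho$ or on $\eps$. When $j \leq r$, all derivatives can be placed on $\rho$, producing a bound by $C_j |\rho|_r$ after using $|D^k \eps| \ls \del^{1-k}$. When $j > r$, rewrite the definition after the change of variables $z = x - \eps(x) y$,
\[
\widetilde\rho(x) = \int \rho(z)\, \eps(x)^{-N} \chi\!\left(\tfrac{x-z}{\eps(x)}\right) dz,
\]
integrate by parts to shift $j - r$ derivatives from $\rho$ onto the kernel $\eps(x)^{-N}\chi((x-z)/\eps(x))$ (each extra derivative of the kernel costs a factor $\eps^{-1} \sim \del^{-1}$), and estimate the remaining $r$ derivatives of $\rho$ by $|\rho|_r$. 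This yields $|D^j \widetilde\rho(x)| \ls C_j |\rho|_r \del(x)^{r-j}$, covering (b); the $C^r$ regularity up to $bD$ in part (a) then follows since the $j$-th derivatives ($j \leq r$) are uniformly bounded and converge to $D^j \rho|_{bD}$ by a Taylor expansion argument.

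For part (c), since $\rho \in C^3$ the collection $\hht D^2 \rho$ is Lipschitz with constant $\ls |\rho|_3$. Writing $\hht D^2 \widetilde\rho(x)$ as an integral against $\chi$ plus correction terms from the chain rule containing at most first derivatives of $\eps$ (which are bounded) times derivatives of $\rho$ of order $\leq 2$, note that for $y \in \supp \chi$,
\[
|x - \eps(x) y - x_\ast| \leq |x - x_\ast| + \eps(x) \ls |x - x_\ast|.
\]
Applying the Lipschitz property of $\hht D^2 \rho$ and of $D\rho$ at $x_\ast$ bounds each term by $C |\rho|_3 |x - x_\ast|$, giving~(c). The principal technical obstacle is the bookkeeping in (b) for $j > r$: one must carefully transfer exactly the excess $j - r$ derivatives off $\rho$ while tracking the combinatorics of Fa\`a di Bruno and the derivative estimates of $\eps$, so that the correct blow-up rate $\del^{r-j}$ (and not a worse power) emerges uniformly in $j$.
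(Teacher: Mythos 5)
Your construction has a genuine gap that breaks all three parts of the proposition. Mollifying at scale $\eps(x)\sim\del(x)$ against a nonnegative radial $\chi$ reproduces only polynomials of degree $\le 1$ exactly: the odd moments of $\chi$ vanish, but $\int y_iy_j\,\chi(y)\,dy = c\,\delta_{ij}$ with $c>0$. Already for $\rho(x)=x_1+x_1^2$ your formula gives exactly $\wti\rho(x)=\rho(x)+c\,\eps(x)^2$, and $D^2(\eps^2)=2\eps\,D^2\eps+2\,D\eps\otimes D\eps$ does not tend to $0$ as $x\to bD$ (indeed $D\eps\otimes D\eps$ need not even have a limit, since $\eps$ is only a regularized distance with $|D^k\eps|\ls\del^{1-k}$). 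Hence $D^2\wti\rho$ does not converge to $D^2\rho(x_\ast)$, so (a) fails for $r\ge 2$ and (c) fails outright; moreover $D^j(\eps^2)\sim\del^{2-j}$ for $j\ge 3$, which is far worse than the required $\del^{r-j}$, so (b) fails as well. The same defect invalidates your sketch of (c): the chain rule applied to $D^2[\rho(x-\eps(x)y)]$ produces terms like $D\rho\cdot y\cdot D^2\eps$ with $|D^2\eps|\ls\del^{-1}$, which are not ``bounded correction terms'' and can only be tamed by moment cancellations that your $\chi$ does not possess beyond first order. The Fa\`a di Bruno bookkeeping you flag as the ``principal technical obstacle'' is therefore not merely tedious; it genuinely does not close with this kernel.

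The fix is to make the averaging reproduce polynomials of degree $\le r$: either choose $\chi$ with $\int y^\alpha\chi(y)\,dy=0$ for $1\le|\alpha|\le r$ (such a $\chi$ must change sign), or replace $\rho(x-\eps(x)y)$ by the degree-$r$ Taylor polynomial of $\rho$ based at nearby boundary points, glued with a partition of unity subordinate to the Whitney decomposition. The latter is precisely the Whitney extension operator $E_r$, and the paper's proof is exactly this: it invokes \cite[Lemma 3.7]{Gong19} to get a defining function in $C^r(\R^N)\cap C^\infty((\ov D)^c)$ with the stated blow-up rates, and then applies the Whitney extension for the complementary domain $(\ov D)^c$ to gain smoothness on the interior side as well. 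If you repair your kernel as above, your single-step construction would give smoothness on both sides of $bD$ at once, which is a mild simplification over the paper's two-step composition $E_r'E_r\rho$ --- but as written the argument does not prove the proposition.
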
  
	We call $\wti \rho$ a \emph{regularized defining function} of the domain $D$. 
	
	\begin{proof} 
		We will use the argument from \cite{Gong19}. Let $E_r$ be the Whitney extension operator for the domain $D$. By \cite[Lemma 3.7]{Gong19}, $E_r \rho$ is a defining function of $D$ (so that $-E_r \rho$ is a defining function of the domain $(\ov D)^c$, $E_r \rho \in C^r(\R^N) \cap C^\infty ((\ov D)^c)$ and 
		\[
		|D^j E_r \rho(x)| \lesssim C_j |\rho|_r (1+ \del(x)^{r-j}), \quad  j= 0,1,2, \dots. \quad x \in \R^n \sm \ov D . 
		\] 
		Furthermore, for each $x \in \R^n \sm \ov D$ with $0<\del(x)<1$, there exists some constant $C$ depending only on $D$ and $|\rho|_3$ such that $ | \hht{D}^2 (E\rho) (x) - \hht{D}^2 \rho (x_\ast) | \leq C |x -x_\ast| $, where $x_\ast:=  \dist(x, bD)$. 
		Let $E_r'$ be the Whitney extension operator for the domain $(\ov{D})^c$. Then by the same reasoning $\wti \rho := E_r' E_r \rho$ is a defining function of $D$ satisfying $\wti \rho \in C^r(\R^N) \cap C^\infty (\R^n \sm bD)$, and for all $ x \in D$ with $0<\del(x) < \del_1$, the following hold
		\begin{gather*} 
		|D^j \wti \rho (x) | \lesssim C_j' | E_r \rho|_r ( 1+ \del(x)^{r-j}) 
		\lesssim  C_j'' |\rho|_r  ( 1+ \del(x)^{r-j}) , \quad j= 0,1,2,\dots, \quad x \in D;  
		\\  
		| \hht D^2 \wti \rho (x) - \hht D^2 \rho (x_\ast)  |  
		=  | \hht D^2 \wti \rho (x) - \hht D^2 (E_r \rho) (x_\ast)  |  \leq C' |x-x_\ast|, \quad |x_\ast-x| := \dist(x, bD). \qedhere 
		\end{gather*}  
	\end{proof}
 We now state a very useful result to prove H\"older estimates, popularly known as the Hardy-Littlewood lemma. For a proof the reader may refer to \cite[p.~345]{C-S01}.  
\begin{lemma}[Hardy-Littlewood lemma] \label{Lem::H-L} 
		Let $D$ be a bounded domain in $\R^N$ with $C^1$ boundary. Suppose $g \in C^k(\ov D)$ and that for some $0 < \beta < 1$ there is a constant $C$ such that 
		\[
		| D^{k+1} g (x)| \leq C \del(x)^{-1 + \beta}, \quad x \in D, 
		\] 
		where $\del(x) = \dist(x, bD)$. 
		Then $g \in C^{k+\beta}(\ov D)$. 
	\end{lemma}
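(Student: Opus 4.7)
The plan is to reduce to the base case $k=0$ and then exploit the $C^1$ geometry of $bD$ via an interior cone argument. First I would observe that if the lemma is established for $k=0$, then for general $k$ one applies the $k=0$ statement to each derivative of order $k$ of $g$ (which lies in $C^0(\ov D)$ by hypothesis and whose gradient is bounded by $|D^{k+1} g| \leq C \del^{-1+\beta}$); this yields $D^k g \in C^\beta(\ov D)$, i.e., $g \in C^{k+\beta}(\ov D)$.

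For the base case it suffices to show $|g(x)-g(y)| \ls |x-y|^\beta$ for all $x, y \in D$ and then extend by continuity. Set $d = |x-y|$. The easy subcase is $\min(\del(x), \del(y)) \geq 2d$: the segment $[x,y]$ stays in $\{\del \geq d\}$ because $\del$ is $1$-Lipschitz, and the mean value inequality gives $|g(x) - g(y)| \leq d \cdot C d^{-1+\beta} = C d^\beta$. In the other subcase, one of the endpoints lies within $2d$ of $bD$, and I would move both points inward. Since $bD \in C^1$, the domain satisfies a uniform interior cone condition: there exist $\eps_0, c_0 > 0$ and a Lipschitz unit vector field $V$ on $\{z \in D : \del(z) < \eps_0\}$, pointing into $D$, such that the line $\gamma_x(t) = x + tV(x)$ stays in $D$ and satisfies $\del(\gamma_x(t)) \geq c_0 t$ for $0 \leq t \leq \eps_0$. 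I would pick a large constant $C_0$, set $T = C_0 d/c_0$, and define $\wti x = \gamma_x(T)$, $\wti y = \gamma_y(T)$, so that $|\wti x - x| = T = O(d)$ and $\del(\wti x) \geq C_0 d$. Integrating along $\gamma_x$ then yields
\[
|g(x) - g(\wti x)| \leq \int_0^T C (c_0 s)^{-1+\beta}\, ds \ls d^\beta,
\]
and similarly for $y$. Because $|\wti x - \wti y| = O(d)$ while $\del \geq C_0 d$ at both endpoints, choosing $C_0$ large enough forces the segment $[\wti x, \wti y]$ into $\{\del \geq C_0 d/2\}$, and the easy subcase applies to $(\wti x, \wti y)$. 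Summing the three contributions gives $|g(x) - g(y)| \ls d^\beta$.

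The main obstacle is really just the construction of the inward vector field $V$ with the growth estimate $\del(\gamma_x(t)) \geq c_0 t$, which is standard for bounded $C^1$ (even Lipschitz) domains: locally $D$ is the subgraph of a $C^1$ function and the vertical direction into the epigraph works, and one glues via a partition of unity subordinate to a finite boundary atlas. All remaining steps reduce to the one-dimensional integration of $s^{-1+\beta}$, whose integrability near $s=0$ (thanks to $\beta > 0$) is what ultimately drives the estimate.
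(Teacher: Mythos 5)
The paper does not prove this lemma at all: it is the classical Hardy--Littlewood lemma, stated and then invoked later by name, so there is no in-paper argument to compare yours against. Your proof is the standard one and it is correct. The reduction to $k=0$ by applying the base case to each top-order derivative is fine, the ``both points deep inside'' case is immediate from the $1$-Lipschitz property of $\del$, and the ``push both points inward along a uniformly transversal direction'' step is the usual way to handle the near-boundary case; the convergence of $\int_0^T s^{-1+\beta}\,ds$ is indeed the crux.

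Two small points worth making explicit if you write this up. First, the step ``choosing $C_0$ large enough forces $[\wti x,\wti y]$ into $\{\del\ge C_0 d/2\}$'' needs a companion restriction to $d$ small: the extra separation incurred by the push-in is $T\,|V(x)-V(y)|\lesssim (C_0/c_0)\,L\,d^2$, which is $\le d$ only once $d\le c_0/(LC_0)$; since the range $|x-y|\ge d_0$ is trivially handled by the boundedness of $g$ (which you have from $g\in C^0(\ov D)$), this is harmless, but the quantifiers should be ordered correctly ($C_0$ first, then the smallness threshold for $d$). In fact you do not even need $V$ Lipschitz --- uniform continuity of $V$ already gives $T|V(x)-V(y)|=o(d)$ --- though the partition-of-unity construction does produce a Lipschitz field, so this is a simplification rather than a repair. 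Second, the lemma implicitly assumes $g\in C^{k+1}(D)$ (otherwise $D^{k+1}g$ is not defined pointwise); your proof uses this in the same implicit way the statement does, so it is not a gap relative to the lemma as posed.
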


	The following lemma can be found in \cite{Be93}. We provide the proof for the reader's convenience. 
	\begin{lemma} \label{Lem::ker_proj} 
		Let $D$ be a bounded domain $\Om \subset \C^n$ and let $B(z,w)$ and $\Pc$ denote the Bergman kernel and the Bergman projection for $D$, respectively. Given $w_0 \in D$, there exists a function $\phi_{w_0}$ in $C^\infty_c(D)$ such that
		\begin{equation} \label{ker_proj} 
		\DD{^{|\beta|}}{\ov w^\beta} B(z,w_0) = \Pc \phi^\beta_{w_0}(z), \quad \phi_{w_0}^\beta(z) := (-1)^{|\beta|} \DD{^{|\beta|}}{\ov z^\beta} \phi_{w_0}(z), 
		\end{equation}
		where $\beta$ is a multi-index. 
	\end{lemma}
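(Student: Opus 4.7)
The plan is to construct $\phi_{w_0}$ as a radial mollifier centered at $w_0$ and exploit the fact that $B(z,\cdot)$ is antiholomorphic on $D$ (since $B(z,w) = \overline{B(w,z)}$ and $B(\cdot,z)$ is holomorphic). In particular $B(z,\cdot)$ is harmonic, so the mean value property is available. This will immediately produce the reproducing identity for $\beta=0$, and the general case follows by differentiating under the integral using translation invariance.

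First I would fix $r>0$ small enough that $\overline{B(w_0,r)} \subset D$, and pick a nonnegative $\chi \in C^\infty_c([0,r^2))$ normalized so that $\sigma_{2n-1} \int_0^\infty \chi(s^2) s^{2n-1}\,ds = 1$, where $\sigma_{2n-1}$ denotes the surface measure of the unit sphere in $\mathbb{R}^{2n}$. Define $\phi_{w_0}(w) := \chi(|w-w_0|^2)$, which lies in $C^\infty_c(D)$. For the case $\beta = 0$, passing to spherical coordinates centered at $w_0$ and applying the mean value property on each sphere $\{|w-w_0|=s\}$ yields
\[
\mathcal{P}\phi_{w_0}(z) \;=\; \int_0^r \chi(s^2) \int_{|w-w_0|=s} B(z,w)\,d\sigma(w)\,ds \;=\; \sigma_{2n-1} B(z,w_0) \int_0^\infty \chi(s^2) s^{2n-1}\,ds \;=\; B(z,w_0),
\]
by the normalization of $\chi$.

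For the case of general multi-index $\beta$, I would write $\phi_{w_0}(w) = \phi(w-w_0)$ with $\phi(w) := \chi(|w|^2)$, so that by the chain rule $\partial_{\overline{w_0}}^\beta \phi(w-w_0) = (-1)^{|\beta|} \partial_{\overline{w}}^\beta \phi(w-w_0)$. Since the integrand $B(z,w)\phi(w-w_0)$ is smooth in $w_0$ and, for $w_0'$ in a small neighborhood of $w_0$, remains supported in a fixed compact subset of $D$, we may differentiate the identity from the previous step under the integral sign to obtain
\[
\partial_{\overline{w_0}}^\beta B(z,w_0) \;=\; (-1)^{|\beta|} \int_D B(z,w)\, \partial_{\overline{w}}^\beta \phi_{w_0}(w)\,dV(w) \;=\; \mathcal{P}\phi_{w_0}^\beta(z).
\]

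There is no substantive obstacle here; the argument is elementary. The only minor points requiring attention are choosing the normalization constant for $\chi$ correctly and justifying the differentiation under the integral when $\beta \neq 0$, the latter being immediate from compact support combined with dominated convergence.
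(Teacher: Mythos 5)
Your proof is correct and follows essentially the same route as the paper: a radially symmetric bump of unit mass centered at $w_0$, combined with the mean value property of the harmonic (antiholomorphic) function $B(z,\cdot)$, gives the case $\beta=0$. For general $\beta$ the paper integrates by parts in $\zeta$ while you differentiate the $\beta=0$ identity in $\overline{w_0}$ and use translation invariance of the mollifier; these are trivially equivalent, and both steps are justified.
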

	\begin{proof}
		Let $\del_0$ denote the distance from $w_0$ to $bD$ and let $B_1(\0)$ the unit ball in $\C^n$. Set
		\[
		\phi_{w_0} (z) = \del_0^{-2n} \phi\left( \frac{z-w_0}{\del_0} \right), \quad z \in D, 
		\] 
		where $\phi$ is a real-valued function in $C^\infty_c(B_1(\0))$ that is radially symmetric about the origin and $\int \phi \, dV = 1$. Clearly, $\phi_{w_0} \in C^\infty_c (D)$. By the property of the Bergman projection and the Bergman kernel, we have
		\begin{align*}
		\Pc \phi_{w_0} (z) &= \int_D B(z, \zeta) \phi_{w_0}(\zeta) \, dV(\zeta)
		\\ &= \int_D B(z,\zeta) \del_0^{-2n} \phi\left( \frac{\zeta-w_0}{\del_0} \right) \, dV(\zeta) 
		\\ &= \int_{B_1(\0)} B(z, \del_0 \zeta + w_0) \phi(\zeta) \, dV(\zeta) 
		\\ &= \ov{ \int_{B_1(\0)} B(\del_0 \zeta + w_0, z ) \phi(\zeta) \, dV(\zeta) }
		\\ &= \ov{B(w_0, z)} = B(z,w_0), 
		\end{align*}
		where we used the fact that $B$ is holomorphic in the first argument and thus both its real and imaginary parts are harmonic functions which satisfy the mean value property. This proves \re{ker_proj} for $\beta=0$. The general case follows similarly by repeating the above calculation and integration by parts. We leave the details to the reader. 
	\end{proof}
	\section{Estimates of the kernel} 
	In this section we follow Ligocka's idea to construct the kernel of the projection operator $\Lc$ for a strictly pseudoconvex domain. For now we assume the defining function $\rho$ is in the class $C^3$.  
	
	Suppose a bounded domain $D \subset \C^n$ is given by $D = \{ z \in \C^n: \rho(z) < 0 \}$. We write
	\[
	D_\del := \{ z \in \C^n: \rho(z) < \del \}, \quad \del>0.   
	\] 
	We shall sometimes write $D_\del(z)$ (or $D_\del(\zeta)$) to indicate that the domain is for the $z$ (or $\zeta$) variable.
	We now construct the kernel to be used in the integral formula. By setting $\rho' = e^{A\rho}-1$, for some large $A$, we see that $\rho'$ is strictly plurisubharmonic in a neighborhood of $\ov D$, and from now on we simply assume $\rho$ satisfies this property. 
	Define 
	\eq{F_def}
	F(z, \zeta) = \sum_{j=1}^n \DD{\rho}{\zeta_j} (\zeta)(\zeta_j - z_j) - \yh \sum_{i,j=1}^n \frac{\pa^2 \rho}{\pa \zeta_i \pa \zeta_j} (\zeta) (z_i - \zeta_i) (z_j - \zeta_j).  
	\eeq
	
	By Taylor's formula we have
	\begin{equation} \label{rho_Taylor} 
	\rho(z) =  \rho(\zeta) - 2 \RE F(z, \zeta)
	+ \Lc_\rho(\zeta; z-\zeta) + o(|z-\zeta|^2), 
	\end{equation} 
	where $\Lc_\rho(\zeta;t)$ is the Levi form of $\rho$ at $\zeta$, i.e. $\Lc_\rho(\zeta;t) := \sum_{i,j=1}^n \frac{\pa^2 \rho }{\pa \zeta_i \pa \ov \zeta_j } t_i \ov{t_j}$.  
	Fix some $\ve_0>0$ small such that for all $z, \zeta \in D_\del$, we have $\Lc_\rho(\zeta; z-\zeta) \geq c|z-\zeta|^2$. It follows from \re{rho_Taylor} that
	\begin{equation} \label{F_lbd}  
	\RE F(z,\zeta)  \geq \frac{\rho(\zeta) - \rho(z)}{2}  + \frac{c}{2} |z-\zeta|^2,  \quad 
	(z, \zeta) \in D_\del \times D_\del, \quad |z-\zeta| < \ve_0;   
	\end{equation} 
	\begin{equation} \label{F-rho_lbd}
	\RE F(z,\zeta) - \rho(\zeta)  \geq -\frac{\rho(\zeta) + \rho(z)}{2}  + \frac{c}{2} |z-\zeta|^2, \quad 
	(z, \zeta) \in D_\del \times D_\del, \quad |z-\zeta| < \ve_0. 
	\end{equation} 
	Let $\chi(t)$ be a smooth cut-off function such that $\chi (t) \equiv 1$ if $t < \frac{\ve_0}{4}$ and $\chi(t) \equiv 0$ if $t > \frac{\ve_0}{2}$. We define 
	the following global support function: 
	\begin{equation} \label{G_def} 
	G(z,\zeta) = \chi ( t ) F(z, \zeta) + ( 1- \chi(t) ) |z -\zeta|^2, \quad t= |z-\zeta|. 
	\end{equation}
 We also define the vector-valued functions $g_0 = (g_0^1, \dots, g_0^n)$ and $g_1 = (g_1^1, \dots, g_1^n)$, where 
\[
  g_0^i (z, \zeta) = \ov{\zeta_i - z_i}, \quad 1 \leq i \leq n, 
\] 
and 	
\eq{g1_def} 
	g_1^i (z, \zeta) = \chi \left( t \right) \left( \sum_{j=1}^n \DD{\rho}{\zeta_j} (\zeta) + \yh \sum_{j=1}^n  \frac{\pa^2 \rho}{\pa \zeta_i \pa \zeta_j} (\zeta) (z_j - \zeta_j) \right) + \left( 1- \chi\left( t \right) \right) ( \ov{\zeta_i - z_i} ), \quad 1 \leq i \leq n, 
	\eeq
	where $t = |z- \zeta|$. It follows that $\left< g_0, \zeta -z \right> = |\zeta -z|^2$ and $\left< g_1, \zeta -z \right> = G(z, \zeta)$. In view of \re{F-rho_lbd} and \re{G_def}, there exists some $c >0$ such that
	\eq{G-rho_lbd_1} 
	\RE G(z,\zeta) - \rho(\zeta) \geq 
	c( -\rho(\zeta) - \rho(z) + |z-\zeta|^2), \quad z, \zeta \in \ov D, 
	\eeq
and 
	\eq{G-rho_lbd_2} 
	\RE G(z,\zeta) - \rho(\zeta) \geq 
	\yf[ -\rho(\zeta) - \rho(z)] + c|z-\zeta|^2, \quad z,\zeta \in D_\del \times D_\del.  
	\eeq 
In particular, \re{G-rho_lbd_1} implies 
\eq{Phi_lbd}
  |G(z,\zeta) - \rho(\zeta)| \gtrsim |\zeta -z|^2, \quad z, \zeta \in D. 
\eeq

	We note that if the boundary is $C^{k+3+\all}$, then $g_1, G \in C^{\infty} \times C^{k+1+\all} (D_\del(z) \times D_\del(\zeta))$  and holomorphic in $z$ whenever $|z-\zeta| < \ve_0/4$.  
	Let
	\[
	\om_\la (z,\zeta) = \frac{1}{2 \pi \sqrt{-1}} \frac{\left<g_\la (z,\zeta), d\zeta \right> }{\left< g_\la, \zeta -z \right>}, \quad \la= 0,1. 
	\] 
	The associated Cauchy-Fantappie forms are given by
	\[
	\Om^\la = \om_\la \we (\dbar_{z,\zeta} \om_\la )^{n-1}, \quad \la = 0,1. 
	\] 
	\[
	\Om^{01} = \om_0 \we \om_1 \we \sum_{k_1 + k_2 = n-2} (\dbar_{z,\zeta} \om_0 )^{k_1} \we  (\dbar_{z,\zeta} \om_1 )^{k_2} . 
	\]  
	We decompose $\Om^\la = \sum_{0 \leq q \leq n} \Om^\la_{0,q}$ and $\Om^{01}= \sum_{0 \leq q \leq n}  \Om^{01}_{0,q}$, where $\Om^\la_{0,q}$ (resp. $\Om^{01}_{0,q}$) has type $(0,q)$ in $z$ and type $(n,n-1-q)$ type in $\zeta$. The following Koppleman's formula holds: 
	\eq{Kop} 
	\dbar_\zeta \Om^{01}_{0,q} + \dbar_z \Om^{01}_{0,q-1} = \Om^0_{0,q} - \Om^1_{0,q},  
	\eeq
	where we take $\Om^{01}_{0,-1} \equiv 0$. Write
	\eq{Om0_def}
	\Om^\la_{0,0} (z,\zeta) = \frac{1}{(2 \pi \sqrt{-1})^n} \frac{1}{\left< g_\la, \zeta-z \right>^n} \left( \sum_{i=1}^n g_\la^i d \zeta_i \right) \we \left( \sum_{i,j =1}^n \dbar_\zeta g_\la^i \we d \zeta_i \right)^{n-1} , \quad \la=0,1;  
	\eeq 
	\eq{Om01_def}
	\Om^{01}_{0,0} (z,\zeta) = \frac{1}{(2 \pi \sqrt{-1})^n} \frac{\left< \ov{\zeta -z}, d\zeta \right>}{|\zeta -z|^2} \we \frac{ \left< g_1, d\zeta \right>}{\left< g_1, \zeta-z \right>} \we 
	\sum_{k_1 + k_2 = n-2}
	\left( \frac{\left< d \ov{\zeta}, d\zeta \right>}{|\zeta -z|^2} \right)^{k_1} \we 
	\left( \frac{ \left< \dbar_\zeta g_1, d\zeta \right>}{\left< g_1, \zeta-z \right>} \right)^{k_2}. 
	\eeq 
	Define
	\begin{equation} \label{N_def} 
	\begin{aligned}
	N(z,\zeta) 
	&:= \frac{1}{(2 \pi \sqrt{-1})^n}  \frac{1}{[G(z,\zeta)-\rho(\zeta)]^n} \left( \sum_{i=1}^n g_1^i(z,\zeta) d \zeta_i \right) \we \left( \sum_{i=1}^n \dbar_\zeta g_1^i(z,\zeta) \we d \zeta_i \right)^{n-1} 
	\\ &= C_n \sum_{i=1}^n (-1)^{i-1} \frac{g_1^i (z,\zeta)}{[G(z,\zeta) - \rho(\zeta)]^n }  
	d\zeta \we (\dbar_\zeta g_1^1 ) \we \cdots \we 
	\hht{\left( \dbar_\zeta g_1^i \right) } \we \cdots \we (\dbar_\zeta g_n), 
	\end{aligned}
	\end{equation}
	where $\hht \eta$ means $\eta$ is being excluded. 
	Note that for $\zeta \in bD$, we have $N(z,\zeta) = \Om^1_{0,0}(z,\zeta)$. Therefore by \re{Kop} with $q =0$, 
	\eq{Kop_N} 
	\Om^0_{0,0}(z,\zeta) = \dbar_\zeta \Om^{01}_{0,0}(z,\zeta) + N(z,\zeta), \quad z \in D_{\ve_0}, \quad \zeta \in bD.
	\eeq  
	
	Let 
	\eq{ker_L_def}   
	L(z, \zeta) dV(\zeta) := \dbar_\zeta N(z,\zeta) - S_z (\dbar_z \dbar_\zeta N)(z,\zeta),   
	\eeq   
	where $S_z$ is H\"ormander's operator that solves $\dbar$ on $D_\del$. In what follows we write
	$L= L_0 + L_1$, where 
\[
 L_0 \, dV(\zeta)= - S_z (\dbar_z \dbar_\zeta N)(z,\zeta), \quad L_1 dV(\zeta) = \dbar_\zeta N(z,\zeta).
\]
For each $\zeta \in \ov D$, $L(\cdot,\zeta)$ is holomorphic on $D$. We also note that if $bD \in C^{k+3+\all}$, then $\dbar_\zeta N \in C^\infty \times C^{k+\all} (D(z) \times D(\zeta))$. In view of \re{G-rho_lbd_2}, \re{N_def} and the fact that $\dbar_z G(z,\zeta), \dbar_z g(z,\zeta) \equiv 0$ for $|z-\zeta| < \ve_0/4$, we see that $\dbar_z \dbar_\zeta N(z,\zeta)$ is a well-defined $\dbar$-closed $(0,1)$ form with coefficients in $C^\infty \times C^{k+\all} (D_\del(z) \times D_\del(\zeta))$, if $\del>0$ is sufficiently small.
	Write
	\begin{equation} \label{L1_exp}
	\begin{aligned} 
	&\dbar_\zeta N (z,\zeta) = L_1(z,\zeta) dV(\zeta)
	\\ &\quad =C_n \frac{\dbar_\zeta [G(z,\zeta) - \rho(\zeta)]}{[G(z,\zeta) - \rho(\zeta)]^{n+1}} \sum_{i=1}^n (-1)^{i-1} g_1^i (z,\zeta) d\zeta \we (\dbar_\zeta g_1^1 ) \we \cdots \we 
	\hht{\left( \dbar_\zeta g_1^i \right) } \we \cdots \we (\dbar_\zeta g_1^n)  
	\\ &\qquad + \frac{1}{[G(z,\zeta) - \rho(\zeta)]^n}
	d \zeta \we (\dbar_\zeta g_1^1) \we \cdots \we (\dbar_\zeta g_1^n) . 
	\end{aligned} 
	\end{equation} 
	In the proof we shall use the following convenient expression from \cite{Li84}: 
	\eq{L1_Lig_exp} 
	L_1(z,\zeta) = \frac{ \eta(\zeta)
		+ O'(|z-\zeta|) 
	}{[G(z,\zeta) - \rho(\zeta)]^{n+1} }, \quad
	\eta(\zeta):= c_n \det 
	\begin{vmatrix}
		\rho(\zeta) & \DD{\rho}{\zeta_i}(\zeta) 
		\\ 
		\DD{\rho}{\ov \zeta_i}(\zeta) & \frac{\pa^2 \rho}{\pa \zeta_i \pa \ov \zeta_j}(\zeta) 
	\end{vmatrix} . 
	\eeq 
	Here we note that $\ov{\eta (\zeta)} = \eta(\zeta)$, and $O'(|z-\zeta|)$ is some linear combination of products of $[ \hht{D^3} \rho (\zeta)] (\zeta_i - z_i)$, where $[\hht{D^3} \rho (\zeta)]$ denotes products of $\rho(\zeta)$ and $D^k_\zeta \rho(\zeta)$, $k \leq 3$. In particular, $O'( |z-\zeta|)$ satisfies the estimates 
 \begin{equation} \label{O'_est} 
   \begin{gathered} 
  | O'( |z-\zeta|)| \lesssim |\rho|_3 |\zeta-z|, \quad |D_z^l O'(|z-\zeta|) |\lesssim |\rho|_3, 
 \\  |D_\zeta^l O'(|z-\zeta|) |\lesssim |\rho|_{l+2} + |\rho|_{l+3} |\zeta -z|, \quad  l \geq 1. 
  \end{gathered}  
 \end{equation}

	We now define the integral operator: 
	\eq{Lc_def}
	\Lc f (z) := \int_D L(z, \zeta) f(\zeta) \,dV(\zeta)  = \int_D \left[ L_0(z,\zeta) + L_1(z,\zeta) \right] f(\zeta) \, dV(\zeta),  
	\eeq
	and the associated adjoint operator
	\[
	\Lc^\ast f (z)
	:= \int_D \ov{L(\zeta, z)} f(\zeta) \,dV(\zeta)
	= \int_D \left[ \ov{L_0(\zeta,z)} + \ov{L_1(\zeta,z)}  \right] f(\zeta) \, dV(\zeta).
	\]  
	In the same way as \re{L1_Lig_exp}, we can also write
	\eq{L_adj_Lig_exp}  
	\ov {L_1(\zeta, z)} = \frac{ \eta(z)
		+ O''(|z-\zeta|) 
	}{[\ov{G(\zeta,z)} - \rho(z) ]^{n+1} }, \quad
	\eta(z):= c_n \det 
	\begin{vmatrix}
		\rho(z) & \DD{\rho}{z_i}(z) 
		\\ 
		\DD{\rho}{\ov z_i}(z) & \frac{\pa^2 \rho}{\pa z_i \pa \ov z_j}(z)
	\end{vmatrix}. 
	\eeq 
	Here $O''(|z-\zeta|) $ is some linear combination of products of $[ \hht{D^3} \rho (z)] (\zeta_i - z_i)$, where $[\hht{D^3} \rho (z)]$ denotes products of $\rho(z)$ and $D^k_z \rho(z)$, $k \leq 3$. $O''(|z-\zeta|) $ satisfies the estimate 
	\begin{equation} \label {O''_est} 
 \begin{gathered} 
		| O''( |z-\zeta|)| \lesssim |\rho|_3 |\zeta-z|, \quad 
		|D_\zeta^l O''(|z-\zeta|) |\lesssim |\rho|_3, \\ 
		|D^l_z O''(|z-\zeta|)| \lesssim  |\rho|_{l+2} + |\rho|_{l+3} |\zeta-z|,  \; l \geq 1. 
  \end{gathered}
\end{equation} 
	Hence if $bD \in C^{k+3+\all}$, then $\ov{L_1(\zeta,z)}$ is $C^{k+\all} \times C^\infty(D(z) \times D(\zeta))$. 
	
	Let
	\begin{equation} \label{Kernel_def} 
	\begin{aligned} 
	K(z, \zeta) &:= \ov{L(\zeta, z)} - L(z, \zeta)
	\\ &= \left[ \ov{L_0(\zeta,z)} - L_0(z,\zeta) \right] + \left[ \ov{L_1(\zeta,z)} - L_1(z,\zeta) \right],    
	\end{aligned}
	\end{equation}
	and 
	\eq{Kc_def} 
	\Kc f(z): = \int_D K(z,\zeta) f(\zeta) \, dV(\zeta) = \int  \left[ \ov{L(\zeta, z)} - L(z, \zeta) \right] f(\zeta) \, dV(\zeta)
	= \Lc^\ast f(z) - \Lc f(z) . 
	\eeq 
	For later purpose we note that $\sqrt{-1} \Kc$ is a self-adjoint operator. 
	
The following cancellation estimate is due to \cite{K-S78}. We include a proof here for the reader's convenience. 
	\begin{prop} \label{Prop::cancel} 
		Let $D$ be a strictly pseudoconvex domain with a $C^3$ defining function $\rho$, with $0 < \all < 1$. Let $F(z,\zeta)$ be the function defined by formula \re{F_def}. Then
		\begin{equation} \label{cancel_est}  
		\left[ F(z,\zeta) - \rho(\zeta) \right] - 
		[ \ov {F(\zeta,z)} - \rho(z) ] = O(|\zeta-z|^3), 
		\end{equation}
		where $|O(|\zeta-z|^3)| \lesssim |\rho|_3 |\zeta-z|^3$. 
	\end{prop}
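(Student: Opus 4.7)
The plan is to reduce the identity to an elementary Taylor computation by setting $h = z - \zeta$ and expanding everything in powers of $h$, $\overline h$ around $\zeta$ up to order three. The key observation is that every term in the third-order Taylor expansion of $\rho(z) - \rho(\zeta)$ (namely the holomorphic linear, antiholomorphic linear, pure quadratic $hh$, pure quadratic $\overline{h}\overline{h}$, and mixed quadratic $h\overline{h}$ pieces) can be matched against a corresponding piece of $F(z,\zeta)$ or $\overline{F(\zeta,z)}$.

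First I would rewrite the quantity in \eqref{cancel_est} as
\[
[F(z,\zeta) - \rho(\zeta)] - [\overline{F(\zeta,z)} - \rho(z)]  =  \bigl(F(z,\zeta) - \overline{F(\zeta,z)}\bigr) - \bigl(\rho(\zeta) - \rho(z)\bigr),
\]
so it suffices to show $F(z,\zeta) - \overline{F(\zeta,z)} = \rho(\zeta) - \rho(z) + O(|h|^3)$. Since $\rho \in C^3$, the standard Taylor-remainder estimate gives
\[
\rho(z) - \rho(\zeta) = \sum_j \rho_{\zeta_j}(\zeta) h_j + \sum_j \rho_{\overline{\zeta_j}}(\zeta)\overline{h_j} + \tfrac{1}{2}\sum_{i,j}\rho_{\zeta_i\zeta_j}(\zeta) h_i h_j + \tfrac{1}{2}\sum_{i,j}\rho_{\overline{\zeta_i}\overline{\zeta_j}}(\zeta)\overline{h_ih_j} + \sum_{i,j}\rho_{\zeta_i\overline{\zeta_j}}(\zeta) h_i\overline{h_j} + O(|h|^3),
\]
with the remainder bounded by $|\rho|_3 |h|^3$.

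Next, note that from the definition \eqref{F_def},
\[
F(z,\zeta) = -\sum_j \rho_{\zeta_j}(\zeta) h_j - \tfrac{1}{2}\sum_{i,j}\rho_{\zeta_i\zeta_j}(\zeta) h_i h_j,
\]
which already accounts exactly for the holomorphic linear piece and pure $hh$ quadratic piece of $-(\rho(z) - \rho(\zeta))$. For $F(\zeta,z) = \sum_j \rho_{z_j}(z)h_j - \tfrac{1}{2}\sum_{i,j}\rho_{z_iz_j}(z)h_ih_j$, the derivatives of $\rho$ are evaluated at $z$, so I would Taylor-expand them around $\zeta$: $\rho_{z_j}(z) = \rho_{\zeta_j}(\zeta) + \sum_k \rho_{\zeta_j\zeta_k}(\zeta) h_k + \sum_k \rho_{\zeta_j\overline{\zeta_k}}(\zeta)\overline{h_k} + O(|h|^2)$ and $\rho_{z_iz_j}(z) = \rho_{\zeta_i\zeta_j}(\zeta) + O(|h|)$. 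Substituting and collecting, a partial cancellation reduces $F(\zeta,z)$ to
\[
F(\zeta,z) = \sum_j \rho_{\zeta_j}(\zeta) h_j + \tfrac{1}{2}\sum_{j,k}\rho_{\zeta_j\zeta_k}(\zeta) h_jh_k + \sum_{j,k}\rho_{\zeta_j\overline{\zeta_k}}(\zeta) h_j\overline{h_k} + O(|h|^3).
\]

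Conjugating (using $\overline{\rho_{\zeta_j}} = \rho_{\overline{\zeta_j}}$, etc.\ and commuting partial derivatives) then gives an expansion of $\overline{F(\zeta,z)}$ whose terms are precisely the antiholomorphic linear piece, the pure $\overline{h}\overline{h}$ quadratic piece, and the mixed quadratic piece of $\rho(z)-\rho(\zeta)$. Subtracting from $F(z,\zeta)$ and matching against the Taylor expansion of $\rho(z) - \rho(\zeta)$ displayed above yields $F(z,\zeta) - \overline{F(\zeta,z)} = \rho(\zeta) - \rho(z) + O(|h|^3)$, as desired. The only care needed is bookkeeping of indices and signs, and verifying that each remainder is bounded by $|\rho|_3 |h|^3$; the latter follows from the $C^3$ regularity of $\rho$ applied to Taylor remainders for $\rho$ (third-order), $\rho_{z_j}$ (second-order), and $\rho_{z_iz_j}$ (first-order). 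There is no substantive obstacle beyond this direct matching.
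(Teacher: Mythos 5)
Your proposal is correct and follows essentially the same route as the paper's proof: a third-order Taylor expansion that exploits the symmetry of the Levi polynomial, with the only (immaterial) difference being that you expand the coefficients of $F(\zeta,z)$ around the base point $\zeta$, whereas the paper expands the coefficients of $F(z,\zeta)$ around $z$ and matches against the Taylor polynomial of $\rho$ at $z$. All the sign bookkeeping and remainder bounds in your sketch check out under the $C^3$ hypothesis.
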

	\begin{proof} 
		By \re{F_def} we have 
		\begin{align*}
		F(z,\zeta) &= \sum_{j=1}^n \DD{\rho}{\zeta_j} (\zeta)(\zeta_j - z_j) - \yh \sum_{i,j=1}^n \frac{\pa^2 \rho}{\pa \zeta_i \pa \zeta_j}(\zeta) (z_i - \zeta_i) (z_j - \zeta_j)
		\\ &= \sum_{j=1}^n \left[ \DD{\rho}{z_j}(z) + \sum_{k=1}^n \frac{\pa^2 \rho}{\pa z_j \pa z_k} (z)  (\zeta_k - z_k) + \sum_{k=1}^n \frac{\pa^2 \rho}{\pa z_j \pa \ov z_k}(z) (\ov{\zeta_k -z_k}) \right] (\zeta_j -z_j) 
		\\ &\quad - \sum_{i,j=1}^n \yh \frac{\pa^2 \rho}{\pa z_i \pa z_j}(z) (z_i - \zeta_i) (z_j - \zeta_j)
		+ R(z,\zeta)  
		\\ &=\sum_{j=1}^n \DD{\rho}{z_j}(z)(\zeta_j -z_j) + \yh \sum_{i,j=1}^n \frac{\pa^2 \rho}{\pa z_i \pa z_j} (z) (\zeta_i -z_i)(\zeta_j -z_j)
  \\ &\quad + \sum_{i,j=1}^n \frac{\pa^2 \rho}{\pa z_i \pa \ov z_j}(z)(\zeta_i -z_i)(\ov{\zeta_j - z_j}) + R_0(z,\zeta), 
		\end{align*} 
		where we did Taylor expansion for the function $\DD{\rho}{\zeta_j}$ at $z$. Since $\rho \in C^3$, the remainder term $R_0$ satisfies $|R_0(z,\zeta)| \lesssim |\rho|_3 |\zeta-z|^3$. 
		On the other hand, 
		\begin{align*} 
		\ov{F(\zeta,z)} &= \sum_{j=1}^n \DD{\rho}{\ov z_j}(z)(\ov{z_j- \zeta_j}) 
		- \yh \sum_{i,j=1}^n \frac{\pa^2 \rho}{\pa  \ov z_i \pa \ov z_j}(z) \ov{(\zeta_i - z_i)}\ov{(\zeta_j - z_j)}.
		\end{align*} 
		Hence
		\begin{align*}
		&F(z,\zeta) - \ov{F(\zeta, z)}
		= \sum_{j=1}^n \DD{\rho}{z_j} (z)(\zeta_j - z_j) + \sum_{j=1}^n \DD{\rho}{\ov z_j}(z)(\ov{\zeta_j- z_j}) 
		\\ &\quad+ \RE \left( \sum_{i,j=1}^n \frac{\pa^2 \rho}{\pa z_i \pa z_j} (z) (\zeta_i -z_i)(\zeta_j -z_j) \right)
		+ \sum_{i,j=1}^n \frac{\pa^2 \rho}{\pa z_i \pa \ov z_j}(z)(\zeta_i -z_i)(\ov{\zeta_j - z_j}) + R_0(z,\zeta), 
		\end{align*}
		where $|R_0(z,\zeta)| \lesssim |\rho|_3 |\zeta-z|^3$. 
The first four terms on the right-hand side are exactly the first and second order terms in the Taylor polynomial of $\rho$ at $z$, which is equal to $\rho(\zeta) - \rho(z) + R_1(z,\zeta)$, where $|R_1(z,\zeta)| \lesssim |\rho|_3|\zeta -z|^3$. Hence
		\[
		F(z,\zeta) - \ov{F(\zeta, z)} = \rho(\zeta) - \rho(z) + R(z,\zeta), 
		\] 
		where $|R(z,\zeta)| \lesssim |\rho|_3|\zeta-z|^3$. 
	\end{proof}
 In what follows we shall denote
	\[
	\Phi(z,\zeta) = G(z,\zeta) - \rho(\zeta), \quad
	\ov \Phi(\zeta,z) = \ov{G(\zeta,z)} - \rho(z). 
	\] 
	\begin{lemma} \label{Lem::int_est} 
		Let $D$ be a bounded strictly pseudoconvex domain with $C^3$ boundary in $\C^n$, $n\geq 2$, Let $\rho$ be the defining function of $D$. Let $0 < \beta \leq 1$. Let $\Theta(z,\zeta)$ denote either $\Phi(z,\zeta)$ or $\Phi( \zeta,z)$.  
		\begin{enumerate}[(i)] 
			\item Let $0 < \beta \leq 1$. Then
			\begin{equation}  \label{zeta-z_Phi_int_1} 
			\int_D \frac{dV(\zeta)}{|\zeta-z|^{2-\beta} |\Theta(z,\zeta)|^{n+1} }  \lesssim 
   1 + \del(z)^{\frac{\beta}{2} - 1}, 
			\end{equation} 
			 where the constant depends only on $D$.  
			\item
			Let $\beta >0$. Then 
			\begin{equation}  \label{zeta-z_Phi_int_2}  
			\int_{B(z,\tau)} \frac{|z-\zeta|^\beta }{ |\Theta(z,\zeta)|^{n+1} } dV(\zeta)  \lesssim \tau^{\frac{\beta}{2}}, 
			\end{equation}  
			where the constant depends only on $D$.  
		\end{enumerate} 
	\end{lemma} 
	\begin{proof}
		First, we show that for each fixed $z \in D$, there exists a small neighborhood $U_z$ and a coordinate chart $\phi_z: \Uc_z \to \R^{2n}$ with $\phi_z(\zeta) = ((s_1,s_2),t) \in \R^2 \times \R^{2n-2}$ and 
		\eq{Phi_st_lb_est}  
		|\Phi(z,\zeta) |, | \Phi(\zeta, z)| \gtrsim \del(z) + |s_1| + |s_2| + |t|^2, \quad |\zeta-z|  \gtrsim |(s_2,t)|.   
		\eeq  
		Here $\del(z):= \dist (z, bD)$ and in the following computation we shall just write $\del$. 
		We define $s_1 (\zeta) = \rho(\zeta)$ and $s_2(\zeta) = \IM \Phi(z,\zeta) $. Recall that $\Phi(z,\zeta) = F(z,\zeta) - \rho(\zeta)$ when $z, \zeta$ are close and 
		\[
		F(z,\zeta) = \sum_{j=1}^n \DD{\rho}{\zeta_j}(\zeta) ( \zeta_j -z_j) + O ( |\zeta-z|^2). 
		\]
		Hence at $\zeta =z$, we have
		\begin{align*} 
		d_\zeta \IM \Phi(z,\zeta) \we d_\zeta \rho(\zeta) 
		&= d_\zeta \IM F(z,\zeta) \we d_\zeta \rho(\zeta)
		\\ &= \frac{1}{2\sqrt{-1}} ( \pa_\zeta \rho -  \dbar_\zeta \rho) \we (\pa_\zeta \rho + \dbar_\zeta \rho)
		\\ &= \frac{1}{\sqrt{-1}} \pa \rho \we \dbar \rho \neq 0.
		\end{align*}
		We can then find smooth real-valued functions $t_j$, $1 \leq j \leq 2n-2$, with $t_j(\zeta) = 0$ at $\zeta =z$ and 
		\[
		d_\zeta \rho(\zeta) \we d_\zeta \IM \Phi(z,\zeta) \we dt_1(\zeta) \we \cdots \we dt_{2n-2}(\zeta) \neq 0 \quad \text{at $\zeta =z$. }
		\] 
		By the inverse function theorem, $\phi_z =(s_1,s_2,t)$ defines a $C^1$ coordinate map in small neighborhood of $z$. 
		
		To prove the first statement in \re{Phi_st_lb_est}, we use estimate \re{G-rho_lbd_1} which says $ \RE \Phi(z,\zeta) \gtrsim -\rho(\zeta) - \rho(z) + |\zeta-z|^2$, for all $z,\zeta \in D$. It follows that  
		\begin{align*} 
		|\Phi(z,\zeta)| \gtrsim |\RE \Phi (z,\zeta)| + |\IM \Phi(z,\zeta)| 
		\gtrsim \del(z) + |s_1(\zeta)| + |s_2(\zeta)| + |t(\zeta)|^2. 
		\end{align*}
		For $\Phi(\zeta,z)$ the argument goes the same: We note that $ \Phi(\zeta,z) = F(\zeta,z) - \rho(z)$ when $z,\zeta$ are close, and
		\[
		F(\zeta,z) = \sum_{j=1}^n \DD{\rho}{z_j} (z) (z_j -\zeta_j) +  O(|\zeta-z|^2). 
		\]  
		Thus at $\zeta =z$, 
		\begin{align*} 
		\left. d_\zeta \IM \Phi(\zeta,z) \we d_\zeta \rho(\zeta) \right|_{\zeta =z} 
		&= d_\zeta \IM F(\zeta, z) \we d_z \rho(z)
		\\ &= \frac{1}{2 \sqrt{-1} } ( \dbar_z \rho(z) - \pa_z \rho(z) ) \we (\pa_z \rho(z) + \dbar_z \rho(z))
		\\ &= \frac{1}{\sqrt{-1} } \dbar \rho  \we \pa \rho \neq 0.  
		\end{align*}
		The second statement in \re{Phi_st_lb_est} follows from the fact that $s_2(z) = t(z) = 0$. Now, both $\Phi(z,\zeta)$ and $|\zeta-z|$ are bounded below by some positive constant for $\zeta \notin \Uc_z$. Hence using partition of unity in $\zeta$ space, we can bound the integral on the left-hand-side of \re{zeta-z_Phi_int_1} by a constant times
		\[
		\int_0^1  \int_0^1  \int_0^1  \frac{t^{2n-3} ds_1 \, ds_2 \, dt}{(s_2 + t)^{2- \beta} (\del + s_1 + s_2 + t^2)^{n+1} }
		\lesssim \int_0^1 \int_0^1 \frac{ r t^{2n-5+ \beta} \, dr \, dt}{(\del + r + t^2)^{n+1}} := I, 
		\]
		where we used the polar coordinates for $(s_1, s_2)$ with $r = |s|$. We can estimate the integral $I$ by separating into different cases. 
		\\ 
		\textit{Case 1: $\del> r, t^2$. }   
		\[
		I \leq \del^{-(n+1)} \left( \int_0^\del r \, dr \right) \left( \int_0^{\sqrt \del} t^{2n-5+\beta} \, dt \right) \lesssim 1+
		\del^{-n-1+2+\frac{2n-4+ \beta}{2}} = 1 + 
\del^{-1 + \frac{\beta}{2}} . 
		\] 
		\textit{Case 2: $r > \del, t^2$.} 
		\[
		I \leq \int_\del^1 r^{-n} \left( \int_0^{\sqrt r} t^{2n-5+\beta} \, dt \right) \, dr
		\lesssim \int_\del^1 r^{-n+\frac{2n-4+\beta}{2}} \, dr
		\lesssim  1+ \del^{-1+\frac{\beta}{2}}. 
		\] 
		\textit{Case 3: $t^2 > \del,r$.}  
		\[
		I \leq \int_{\sqrt{\del}}^1 \left( \int_0^{t^2} r \, dr \right) t^{2n-5+\beta-2n-2} \, dt
		\lesssim \int_{\sqrt{\del}}^1 t^{\beta -3} \, dt
		\lesssim 1+ \del^{-1+ \frac{\beta}{2}}. 
		\] 
		Combining the estimates we obtain  \re{zeta-z_Phi_int_1}.  
		\\ \\ 
		(ii)  Since $|\Theta(z,\zeta)| \gtrsim |z-\zeta|^2$, the integral is bounded by 
		\begin{align*}
		\int_{B_\tau(z)} \frac{|z-\zeta|^\beta }{ |\Theta(z,\zeta)|^{n+1} } dV(\zeta) 
		&\leq \int_{B_\tau(z)} \frac{dV(\zeta) }{ |z-\zeta|^{2-\beta} |\Theta(z,\zeta)|^n } 
		\\ &\lesssim   \int_0^1 \int_0^1 \int_0^1 \frac{t^{2n-3}\, ds_1 \, d s_2 \, dt}{(s_2+t)^{2-\beta} (\del + s_1+s_2 + t^2)^n } 
		\\ &\lesssim  \int_{s_1=0}^1 \int_{s_2=0}^ \tau \int_{t=0}^\tau \frac{ t^{2n-5+\beta} \, ds_1 \, ds_2 \, dt}{(s_1 + s_2 + t^2)^n}:= I. 
		\end{align*}
		Here we used the fact that $|\zeta-z| \gtrsim (s_2,t)$ and thus $\zeta \in B_\tau(z)$ implies $|s_2|,|t|< \tau$.  
		
		We consider several cases. \\ 
		Case 1: $s_1 > \tau$, the integral is bounded by 
		\begin{align*}
		I \leq \int_\tau^1 \frac{ds_1}{s_1^n} \int_0^\tau \, ds_2 \int_0^\tau t^{2n-5+\beta} \, dt
		\lesssim \tau^{-n+1 +1 + 2n-4+\beta} 
		= \tau^{n-2+\beta} \lesssim \tau^\beta. 
		\end{align*}
		Case 2: $s_1 < \tau$, then we have $|s| < r$, for $s=(s_1,s_2)$. Divide further into subcases. If $t^2>s$, then 
		\begin{align*}
		I \lesssim \int_0^\tau \int_0^\tau \frac{s  t^{2n-5+\beta} \,ds \, dt}{(s+ t^2)^n} 
		\leq \int_0^\tau \left( \int_0^{t^2} s \, ds \right) t^{2n-5+\beta -2n} \, dt
		\lesssim \int_0^\tau t^{\beta-1} \, dt
		\lesssim \tau^\beta. 
		\end{align*}
		On the other hand, if $t^2 < s$, then 
		\begin{align*}
		I &\lesssim \int_0^\tau \left( \int_0^{\sqrt s} t^{2n-3-2+\beta} \, dt \right) \frac{s}{s^n} \, ds \lesssim \int_0^\tau s^{\frac{2n-4+\beta}{2}-n+1}  \, ds \lesssim \int_0^\tau s^{\frac{\beta}{2} -1} \, ds \lesssim \tau^{\frac{\beta}{2}}.  \qedhere
		\end{align*} 
	\end{proof}
	From the proof of \rl{Lem::int_est}, we see that for fixed $\zeta$, we can find a neighborhood $U_\zeta$ of $\zeta$ and a coordinate chart $\phi_\zeta: U_\zeta  \to \R^{2n}$ with  $\phi_\zeta(z) = (s_1', s_2',t') \in \R \times \R \times \R^{2n-2}$.  Indeed, we can set $s_1'(z) = \rho(z)$ and $s_2'(z)= \IM \Phi(z,\zeta)$. At $z=\zeta $, 
	\begin{align*} 
	d_z \IM \Phi(z,\zeta) \we d_z \rho(z) 
	&= d_z \IM F(z,\zeta) \we d_z \rho(z) 
	\\ &= \frac{1}{2 \sqrt{-1}} ( \dbar_\zeta \rho (\zeta) - \pa_\zeta \rho (\zeta)) \we (\pa_\zeta \rho (\zeta)+ \dbar_\zeta \rho (\zeta) )
	\\ &= \frac{1}{\sqrt{-1}} \dbar \rho (\zeta) \we \pa \rho (\zeta) \neq 0.
	\end{align*}
	Hence there exists smooth real-valued functions $t_j'$, $1 \leq j \leq 2n-2$ with $t_j' (z) = 0$ and 
	\[
	d_z \rho(z) \we d_z \IM \Phi(z,\zeta) \we dt_1'(\zeta) \we \cdots \we dt'_{2n-2}(\zeta) \neq 0 \quad \text{at $z = \zeta$. } 
	\] 
	Consequently $(s_1' , s_2', t')$ is  the desired coordinate chart in the $z$ variable.  Now by the same estimate as in the proof of \rl{Lem::int_est}, we can prove the following: 
	\begin{lemma} 
		Keeping the assumptions of \rl{Lem::int_est}.  
		\begin{enumerate}[(i)]
			\item Let $0 < \beta \leq 1$. Then 
			\begin{equation}  \label{zeta-z_Phi_int_3} 
			\int_D \frac{dV(z)}{|\zeta-z|^{2-\beta} |\Theta(z,\zeta)|^{n+1} }  \lesssim 1 + \del(\zeta)^{\frac{\beta}{2} - 1}, \quad \del(\zeta):= \dist(\zeta, bD), 
			\end{equation} 
				where the constant depends only on $D$.  
			\item
			Let $\beta >0$, and denote by $B_\ta (z)$ the ball of radius $\tau$ centered at $z$. Then 
			\begin{equation}  \label{zeta-z_Phi_int_4}   
			\int_{B_\tau(z)} \frac{|z-\zeta|^\beta }{ |\Theta(z,\zeta)|^{n+1} } dV(z)  \lesssim \tau^{\frac{\beta}{2}}, 
			\end{equation} 
			 where the constant depends only on $D$.  
		\end{enumerate} 
	\end{lemma}
  
	\begin{lemma} \label{Lem::Phi_int_est} 
		Let $D$ be a bounded strictly pseudoconvex domain with $C^3$ boundary in $\C^n$, $n\geq 2$, and let $\rho$ be its defining function. Let $\Theta(z,\zeta)$ denote either $\Phi(z,\zeta)$ or $\Phi(\zeta, z)$. Denote $\del(z):= \dist(z, b D)$.  
		\begin{enumerate}[(i)]
			\item 
			\begin{equation} \label{Phi_int_1}   
			\int_D \frac{dV(\zeta)}{ |\Theta(z,\zeta)|^{n+1} }  \lesssim 1+ \log \del(z), \quad z \in D,   
			\end{equation} 
		where the constant depends only on $D$.  
			\item 
			\begin{equation} \label{Phi_int_2}    
			\int_{bD} \frac{d\si (\zeta)}{ |\Theta(z,\zeta)|^n }  \lesssim 1+ \log \del(z), \quad z \in D, 
			\end{equation} 
			where the constant depends only on $D$.   
		\end{enumerate}
	\end{lemma}
	\begin{proof} 
		(i) In the proof we shall write $\del(z)$ simply as $\del$. 
		For fixed $z \in D$, let $\zeta \mapsto (s_1,s_2,t)$ be the coordinate chart in a neighborhood $U_z$ of $z$ as constructed in the proof of \rl{Lem::int_est}. 
		Let $\chi_0$ be a smooth cut-off function such that $\supp \chi_0 \subset E_0(z):= \{ \zeta \in D: -\rho(\zeta) - \rho(z) + |z-\zeta| \leq \si \}$ and $\chi_0 \equiv 1$ on the set $E_1(z): =\{ \zeta \in D: -\rho(\zeta) - \rho(z) + |z-\zeta| \leq \frac{\si}{2} \} $. We choose $\si$ sufficiently small such that $E_0(z) \subset U_z$. 
		Then
	\[ 
		\int_D \frac{dV(\zeta)}{ |\Theta(z,\zeta)|^{n+1} } =  \int_{D \cap E_0} \frac{\chi_0 (\zeta) dV(\zeta)}{ |\Theta(z,\zeta)|^{n+1} } +  \int_{D \sm E_1} \frac{(1-\chi_0(\zeta)) dV(\zeta)}{ |\Theta(z,\zeta)|^{n+1} }. 
	\] 
		In view of \re{G-rho_lbd_1}, the second integral is bounded by a constant independent of $z \in D$. 
		
		The first integral is bounded by 
		\begin{align*}
		\int_{D \cap E_0(z)} \frac{dV(\zeta)}{ |\Theta(z,\zeta)|^{n+1} } &\lesssim \int_0^1 \int_0^1 \int_0^1 \frac{t^{2n-3} \, ds_1 \, ds_2 \, dt}{(\del + s_1+ s_2+ t^2)^{n+1}} 
		\lesssim \int_0^1 \int_0^1 \frac{t^{2n-3} r \, dr \, dt}{(\del + r + t^2)^{n+1}}:= I, 
		\end{align*}
		where we used the polar coordinates for $r =(s_1, s_2)$. 
		We split the integral into the following cases: \\
		\tit{Case 1: $\del+ r \geq t^2$.} \\ 
		\begin{align*}
		I \lesssim \int_0^1 \frac{r}{(\del + r)^{n+1}} \left( \int_0^{\sqrt{\del +r}} t^{2n-3} \, dt \right) \, dr
		\lesssim \int_0^1 (\del + r)^{1-n-1+\frac{2n-2}{2}} \, dr  
		= \int_0^1 (\del +r)^{-1} \, dr \lesssim 1+ \log \del. 
		\end{align*}
		\tit{Case 2: $\del+ r \leq t^2$.} \\ 
		\begin{align*} 
		I \lesssim  \int_0^1 r \left( \int_{\sqrt{\del+r}}^1 \frac{t^{2n-3} \, dt}{t^{2n+2}} \right) \, dr 
		\lesssim \int_0^1 (\del+r)^{-1} \, dr
		\lesssim 1+ \log \del. 
		\end{align*}
		\\
		\medskip 
		(ii) 
		Since $s_1(\zeta) = \rho(\zeta) \equiv 0$ for $\zeta \in bD$, for fixed $z$, there exists some neighborhood $U_z$ of $z$ such that $ \zeta \mapsto (s_2,t)$ is a coordinate chart for $\zeta \in bD \cap U_z$. Let $\chi_0, E_0$ be the same as in the proof of (i). We only have to estimate
		\begin{align*}
		\int_{bD \cap E_0} \frac{ \chi_0 \, d \si(\zeta)}{ |\Theta(z,\zeta)|^n } &\lesssim \int_0^1 \int_0^1 \int_0^1 \frac{t^{2n-3} ds_2 \, dt}{(\del + s_2+ t^2)^n} 
		:= I. 
		\end{align*} 
		Split the integral into two cases. \\ 
		\tit{Case 1: $ \del + s_2 \geq t^2$.} 
		\begin{align*}  
		I \lesssim \int_0^1 \frac{1}{(\del + s_2)^n}  \left( \int_0^{\sqrt{\del+s_2}} t^{2n-3} \, dt  \right) \, ds_2 \lesssim \int_0^1 (\del + s_2)^{-1}  \, ds_2 \lesssim  1 + \log \del. 
		\end{align*}  
		\tit{Case 2: $ \del + s_2 \leq t^2$.} 
		\begin{align*} 
		I \lesssim  \int_0^1 \left( \int_{\sqrt{\del+s_2}}^1 \frac{t^{2n-3} \, dt}{t^{2n}} \right) \, ds_2
		\lesssim \int_0^1 (\del+s_2)^{-1} \, ds_2
		\lesssim 1+ \log \del. 
		\end{align*}  
	\end{proof} 
	We now prove the $L^2$ boundedness of  the operator $\Kc$, assuming boundary is only $C^3$. This result is stated in \cite{Li84} assuming the boundary is $C^4$, and the proof over there uses a much more general estimate from \cite{Kra76}. We shall instead give a direct proof here.  
	\begin{prop} \label{Prop::Kopt_L2_bdd} 
	Let $D$ be a strictly pseudoconvex domain in $\C^n$ with $C^3$ boundary, and let $\Kc$ be the operator given by formula \re{Kc_def}. Then $\Kc$ defines a bounded operator from $L^2 (D)$ to $L^2(D)$. 
	\end{prop}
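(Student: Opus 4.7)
The plan is to apply Schur's test. First observe that $K(z,\zeta) = -\ov{K(\zeta,z)}$ (equivalently, $i\Kc$ is self-adjoint as noted), so $|K(z,\zeta)| = |K(\zeta,z)|$; hence it suffices, with weight $h \equiv 1$, to prove
\[
\sup_{z \in D} \int_D |K(z,\zeta)| \, dV(\zeta) < \infty.
\]
I would split $K = K_0 + K_1$, where $K_\la(z,\zeta) := \ov{L_\la(\zeta,z)} - L_\la(z,\zeta)$ for $\la = 0,1$, and treat each piece separately.

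For $K_0$, note that $N(z,\zeta)$ is holomorphic in $z$ when $|z-\zeta| < \ve_0/4$, so $\dbar_z\dbar_\zeta N$ vanishes there and is therefore a smooth $\dbar$-closed $(0,1)$ form in $z$ on all of $D_\del(z) \times D_\del(\zeta)$. Applying $S_z$ fiberwise and invoking \rp{Prop::Hormander} on an intermediate subdomain $D' \subset\subset D_\del$ with $D' \supset \ov D$ shows that $L_0(z,\zeta)$ is uniformly bounded on $\ov D \times \ov D$. Hence $\Kc_0$ is trivially bounded on $L^2(D)$, and the whole task reduces to $K_1$.

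For $K_1$, the Kerzman-Stein cancellation \rp{Prop::cancel} is the essential input. Set $A := G(z,\zeta) - \rho(\zeta)$ and $B := \ov{G(\zeta,z)} - \rho(z)$, and decompose
\[
\ov{L_1(\zeta,z)} - L_1(z,\zeta) = \frac{\eta(z) - \eta(\zeta)}{B^{n+1}} + \eta(\zeta)\left(\frac{1}{B^{n+1}} - \frac{1}{A^{n+1}}\right) + \frac{O''(|z-\zeta|)}{B^{n+1}} - \frac{O'(|z-\zeta|)}{A^{n+1}}.
\]
Since $\rho \in C^3$, the function $\eta$ is $C^1$ and $|\eta(z) - \eta(\zeta)| \ls |z-\zeta|$. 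For $|z-\zeta| < 1/4$, the cutoff gives $G = F$, and \rp{Prop::cancel} yields $|A - B| \ls |z-\zeta|^3$; combined with $|A| \asymp |B| \gtrsim |z-\zeta|^2$ from \re{G-rho_lbd_1} and the telescoping identity $A^{n+1} - B^{n+1} = (A-B)\sum_{j=0}^n A^j B^{n-j}$, one controls the middle term by $|z-\zeta|^3/|A|^{n+2} \ls |z-\zeta|/|A|^{n+1}$. The $O'$, $O''$ terms are each $\ls |z-\zeta|/|A|^{n+1}$ directly by \re{O'_est} and \re{O''_est}. For $|z-\zeta| \geq 1/4$ the denominators are bounded below, so the kernel is bounded. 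Combining,
\[
|K_1(z,\zeta)| \ls \frac{|z-\zeta|}{|G(z,\zeta) - \rho(\zeta)|^{n+1}}.
\]

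To conclude Schur's test, I would use the elementary bound $|G(z,\zeta) - \rho(\zeta)| \gtrsim c|z-\zeta|^2$ to write
\[
\frac{|z-\zeta|}{|G(z,\zeta) - \rho(\zeta)|^{n+1}} \ls \frac{1}{|G(z,\zeta) - \rho(\zeta)|^{n+1/2}},
\]
and invoke the standard integral estimate on strictly pseudoconvex domains, which gives $\int_D dV(\zeta)/|G(z,\zeta)-\rho(\zeta)|^{n+1/2} \leq C$ uniformly in $z$ since the exponent is strictly below the critical value $n+1$. The main obstacle is the pointwise estimate on $K_1$: one must use the \emph{cubic} cancellation of \rp{Prop::cancel} (rather than the naive quadratic bound $|A-B| = O(|z-\zeta|^2)$ one gets from Taylor) to beat the $|A|^{-(n+2)}$ singularity and land on an integrable majorant. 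The additional care needed over Ligocka's $C^4$ setup is to verify that $\eta \in C^1$ (hence the Lipschitz estimate on numerators) already follows from $\rho \in C^3$.
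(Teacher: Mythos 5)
Your proof is correct and follows essentially the same route as the paper: Schur's test combined with the Kerzman--Stein cancellation of \rp{Prop::cancel} and the same four-term splitting of the $L_1$ difference (the $\eta(z)-\eta(\zeta)$ term, the difference of reciprocals controlled by the cubic cancellation and telescoping, and the two $O'$, $O''$ terms), landing on the same integrable majorant. The only (valid) deviations are that you use the antisymmetry $K(z,\zeta)=-\ov{K(\zeta,z)}$ to reduce to a single Schur integral where the paper verifies both, and that you explicitly dispose of the $K_0$ piece, which the paper's written proof leaves implicit.
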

	\begin{proof} 
		We shall apply Schur's test (see for example \cite{Wol03}), which in our case can be formulated as follows. If 
		\eq{Schur_int_1} 
  \int_D | K(z,\zeta)| \, dV(\zeta) \leq A, \quad \text{for each $z$}, 
		\eeq 
		and 
		\eq{Schur_int_2}
\int_D | K(z,\zeta)| \, dV(z) \leq B, \quad \text{for each $\zeta$}, 	
		\eeq
		then for $f \in L^2 (D)$, $\Kc_f$ defined by the integral $\int_D K(z,\zeta) f(\zeta) \, dV(z)$ converges a.e. and there is an estimate 
		\[
		\| \Kc f \|_{L^2(D)} \leq \sqrt{AB} \| f \|_{L^2(D)}. 
		\]
		Hence it suffices to prove \re{Schur_int_1} and \re{Schur_int_2}. 
		We can write 
		\begin{align*} 
		\int_D |K(z,\zeta)| \, dV(\zeta) &= \int_D \left| \frac{\eta(z) + O'' (|z-\zeta|)}{\ov{\Phi}^{n+1}(\zeta,z) } - \frac{\eta(\zeta) + O'(|z - \zeta|)}{\Phi^{n+1}(z,\zeta) } \right| \, dV(\zeta) 
		\\ &\leq J_1+ J_2 + J_3 + J_4, 
		\end{align*}
		where  we denote 
		\begin{gather*} 
		J_1 =  \int_D \frac{|\eta(z) - \eta(\zeta)|}{|\Phi(\zeta,z)|^{n+1} } \, dV(\zeta),  
		\quad J_2 = \int_D  \eta(\zeta) \left| \frac{1}{\ov \Phi^{n+1} (\zeta,z)} - \frac{1}{\Phi^{n+1} (z,\zeta)} \right| \, dV(\zeta) , 
		\\ 
		J_3 = \int_D \frac{|O''(|z-\zeta|)|}{|\Phi(\zeta,z)|^{n+1} }  \, dV(\zeta), \quad 
		J_4 = \int_D \frac{|O'(|z-\zeta|)|}{|\Phi(z, \zeta)|^{n+1}}  \, dV(\zeta). 
		\end{gather*}
	 By the expression for $\eta$ \re{L1_Lig_exp}, we have $|\eta(z) - \eta(\zeta) | \lesssim |\rho|_3 |\zeta-z|$. We have  
		\begin{align*}  
		J_1 &\lesssim |\rho|_3 \int_D \frac{|\zeta-z|}{|\Phi( \zeta,z)|^{n+1} }  \, dV(\zeta)  
	 \lesssim |\rho|_3, 
		\end{align*}
  where we applied estimate \re{zeta-z_Phi_int_2} in the last inequality. 
		By estimates  \re{Phi_lbd}, \re{cancel_est} and \re{zeta-z_Phi_int_2}, we have 
		\begin{align*}   
		J_2 &\lesssim  |\rho|_2 \int_D \frac{|\Phi^{n+1} (z,\zeta) - \ov{\Phi}^{n+1} (\zeta, z) |}{|\Phi (\zeta, z)|^{n+1} |\Phi^{n+1} (z,\zeta)| } \, dV( \zeta)
		\\ &\lesssim |\rho |_2 \int_D |\Phi(z,\zeta) - \ov{\Phi(\zeta,z)}| \left(   \frac{|\Phi(z,\zeta)|^n + |\Phi(\zeta,z)|^n }{ |\Phi (\zeta, z)|^{n+1} |\Phi^{n+1} (z,\zeta)| }\right) \, dV(\zeta)
		\\ &\lesssim |\rho|_{3} \left( \int_D \frac{|\zeta-z|^{3} }{|\Phi(\zeta, z)|^{n+1} |\Phi(z,\zeta)|} \, dV(\zeta) + \int_D \frac{|\zeta-z|^{3} }{|\Phi(z, \zeta)|^{n+1} |\Phi(\zeta,z)|} \, dV(\zeta) \right) 
		\\ &\lesssim |\rho|_{3} \left(  \int_D \frac{|\zeta-z|}{|\Phi(\zeta,z)|^{n+1}} \, dV(\zeta) + \int_D \frac{|\zeta-z|}{|\Phi(z,\zeta )|^{n+1}} \, dV(\zeta)\right)   
	  \lesssim |\rho|_{3}. 
		\end{align*}
		For $J_3$, we use estimates \re{O'_est}, \re{O''_est} and \re{zeta-z_Phi_int_2}: 
		\begin{align*}   
		J_3 &\lesssim |\rho|_3  \int_D \frac{|\zeta-z|}{|\Phi( \zeta,z)|^{n+1}} \, dV (\zeta)  \lesssim |\rho|_3, 
		\\ 
		J_4 &\lesssim |\rho|_3  \int_D \frac{|\zeta-z|}{|\Phi( z,\zeta)|^{n+1}} \, dV (\zeta)  \lesssim |\rho|_3.   
		\end{align*}  
		Here we note that all the bounds are uniform in $z \in D$.  Hence we have proved \re{Schur_int_2}. 
		In a similar way by using estimate \re{zeta-z_Phi_int_4}, we can prove \re{Schur_int_1}.  The proof is now complete. 
	\end{proof} 
	By using \rp{Prop::Kopt_L2_bdd} and the same argument in \cite{Li84}, we obtain  
	\begin{prop} \label{KP_prop} 
Let $D$ be a strictly pseudoconvex domain in $\C^n$ with $C^3$ boundary, and let $\Lc, \Kc$ be the operators given by formula \re{Lc_def} \re{Kc_def}, respectively. Then the following statements are true. 
		\begin{enumerate}
			\item 
			$\Lc$ is a bounded projection from $L^2(D)$ to $H^2(D)$. In particular, $\Lc$ is the identity map on $H^2(D)$.  
			
			\item 
			$\Pc = \Lc(I-\Kc)^{-1} = (I + \Kc)^{-1} \Lc^\ast $. 
		\end{enumerate}
	\end{prop} 
	It is important to note that unlike the Bergman projection, $\Lc$ is not an orthogonal projection, namely, $\Lc g - g$ is not orthogonal to the Bergman space $H^2(D)$. 
	
	\begin{lemma} \label{Lem::Phi_zeta_der}  
		Let $D$ be a strictly pseudoconvex domain with $C^3$ boundary, and let $D_\del:= \{ z \in \C^n: \rho(z) < 0 \}$. 
		\begin{enumerate}[(i)] 
			\item  
			For all $(z, \zeta) \in D_\del \times D_\del$ with $|z-\zeta|$ sufficiently small, 
			\eq{F-rho_supp_est1}   
			\left| \sum_{i=1}^n \frac{\pa \Phi(z,\zeta)}{\pa \ov \zeta_i } \cdot \DD{\rho}{\zeta_i} \right| > c >0. 
			\eeq 
			\item For each $\zeta_0 \in bD$, there exists a neighborhood $U(\zeta_0)$ and an index $1 \leq j \leq n$ such that $\left| \DD{\rho}{\zeta_j} (\zeta) \right| > c >0$ for all $\zeta \in U(\zeta_0)$. In addition, 
			\eq{F-rho_supp_est2} 
			\DD{\Phi (z,\zeta) }{\zeta_j} \DD{\rho}{ \ov{\zeta_j }}
			- \DD{\Phi(z,\zeta) }{\ov \zeta_j} \DD{\rho}{\zeta_j}  > c' > 0, \quad \forall \:  (z,\zeta) \in U(\zeta_0) \times  U(\zeta_0).   
			\eeq  
			
			\item
			For all $(z, \zeta) \in D_\del \times D_\del$ with $|z-\zeta|$ sufficiently small, 
			\eq{F-rho_supp_est3}  
			\left| \sum_{i=1}^n \frac{\pa \ov {\Phi(\zeta,z)} }{\pa \ov \zeta_i} \cdot \frac{\pa \rho}{\pa \zeta_i} \right| > c > 0. 
			\eeq
			\item
			For each $\zeta_0 \in bD$, there exists a neighborhood $U(\zeta_0)$ and an index $1 \leq j \leq n$ such that $\left| \DD{\rho}{\zeta_j} (\zeta) \right| > c >0$ for all $\zeta \in U( \zeta_0)$. In addition, 
			\eq{F-rho_supp_est4}  
			\DD{\ov {\Phi(\zeta,z)} }{\zeta_j} \DD{\rho}{ \ov \zeta_j}
			- \DD{ \ov{\Phi(\zeta,z)} }{\ov \zeta_j} \DD{\rho}{\zeta_j}  > c' >0, \quad  \forall \:  (z,\zeta) \in U(\zeta_0) \times  U(\zeta_0).   
			\eeq
		\end{enumerate} 
		
	\end{lemma}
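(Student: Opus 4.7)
The proof reduces to careful Taylor expansions of $\Phi$ and $\ov{\Phi}$ in the regime where $|z-\zeta|$ is small. Since we may freely shrink $\del$ and the neighborhoods $U(\zeta_0)$, I work throughout in the regime $|z-\zeta| < 1/4$, where the cutoff $\chi$ in \re{G_def} is identically $1$ and $G(z,\zeta) = F(z,\zeta) + (1-|z-\zeta|)|z-\zeta|^2$. The ``nonholomorphic correction'' $(1-|z-\zeta|)|z-\zeta|^2$ vanishes to second order on the diagonal, so it contributes only $O(|z-\zeta|)$ terms to any first $\zeta$-derivative from this point on.

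Next I would compute the two leading expansions of $\Phi = G - \rho(\zeta)$ around the diagonal. From \re{F_def}, every term of $F$ other than the linear piece $\sum_k \partial_{\zeta_k}\rho \cdot (\zeta_k - z_k)$ already contains a factor of $(\zeta_k - z_k)$; differentiating in $\ov{\zeta_i}$ therefore yields only terms proportional to such factors, so
\begin{equation*}
\partial_{\ov{\zeta_i}}\Phi(z,\zeta) = -\partial_{\ov{\zeta_i}}\rho(\zeta) + O(|z-\zeta|).
\end{equation*}
For the holomorphic derivative, a direct computation from \re{F_def} gives $\partial_{\zeta_j} F(z,\zeta) = \partial_{\zeta_j}\rho(\zeta) + O(|z-\zeta|)$, where the linear correction in fact cancels out thanks to the symmetry of the complex Hessian; hence
\begin{equation*}
\partial_{\zeta_j}\Phi(z,\zeta) = O(|z-\zeta|).
\end{equation*}
Statements (i) and (ii) then follow by direct substitution. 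For (i), summing the first expansion against $\partial_{\zeta_i}\rho$ produces $-|\partial\rho(\zeta)|^2 + O(|z-\zeta|)$, which is bounded away from zero because $|\partial\rho| \geq c_0 > 0$ in a neighborhood of $bD$. For (ii), choose $j$ with $|\partial_{\zeta_j}\rho(\zeta_0)| > 0$ (possible since $\rho$ is a defining function); the ``Jacobian-type'' quantity then collapses to $|\partial_{\zeta_j}\rho|^2 + O(|z-\zeta|)$, which is positive on a sufficiently small $U(\zeta_0)$.

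Parts (iii) and (iv) follow from the analogous expansion for $\ov{\Phi(\zeta,z)} = \ov{G(\zeta,z)} - \rho(z)$. Applying the Wirtinger identity $\partial_{\ov{\zeta_i}} \ov{G(\zeta,z)} = \ov{\partial_{\zeta_i} G(\zeta,z)}$ reduces matters to $\partial_{\zeta_i} G(\zeta,z)$; since $G(\zeta,z)$ is now the Levi support function based at $z$, I expect $\partial_{\zeta_i} G(\zeta,z) = -\partial_{z_i}\rho(z) + O(|z-\zeta|)$, with the opposite sign from before because $\zeta$ is now the ``running'' variable. Conjugating and using the reality of $\rho$ yields $\partial_{\ov{\zeta_i}}\ov{\Phi(\zeta,z)} = -\partial_{\ov{z_i}}\rho(z) + O(|z-\zeta|)$, and the two lower bounds follow as in (i)--(ii) after noting that $\partial\rho(z)$ and $\partial\rho(\zeta)$ agree modulo $O(|z-\zeta|)$ for $\rho\in C^2$. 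The only real obstacle in the whole argument is careful bookkeeping of the conjugation signs and of the fact that in $G(\zeta,z)$ the base point is the second variable; once this is tracked, the entire statement is routine.
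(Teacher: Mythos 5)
Your proposal is correct and follows essentially the same route as the paper: direct differentiation of the Levi polynomial $F$ to get $\partial_{\ov{\zeta_i}}\Phi(z,\zeta) = -\partial_{\ov{\zeta_i}}\rho(\zeta) + O(|z-\zeta|)$ and $\partial_{\zeta_j}\Phi(z,\zeta) = O(|z-\zeta|)$ (via the Hessian-symmetry cancellation), conjugation for parts (iii)--(iv), and the observation $|D\rho(z)-D\rho(\zeta)|\lesssim |\rho|_2\,|z-\zeta|$ to swap base points. Your explicit treatment of the cutoff $\chi$ and the quadratic correction in $G$ is a minor tightening of what the paper leaves implicit, not a different argument.
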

	\begin{proof} 
		Compute  
		\begin{equation} \label{Phi_zetabar_der}   
		\begin{aligned}
		\DD{}{\ov \zeta_i} \left[ F(z,\zeta) - \rho(\zeta) \right] 
		&= \DD{}{\ov \zeta_i} \left( \sum_{j=1}^n \DD{\rho}{\zeta_j}(\zeta)(\zeta_j - z_j) - \yh \sum_{j,k=1}^n \frac{\pa^2 \rho}{\pa \zeta_j \pa \ov \zeta_k }(\zeta) (z_j - \zeta_j)(z_k - \zeta_k) - \rho(\zeta)  \right)
		\\ &= -\DD{\rho}{\ov \zeta_i} (\zeta) + O(|\zeta -z|). 
		\end{aligned}
		\end{equation} 
		Estimate \re{F-rho_supp_est1} then follows for $|z-\zeta|$ small since $|\na \rho (\zeta)| > 0$. \\ \\ 
		(ii) Since $d \rho(\zeta_0) \neq 0$, there exists some neighborhood $U(\zeta_0)$ and an index $i_0$ such that $ \left| \DD{\rho}{\zeta_{i_0} } (\zeta) \right| \geq c > 0$ for all $ \zeta \in U(\zeta_0)$. 
		We compute
		\begin{equation} \label{Phi_zeta_der} 
		\begin{aligned} 
		\DD{}{\zeta_{i_0} } \left[ F(z,\zeta) - \rho(\zeta) \right] 
		&= \DD{}{\zeta_{i_0}} \left( \sum_{j=1}^n \DD{\rho}{\zeta_j}(\zeta)(\zeta_j - z_j) -\yh \sum_{j,k=1}^n \frac{\pa^2 \rho}{\pa \zeta_j \pa \zeta_k}(\zeta) (z_j - \zeta_j)(z_k - \zeta_k) - \rho(\zeta) \right) 
		\\ &= \DD{\rho}{\zeta_{i_0}} (\zeta) -\DD{\rho}{\zeta_{i_0}} (\zeta) + O(|\zeta -z|)
		= O(|\zeta -z|). 
		\end{aligned}  
		\end{equation} 
		It follows from \re{Phi_zetabar_der} and \re{Phi_zeta_der} that  
		\begin{align*} 
		\DD{[F(z,\zeta) - \rho(\zeta)]}{\zeta_{i_0}} \DD{\rho}{ \ov \zeta_{i_0}}
		- \DD{[F(z,\zeta) - \rho(\zeta)]}{\ov \zeta_{i_0} } \DD{\rho}{\zeta_{i_0}} 
		= \left| \DD{\rho}{\zeta_{i_0}}(\zeta) \right|^2 + O (|\zeta-z|). 
		\end{align*}  
		Estimate \re{F-rho_supp_est2} then follows if $U(\zeta_0)$ is chosen sufficiently small. 
		\\ 
		(iii) The proof follows similarly by the fact: 
		\begin{equation} \label{Phibar_zetabar_der} 
		\begin{aligned} 
		\DD{}{\ov \zeta_i} \left[ \ov{F(\zeta,z)} - \rho(z) \right] 
		&= \DD{}{\ov \zeta_i} \left( \sum_{j=1}^n \DD{\rho}{\ov z_j}(z) \ov{(z_j - \zeta_j)} - \sum_{j,k=1}^n  \frac{\pa^2 \rho}{\pa \ov z_j \pa \ov z_k}(z) \ov{(\zeta_j - z_j)(\zeta_k - z_k)} - \rho(z) \right) 
		\\ &= - \DD{\rho}{\ov z_i}(z) + O(|\zeta -z|)
		\\ &= - \DD{\rho}{\ov \zeta_i} (\zeta) + O(|\zeta -z|), 
		\end{aligned} 
		\end{equation} 
		where in the last equality we used that $|D \rho(z) - D \rho(\zeta)| \lesssim |\rho|_2 |\zeta-z|$. 
		\\ \\  
		(iv) Since $d \rho(\zeta_0) \neq 0$, there exists some neighborhood $U(\zeta_0)$ and an index $i_0$ such that $ \left| \DD{\rho}{\zeta_{i_0} } (\zeta) \right| \geq c > 0$ for all $ \zeta \in U(\zeta_0)$. Compute
		\begin{align} \label{Phibar_zeta_der}
		\DD{}{\zeta_{i_0}} \left[ \ov{F(\zeta,z)} - \rho(z) \right] 
		&= \DD{}{\zeta_{i_0}} \left( \sum_{j=1}^n \DD{\rho}{\ov z_j}(z) \ov{(z_j - \zeta_j)} - \yh \sum_{j,k=1}^n \frac{\pa^2 \rho}{\pa z_j \pa \ov z_k } (z) \ov{(\zeta_j - z_j)(\zeta_k - z_k)} - \rho(z) \right) 
		=0.
		\end{align}
		It follows from \re{Phibar_zetabar_der} and \re{Phibar_zeta_der} that 
		\begin{align*}
		\DD{ [\ov{F(\zeta,z)} - \rho(z)] }{\zeta_{i_0}} \DD{\rho}{ \ov \zeta_{i_0}}
		- \DD{[\ov{F(\zeta,z)} - \rho(z)] }{\ov \zeta_{i_0}} \DD{\rho}{\zeta_{i_0}} 
		= \left| \DD{\rho}{\zeta_{i_0}}(\zeta) \right|^2 + O (|\zeta-z|).  
		\end{align*} 
		Hence estimate \re{F-rho_supp_est4} holds by choosing $U(\zeta_0)$ sufficiently small.  
	\end{proof} 
	\begin{lemma} \label{Lem::Phi_z_der}  
		Let $D$ be a bounded strictly pseudoconvex domain with a $C^3$ defining function $\rho$, and let $F(z,\zeta)$ be given by \re{F_def}. 
		\begin{enumerate} [(i)]
			\item For each $1 \leq i \leq n$, the following holds for $(z,\zeta) \in D \times D$, 
			\[
			\DD{[F(z,\zeta) - \rho(\zeta)]}{z_i} 
			= - \DD{\rho}{\zeta_i}(\zeta) + O(|\zeta -z|)  ,
			\quad \DD{[F(z,\zeta) - \rho(\zeta)]}{\ov z_i} = O(|\zeta-z|), 
			\] 
			where $|O(|\zeta-z|)| \lesssim |\rho|_2 |\zeta-z| $. 
			\item
			For each $1 \leq i \leq n$, the following holds for $(z,\zeta) \in D_\del \times D_\del$, 
			\[
			\DD{[\ov{F(\zeta,z)} - \rho(z)]}{z_i} 
			= - \DD{\rho}{z_i}(z) + O(|\zeta -z|), \quad 
			\DD{[\ov{F(\zeta,z)} - \rho(z)]}{\ov z_i} = O(|\zeta -z|), 
			\]  
			where $|O(|\zeta-z|)| \lesssim |\rho|_3 |\zeta-z| $. 
		\end{enumerate}
		
	\end{lemma}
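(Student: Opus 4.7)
The plan is to verify both parts by direct computation, exploiting the fact that $F(z,\zeta)$, given by \re{F_def}, is a polynomial in $z$ of degree $2$ and applying Wirtinger calculus. The argument parallels the $\zeta$-derivative computations already carried out in \rl{Lem::Phi_zeta_der}, so no deeper machinery is involved.

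Part (i) is essentially immediate. Since the coefficients $\DD{\rho}{\zeta_j}(\zeta)$ and $\frac{\pa^2\rho}{\pa\zeta_i\pa\zeta_j}(\zeta)$ in \re{F_def} depend only on $\zeta$, $F(z,\zeta)$ is polynomial (hence holomorphic) in $z$, and $\rho(\zeta)$ has no $z$-dependence. Thus $\DD{[F(z,\zeta)-\rho(\zeta)]}{\ov{z_i}} \equiv 0$, which is in particular $O(|\zeta-z|)$. For $\DD{}{z_i}$, differentiating the explicit polynomial yields $-\DD{\rho}{\zeta_i}(\zeta)$ from the linear term, plus a linear combination of $\frac{\pa^2\rho}{\pa\zeta_i\pa\zeta_j}(\zeta)(z_j-\zeta_j)$ from the quadratic term, the latter bounded by $|\rho|_2|\zeta-z|$.

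Part (ii) is the main content. After conjugating and swapping arguments,
\[
\ov{F(\zeta,z)} = \sum_j \DD{\rho}{\ov{z_j}}(z)(\ov{z_j}-\ov{\zeta_j}) - \yh \sum_{i,j}\frac{\pa^2\rho}{\pa\ov{z_i}\pa\ov{z_j}}(z)(\ov{\zeta_i}-\ov{z_i})(\ov{\zeta_j}-\ov{z_j}),
\]
so $\ov{F(\zeta,z)}$ depends on $z$ through both the coefficients and the antiholomorphic difference factors. I will differentiate term by term and then subtract $\DD{\rho}{z_i}(z)$ (respectively $\DD{\rho}{\ov{z_i}}(z)$). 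For $\pa/\pa z_i$ only the coefficients contribute (the difference factors are antiholomorphic in $z$), producing a term $\sum_j\frac{\pa^2\rho}{\pa z_i\pa\ov{z_j}}(z)(\ov{z_j}-\ov{\zeta_j})$ of size $|\rho|_2|\zeta-z|$, plus a quadratic-in-$|\zeta-z|$ remainder from $\pa_{z_i}$ of the Hessian coefficient bounded by $|\rho|_3|\zeta-z|^2$; subtracting $\DD{\rho}{z_i}(z)$ gives the first identity in (ii).

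For $\pa/\pa\ov{z_i}$, the key is a twofold cancellation. First, the $\DD{\rho}{\ov{z_i}}(z)$ produced by differentiating the factor $(\ov{z_i}-\ov{\zeta_i})$ in the linear sum is precisely the quantity subtracted off by $-\rho(z)$. Second, the two Hessian-type sums that arise, one from $\pa/\pa\ov{z_i}$ hitting the coefficient $\DD{\rho}{\ov{z_j}}(z)$ in the linear part and the other from hitting the difference factors in the quadratic part, combine to zero by the symmetry of $\frac{\pa^2\rho}{\pa\ov{z_i}\pa\ov{z_j}}$. What remains is a cubic residual $-\yh\sum_{i,j}\frac{\pa^3\rho}{\pa\ov{z_i}\pa\ov{z_j}\pa\ov{z_k}}(z)(\ov{\zeta_j}-\ov{z_j})(\ov{\zeta_k}-\ov{z_k})$ of size $|\rho|_3|\zeta-z|^2 \subset O(|\zeta-z|)$. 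This symmetry-based cancellation is the only nontrivial step, and it mirrors the computation \re{Phibar_zetabar_der} in \rl{Lem::Phi_zeta_der}; otherwise the proof is routine and presents no real obstacle.
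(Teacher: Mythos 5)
Your proof is correct and follows essentially the same direct Wirtinger-calculus computation as the paper's. The only difference is that you track the exact symmetry cancellation of the two Hessian sums in the $\pa_{\ov{z_i}}$ computation of part (ii) (yielding the sharper bound $O(|\zeta-z|^2)$), whereas the paper simply absorbs each of those sums individually into the $O(|\zeta-z|)$ term, which already suffices for the stated estimate.
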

	\begin{proof}
		(i) Using definiton of $F$, we have 
		\begin{align*}
		\DD{[F(z,\zeta) - \rho(\zeta)] }{z_i} 
		&= \DD{}{z_i} \left( \sum_{j=1}^n \DD{\rho}{\zeta_j}(\zeta)(\zeta_j - z_j) - \yh \sum_{j,k=1}^n \frac{\pa^2 \rho}{\pa \zeta_j \pa \zeta_k}(\zeta) (z_j - \zeta_j)(z_k - \zeta_k) - \rho(\zeta) \right)
		\\&= - \DD{\rho}{\zeta_i}(\zeta) + O(|\zeta -z|), \quad |O(|\zeta -z|)| \lesssim |\rho|_2 |\zeta-z|, 
		\end{align*}
		and $   \DD{[F(z,\zeta) - \rho(\zeta)]}{\ov z_i} = O(|\zeta-z|)$.  
		\\ 
		(ii) 
		\begin{align*}
		\DD{[\ov{F(\zeta,z)} - \rho(z)]}{z_i} 
		&= \DD{}{z_i} \left( \sum_{j=1}^n \DD{\rho}{\ov z_j}(z) \ov{(z_j - \zeta_j)} - \yh \sum_{j,k=1}^n \frac{\pa^2 \rho}{\pa z_j \pa z_k}(z) \ov{(\zeta_j - z_j)(\zeta_k - z_k)} - \rho(z) \right)
		\\&= - \DD{\rho}{z_i}(z) + O(|\zeta -z|), \quad |O(|\zeta -z|)| \lesssim |\rho|_3 |\zeta-z|, 
		\end{align*}
		and 
		\[ 
		\DD{[\ov{F(\zeta,z)} - \rho(z)] }{\ov z_i} = 
		\DD{\rho}{\ov z_i} - \DD{\rho}{\ov z_i} + O(|\zeta-z|) = O(|\zeta-z|).  \qedhere
		\] 
	\end{proof}
	
	We use the notation: 
	\begin{gather} \label{Q'Q''}
	Q'(z,\zeta) := \sum_{i=1}^n \frac{\pa \Phi(z,\zeta)}{\pa \ov \zeta_i} \DD{\rho}{\zeta_i}, 
	\quad Q''(\zeta,z) := \sum_{i=1}^n \frac{\pa \ov{\Phi(\zeta,z)} }{\pa \ov \zeta_i} \cdot \frac{\pa \rho}{\pa \zeta_i}, 
	\end{gather}
	and we write
	\[
	\left[ d \ov \zeta \right]_i = d \ov{\zeta_1} \we \cdots \we \hht{( d \ov \zeta_i)} \we \cdots 
	\we d \ov{\zeta_n}; 
	\quad 
	\left[ d \zeta \right]_i = d \zeta_1 \we \cdots \we \hht{( d \zeta_i )} \we \cdots 
	\we d \zeta_n. 
	\] 
	\begin{lemma} \label{Lem::Phi_grad_diff}
		For all $(z,\zeta) \in D_\del \times D_\del$ with $|z-\zeta| $ sufficiently small, the following estimates hold
		\begin{enumerate}[(i)] 
			\item 
			\[ 
			|D_z \Phi(z,\zeta) - D_z \ov{\Phi(\zeta, z)}| \lesssim |\rho|_3 |\zeta -z|. 
			\] 
			\item 
			\[ 
			|D_\zeta \Phi(z,\zeta) - D_\zeta \ov{\Phi(\zeta, z)}| \lesssim |\rho|_3 |\zeta -z| . 
			\]
			\item 
			\[
			|Q'(z,\zeta) - Q''(\zeta,z)| \lesssim |\rho|_3 |\zeta-z|. 
			\]
		\end{enumerate}
	\end{lemma}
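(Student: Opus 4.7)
The plan is to reduce everything to the computations already recorded in Lemmas \ref{Lem::Phi_zeta_der} and \ref{Lem::Phi_z_der}. Since we restrict to $|z-\zeta|$ small, the cutoff $\chi(|z-\zeta|)$ is identically $1$, so $G(z,\zeta) = F(z,\zeta)$ and hence $\Phi(z,\zeta) = F(z,\zeta) - \rho(\zeta)$, $\ov{\Phi(\zeta,z)} = \ov{F(\zeta,z)} - \rho(z)$. All derivatives of $\Phi$ and $\ov{\Phi}$ are therefore covered by those lemmas.

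For (i), Lemma \ref{Lem::Phi_z_der} gives $\pa_{z_i}\Phi(z,\zeta) = -\pa_{\zeta_i}\rho(\zeta) + O(|\rho|_2|\zeta-z|)$ while $\pa_{z_i}\ov{\Phi(\zeta,z)} = -\pa_{z_i}\rho(z) + O(|\rho|_3|\zeta-z|)$; the difference of the principal parts is $|\pa_{z_i}\rho(z) - \pa_{\zeta_i}\rho(\zeta)| \lesssim |\rho|_2|\zeta-z|$ by the mean value theorem. The $\ov{z_i}$-derivatives are both of size $O(|\rho|_3|\zeta-z|)$, so the same bound is immediate.

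For (ii), I will invoke the explicit computations \eqref{Phi_zetabar_der}, \eqref{Phi_zeta_der}, \eqref{Phibar_zetabar_der}, \eqref{Phibar_zeta_der} inside the proof of Lemma \ref{Lem::Phi_zeta_der}. The $\pa_{\ov{\zeta_i}}$-derivative of $\Phi(z,\zeta)$ equals $-\pa_{\ov{\zeta_i}}\rho(\zeta)+O(|\zeta-z|)$, and by \eqref{Phibar_zetabar_der} the corresponding derivative of $\ov{\Phi(\zeta,z)}$ equals $-\pa_{\ov{\zeta_i}}\rho(\zeta)+O(|\zeta-z|)$ as well (the last equality there uses $|D\rho(z)-D\rho(\zeta)|\lesssim|\rho|_2|\zeta-z|$). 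The leading terms cancel and the remainder is $O(|\rho|_3|\zeta-z|)$. For the $\pa_{\zeta_i}$-derivatives, \eqref{Phi_zeta_der} gives $O(|\rho|_3|\zeta-z|)$ and \eqref{Phibar_zeta_der} gives exactly $0$, so the difference is again $O(|\rho|_3|\zeta-z|)$.

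Finally (iii) follows by writing
\[
Q'(z,\zeta) - Q''(\zeta,z) = \sum_{i=1}^n \left( \DD{\Phi(z,\zeta)}{\ov{\zeta_i}} - \DD{\ov{\Phi(\zeta,z)}}{\ov{\zeta_i}} \right) \DD{\rho}{\zeta_i},
\]
applying (ii) to each factor in parentheses, and using that $|\pa\rho/\pa\zeta_i|\lesssim |\rho|_1$ is uniformly bounded. No serious obstacle is expected; the only point requiring a bit of care is matching the base points of $D\rho$ (at $\zeta$ versus at $z$) when comparing $\Phi$ and $\ov{\Phi}$, which is handled by a single application of $|D\rho(z)-D\rho(\zeta)|\lesssim|\rho|_2|\zeta-z|$ and is the reason the regularity assumption $\rho \in C^3$ (giving $|\rho|_3$ as the correct constant to absorb all remainders) suffices.
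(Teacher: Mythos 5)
Your proof is correct and follows exactly the route the paper takes: the paper's own proof of this lemma is the single sentence that it ``follows immediately from the proofs of Lemmas \ref{Lem::Phi_zeta_der} and \ref{Lem::Phi_z_der},'' and your write-up simply makes explicit the cancellations of the leading terms (via $|D\rho(z)-D\rho(\zeta)|\lesssim|\rho|_2|\zeta-z|$) and the reduction of (iii) to (ii) that the author leaves implicit.
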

	\begin{proof}
		This follows immediately from the proof of \rl{Lem::Phi_zeta_der} and \rl{Lem::Phi_z_der}. 
	\end{proof}
	
	We now prove the key integration by parts lemma. This technique was originated by Elgueta \cite{Elg80} and has been developed and used by Ahern and Schneider \cite{A-S79}, Ligocka \cite{Li84}, Lieb-Range \cite{L-R80}, Gong \cite{Gong19}, among others. For our proof we shall mainly follow \cite{A-S79}. We mention that integration by parts is not needed for our results with $C^{3+\all}$ boundary, and that in the subsequent proof the following lemma will only be applied to domains with $C^{k+3+\all}$ boundary, $k \geq 1$.  
	\begin{lemma} \label{Lem::ibp} 
		Let $D$ be a bounded strictly pseudoconvex domain in $\C^n$ with $C^4$ boundary. Suppose $u \in C^1(\ov D)$ and the support of $u$ is contained in some small neighborhood of $z$.  Then the following integration by parts formulae hold
		\begin{enumerate}[(i)] 
			\item 
			\eq{ibp_1}
			\int_D \frac{ u(\zeta) \, dV(\zeta)}{\Phi^{m+1}(z,\zeta)} \, = c_1' \int_{bD} \frac{P'(u)(\zeta) d \si(\zeta)}{\Phi^{m-1}(z,\zeta) }  + c_2'
			\int_D \sum_{i=1}^n \DD{}{\ov \zeta_i}\left( \frac{u (\zeta) \DD{\rho}{\zeta_i}(\zeta) }{Q'(z,\zeta)} \right)  \frac{dV(\zeta)}{\Phi^m(z,\zeta)}. 
			\eeq  
			Here $P'$ is a first order differential operator in $\zeta$ variable (see \re{P'_formula}). 
			\item 
			\eq{ibp_2} 
			\int_D \frac{u(\zeta) dV (\zeta)}{\ov{ \Phi}^{m+1}(\zeta, z) } 
			= c_1'' \int_{b D} \frac{P''(u)(\zeta) \, d\si (\zeta)}{\ov \Phi^{m-1}(\zeta,z) }  
			+ c_2'' \int_D \sum_{i=1}^n \DD{}{\ov \zeta_i}\left( \frac{u (\zeta) \DD{\rho}{\zeta_i}(\zeta) }{Q''(\zeta,z)} \right)  \frac{dV(\zeta)}{\ov{\Phi}^m(\zeta, z)} . 
			\eeq
			Here $P''$ is a first order differential operator in $\zeta$ variable (see \re{P''_formula}). 
			\item 
			\eq{ibp_bdy}   
			\int_{bD} \frac{u(\zeta)  d \si(\zeta)}{\Phi^m(z,\zeta)} =  \int_{bD} \frac{P'(u)(\zeta)  d \si(\zeta)}{\Phi^{m-1}(z,\zeta)}, 
			\quad 
			\int_{bD} \frac{u(\zeta)  d \si(\zeta)}{\Phi^m(\zeta, z)} =  \int_{bD} \frac{P''(u)(\zeta)  d \si(\zeta)}{\Phi^{m-1}(\zeta, z)}. 
			\eeq
			Here $P',P''$ are first order differential operators in $\zeta$. The coefficients of $P'$ (resp.$P''$) involve derivatives of $\rho$ up to order $3$ (resp. order $2$).    
		\end{enumerate}
		
	\end{lemma}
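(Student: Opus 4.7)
The proof rests on two algebraic identities. First, for $\zeta$ in a small neighborhood of $z$ so that $Q'(z,\zeta)$ is bounded below by \rl{Lem::Phi_zeta_der}(i), one has
\[
\frac{1}{\Phi^{m+1}(z,\zeta)} \;=\; -\frac{1}{m\,Q'(z,\zeta)}\sum_{i=1}^n \DD{\rho}{\zeta_i}(\zeta)\, \DD{}{\ov{\zeta_i}}\frac{1}{\Phi^m(z,\zeta)},
\]
which rewrites the integrand as a $\ov\partial_\zeta$-exact form contracted against the smooth vector field $\DD{\rho}{\zeta_i}/Q'$. Second, on $bD$ the tangential operators $\ov L_{ij}:=\DD{\rho}{\zeta_i}\DD{}{\ov{\zeta_j}}-\DD{\rho}{\zeta_j}\DD{}{\ov{\zeta_i}}$ kill $\rho$, and \re{F-rho_supp_est2} (together with \re{F-rho_supp_est4}) gives, locally, a single pair $(i,j)$ for which the combination $\ov L_{ij}(1/\Phi^{m-1})=-(m-1)\Phi^{-m}\ov L_{ij}\Phi$ is bounded below in absolute value, so $\Phi^{-m}|_{bD}$ may be written as a linear combination of such $\ov L_{ij}(\Phi^{-(m-1)})$ with smooth coefficients.

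\paragraph{Step 1: prove (iii).} I would first establish the boundary identities. Take a partition of unity on $bD$ subordinate to the neighborhoods $U(\zeta_0)$ produced by \rl{Lem::Phi_zeta_der}(ii),(iv). On each patch, invert the nonvanishing combination in \re{F-rho_supp_est2} to express $1/\Phi^m$ as a sum $\sum a_{ij}\,\ov L_{ij}(1/\Phi^{m-1})$ with $a_{ij}$ smooth and of bounded support. Because $bD$ is a closed manifold and $\ov L_{ij}$ is tangential, Stokes' theorem on the boundary has no boundary-of-boundary contribution, so one integrates by parts to transfer $\ov L_{ij}$ onto $u\,a_{ij}$, yielding a first-order operator $P'(u)$. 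The same procedure using \re{F-rho_supp_est4} produces $P''(u)$ for the $\ov\Phi$ identity.

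\paragraph{Step 2: prove (i) and (ii).} For (i), substitute the first algebraic identity above into $\int_D u\,\Phi^{-m-1}\,dV$ and recast the resulting integrand $\sum_i (u\DD{\rho}{\zeta_i}/Q')\,\DD{}{\ov{\zeta_i}}\Phi^{-m}$ as $\ov\partial_\zeta$ applied to the $(n,n-1)$-form $c_n\sum_i (-1)^{i-1} (u\DD{\rho}{\zeta_i}/Q')\,\Phi^{-m}\, d\zeta\wedge [d\ov\zeta]_i$. Stokes on $D$ then gives a volume term
\[
\frac{1}{m}\int_D \sum_{i=1}^n \DD{}{\ov{\zeta_i}}\!\left(\frac{u\,\DD{\rho}{\zeta_i}}{Q'}\right)\frac{dV(\zeta)}{\Phi^m(z,\zeta)}
\]
and a boundary term whose density is proportional to $\sum_i(\partial\rho/\partial\zeta_i)\nu_i\sim|\nabla\rho|^2/|\nabla\rho|$ (bounded below), living at level $\Phi^{-m}$. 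A single application of (iii) to this boundary term then reduces it to $\int_{bD} P'(u)\Phi^{-(m-1)}d\sigma$, and by absorbing all first-order differentiation on $u$ into one operator $P'$ one obtains (i). Identity (ii) is identical in structure after swapping $(\Phi,Q')\leftrightarrow(\ov\Phi,Q'')$ and invoking \rl{Lem::Phi_zeta_der}(iii),(iv) together with \rl{Lem::Phi_z_der}(ii).

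\paragraph{Expected obstacle.} The delicate point is the derivative count of $\rho$ claimed in the last sentence. Because $\Phi(z,\zeta)=G(z,\zeta)-\rho(\zeta)$ carries the full $2$-jet of $\rho$ at $\zeta$, the denominator factor $Q'(z,\zeta)$ depends on $D^2\rho(\zeta)$, and differentiating $1/Q'$ once in $\ov\zeta$ exposes $D^3\rho(\zeta)$; hence $P'$ involves derivatives of $\rho$ up to order $3$. By contrast $\ov\Phi(\zeta,z)$ is antiholomorphic in $\zeta$ with coefficients only depending on the $2$-jet of $\rho$ at $z$, so $\ov\zeta$-differentiation of $Q''(\zeta,z)$ hits only the $\DD{\rho}{\zeta_i}(\zeta)$-factors, producing at most $D^2\rho(\zeta)$. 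Verifying this asymmetry carefully (and checking that the partition-of-unity patching does not secretly raise the derivative count, which requires $bD\in C^4$ to safely carry the $C^3$ coefficients of $P'$) is the main bookkeeping hurdle; everything else is routine Stokes and algebraic manipulation.
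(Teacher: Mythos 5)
Your proof follows essentially the same route as the paper's: one application of Stokes' theorem in $D$ based on the identity $\Phi^{-(m+1)}=-(mQ')^{-1}\sum_i \DD{\rho}{\zeta_i}\DD{}{\ov{\zeta_i}}\Phi^{-m}$ (and its $Q''$ analogue), followed by a tangential integration by parts on the closed manifold $bD$ using a partition of unity and the lower bounds \re{F-rho_supp_est2}, \re{F-rho_supp_est4}, which is exactly the content of the paper's computations \re{ibp_1st_time} and \re{bdy_diff_form1}. The only slip is your definition of $\ov L_{ij}$, which is antisymmetric in $(i,j)$ and does not annihilate $\rho$ as written; the tangential operator you actually need — and the one encoded by the paper's coefficient $a_i$ in \re{a_i} — is the single-index combination $\DD{\rho}{\ov{\zeta_j}}\DD{}{\zeta_j}-\DD{\rho}{\zeta_j}\DD{}{\ov{\zeta_j}}$, whose action on $\Phi$ is precisely the quantity bounded below in \re{F-rho_supp_est2}.
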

	\begin{proof} 
		In view of \re{G_def} and \re{G-rho_lbd_2}, for each fixed $z \in D$ we have $\Phi(z,\cdot) \in C^1(\ov D)$. By \re{Q'Q''}, and the assumption that $\rho \in C^3$, we see that $Q',Q'' \in C^1(\ov D)$. Hence by Stokes' theorem,
		\begin{equation} \label{ibp_1st_time}
		\begin{aligned} 
		\int_D \frac{ u(\zeta)}{\Phi^{m+1}(z,\zeta)} \, dV(\zeta) 
		&= - \frac{1}{m} \int_{b D} \frac{u(\zeta)}{Q'(z,\zeta) \Phi^m(z,\zeta)} \sum_{k=1}^n (-1)^{k-1} \DD{\rho}{\zeta_k} [d \ov \zeta]_k \we d \zeta 
		\\ &\quad + \frac{1}{m} \int_D \sum_{k=1}^n \DD{}{\ov \zeta_k}\left( \frac{u (\zeta) \DD{\rho}{\zeta_k}(\zeta) }{Q'(z,\zeta)} \right)  \frac{1}{\Phi^m(z,\zeta)} \, dV(\zeta). 
		\end{aligned} 
		\end{equation} 
		To finish the proof we need to apply Stokes' theorem again to the boundary integral. We have on $bD$, 
		\eq{bdy_diff_form} 
		d \rho (\zeta) = \sum_{l=1}^n \left(
		\DD{\rho}{\zeta_l} d \zeta_l 
		+ \DD{\rho}{\ov \zeta_l} d \ov{\zeta}_l \right) \equiv 0. 
		\eeq 
		Let $\{ \chi_\nu \}_{\nu=1}^M$ be a partition of unity of $bD$ subordinate to the cover $\{ U_\nu \}_{\nu=1}^M$. We can assume that on $U_\nu$, there exists an index $i = i(\nu)$ such that $\DD{\rho}{\zeta_{i(\nu)}}(\zeta) \neq 0$. 
		By \re{bdy_diff_form}, we have for $ \zeta \in U_\nu \cap bD$:  
		\begin{align} \label{bdy_diff_form1}   
		d_\zeta \left( \Phi^{-(m-1)} [ d \zeta ]_i \we [ d \ov \zeta ]_i \right) 
		&= - (m-1) \Phi^{-m} \left( \sum_{l=1}^n \DD{\Phi(z,\zeta)}{\zeta_l} d \zeta_l + \DD{\Phi(z,\zeta)}{\ov\zeta_l } d \ov{\zeta}_l \right) \we [ d \zeta ]_i \we [ d \ov \zeta ]_i
		\\ \nn &= -(m-1) \Phi^{-m} \left[ \DD{\Phi(z,\zeta)}{\zeta_i} 
		- \DD{\rho}{\zeta_i} \left( \DD{\rho}{\ov \zeta_i} \right)^{-1} \DD{\Phi(z,\zeta)}{\ov \zeta_i}  
		\right] d \zeta_i \we [d \zeta]_i \we [d \ov \zeta]_i
		\\ \nn &= -(m-1) \Phi^{-m} (-1)^{i -1} a_i(z,\zeta) d \zeta \we [d \ov \zeta]_i, \quad i=i(\nu), 
		\end{align}
		where we set 
		\eq{a_i}
		a_i (z,\zeta) := \DD{\Phi(z,\zeta)}{\zeta_i} 
		- \DD{\rho}{\zeta_i} \left( \DD{\rho}{\ov \zeta_i} \right)^{-1} \DD{\Phi(z,\zeta)}{\ov \zeta_i}, \quad \zeta \in U_\nu \cap bD.  
		\eeq  
		By assumption, $u$ is supported in a small neighorhood of $z$. Hence if for some $\nu$, $\supp u \cap U_\nu$ is non-empty, then $z$ must be sufficiently close to $U_\nu$. Hence in view of estimate \re{F-rho_supp_est2} and by shrinking $U_\nu$ if necessary, we can assume that $a_i(z,\zeta) \geq c >0$ for $\zeta \in \supp u \cap U_\nu$.   
		Accordingly, 
		\[
		\Phi^{-m } d \zeta \we [d \ov \zeta]_{i} 
		= c_m \frac{(-1)^{i-1} }{a_{i}(z,\zeta)}  d_\zeta \left( \Phi^{-(m-1)} [d \zeta]_{i} \we [d \ov \zeta ]_{i} \right), \quad i = i(\nu), \quad \zeta \in U_\nu \cap bD. 
		\]
		Now by \re{bdy_diff_form} we can write 
		\eq{Unu_vol_form} 
		\sum_{k=1}^n (-1)^{k-1} \DD{\rho}{\zeta_k} d \zeta \we [d \ov \zeta]_k = \var_\nu(\zeta) d \zeta \we [d \ov \zeta]_{i(\nu)}, \quad \zeta \in U_\nu \cap bD, 
		\eeq 
		where $\var_\nu$ is a linear combination of products of $\DD{\rho}{\zeta_s}$ and  $\DD{\rho}{\ov{\zeta}_t}$. 
		Hence for the boundary integral in \re{ibp_1st_time} we have 
		\begin{align*}
		&\int_{b D} \frac{u(\zeta) \chi_\nu (\zeta) }{Q'(z,\zeta) \Phi^m(z,\zeta)} \sum_{k=1}^n (-1)^{k-1} \DD{\rho}{\zeta_k} d \zeta \we [d \ov \zeta]_k 
		= \int_{b D} \frac{u(\zeta) \chi_\nu (\zeta)  \var_\nu(\zeta)  }{Q'(z,\zeta)   \Phi^m(z,\zeta)} \, d \zeta \we [d \ov \zeta]_{i(\nu)}
		\\ \quad &= \int_{bD} \frac{u(\zeta) \chi_\nu (\zeta) \var_\nu(\zeta) }{(Q'a_{i(\nu)} )(z,\zeta)} d_\zeta(\Phi^{-(m-1)} [d \zeta]_{i(\nu)} \we [d \ov \zeta]_{i(\nu)} ), 
		\end{align*} 
		where the constant is absorbed into $\var_\nu$. 
		By Stokes' theorem, the integral is equal to 
		\begin{align*}
		\int_{bD} d_\zeta \left(  \frac{u(\zeta) (\chi_\nu \var_\nu)(\zeta)}{(Q'a_{i(\nu)} )(z,\zeta)} \right) \Phi^{-(m-1)} [d \zeta]_{i(\nu)} \we [d \ov \zeta]_{i(\nu)}. 
		\end{align*}  
		Let $\psi_\nu$ be the function such that $ [d \zeta]_{i(\nu)} \we [d \ov \zeta]_{i(\nu)} = \psi_\nu(\zeta) d \si (\zeta) $.  Summing the above expression over $\nu$, the boundary integral in \re{ibp_1st_time} can be written as 
		\eq{P'_formula}
		\int_{bD} P'(u)(\zeta) \Phi^{-(m-1)} \, d\si(\zeta), \quad 
		P'(u)(\zeta):= \sum_{\nu=1}^M d_\zeta \left(  \frac{u(\zeta) (\chi_\nu \var_\nu)(\zeta)}{(Q'a_{i(\nu)} )(z,\zeta)} \right) \psi_\nu(\zeta).  
		\eeq
		Hence we obtain formula \re{ibp_1}. This completes the proof of (i). 
		
		The proof of (ii) goes similar. By Stokes' theorem we have 
		\begin{equation} \label{ibp_1st_time_2} 
		\begin{aligned}  
		\int_D \frac{u(\zeta) }{\ov{ \Phi}^{m+1}(\zeta, z) } \, dV (\zeta)
		&= - \frac{1}{m} \int_{b D} \frac{u(\zeta)}{Q''(\zeta,z) \ov{\Phi}^m(\zeta, z)} \sum_{k=1}^n (-1)^{k-1} \DD{\rho}{\zeta_k} \left[ d \ov \zeta \right]_k  \we d \zeta
		\\ &\quad + \frac{1}{m} \int_D \sum_{k=1}^n \DD{}{\ov \zeta_k}\left( \frac{u (\zeta) \DD{\rho}{\zeta_k}(\zeta) }{Q''(\zeta,z)} \right)  \frac{1}{\ov{\Phi}^m(\zeta, z)} \, dV(\zeta). 
		\end{aligned}  
		\end{equation} 
		Let $\chi_\nu$, $U_\nu$ and $i(\nu)$ be the same as in the proof of (i). By \re{bdy_diff_form}, we have for $\zeta \in U_\nu \cap bD$:  
		\begin{equation} \label{bdy_diff_form2} 
		\begin{aligned} 
		d_\zeta \ov \Phi^{-(m-1)}(\zeta,z) [ d \zeta ]_i \we [ d \ov \zeta ]_i
		&= - m \ov \Phi^{-m}(\zeta,z) \left( \DD{\ov \Phi(\zeta,z)}{\zeta_i} d \zeta_i + \DD{\ov \Phi(\zeta,z)}{\ov\zeta_i } d \ov{\zeta}_i \right) \we [ d \zeta ]_i \we [ d \ov \zeta ]_i
		\\ &= -m \ov \Phi^{-m}(\zeta,z) \left[  \DD{\ov \Phi(\zeta,z)}{\zeta_i} 
		- \DD{\rho}{\zeta_i} \left( \DD{\rho}{\ov \zeta_i} \right)^{-1} \DD{\ov \Phi(\zeta,z)}{\ov \zeta_i}  
		\right] d \zeta_i \we [d \zeta]_i \we [d \ov \zeta]_i
		\\ &= -m \ov \Phi^{-m}(\zeta,z) (-1)^{i -1} b_i(\zeta,z) d \zeta \we [d \ov \zeta]_i, \quad i = i(\nu), 
		\end{aligned}
		\end{equation} 
		where we set
		\eq{b_i}
		b_i (\zeta,z) := \DD{\ov \Phi(\zeta,z)}{\zeta_i} 
		- \DD{\rho}{\zeta_i} \left( \DD{\rho}{\ov \zeta_i} \right)^{-1} \DD{\ov \Phi(\zeta,z)}{\ov \zeta_i}, \quad \zeta \in U_\nu \cap bD. 
		\eeq 
		Using estimate \re{F-rho_supp_est4}, we may assume that  $b_i \geq c > 0$ for $ \zeta \in \supp u \cap U_\nu$. 
		It follows that 
		\[
		\ov \Phi^{-m}(\zeta,z) d \zeta \we [d \ov \zeta]_i = c_m \frac{(-1)^{i-1}}{b_i(\zeta,z)}  d_\zeta \left( \ov \Phi^{-(m-1)}(\zeta,z) [ d \zeta ]_i \we [ d \ov \zeta ]_i \right), \quad i = i(\nu), \quad \zeta \in U_\nu \cap bD. 
		\] 
		By \re{Unu_vol_form}, the boundary integral in \re{ibp_1st_time_2} can be written as 
		\begin{align*}
		&\int_{b D} \frac{u(\zeta) \chi_\nu(\zeta)}{Q''(z,\zeta) \ov{\Phi}^m(\zeta, z)} \sum_{k=1}^n (-1)^{k-1} \DD{\rho}{\zeta_k} \left[ d \ov \zeta \right]_k  \we d \zeta
		= \int_{b D} \frac{u(\zeta) \chi_\nu (\zeta)  \var_\nu(\zeta)  }{Q''(z,\zeta) \ov{\Phi}^m(\zeta,z)} \, d \zeta \we [d \ov \zeta]_{i(\nu)}
		\\ \quad &= \int_{bD} \frac{u(\zeta) \chi_\nu (\zeta) \var_\nu(\zeta) }{(Q''b_{i(\nu)} )(z,\zeta)} d_\zeta(\ov{\Phi}^{-(m-1)}(\zeta,z) [d \zeta]_{i(\nu)} \we [d \ov \zeta]_{i(\nu)} ),  
		\end{align*}
		where the constant is absorbed into $\var_\nu$. 
		By Stokes' theorem, the integral is equal to 
		\begin{align*}
		\int_{bD} d_\zeta \left(  \frac{u(\zeta) (\chi_\nu \var_\nu)(\zeta)}{(Q''b_{i(\nu)} )(z,\zeta)} \right) \ov{\Phi}^{-(m-1)}(\zeta,z) [d \zeta]_{i(\nu)} \we [d \ov \zeta]_{i(\nu)}. 
		\end{align*}  
		Let $\psi_\nu$ be the function such that $ [d \zeta]_{i(\nu)} \we [d \ov \zeta]_{i(\nu)} = \psi_\nu(\zeta) d \si (\zeta) $.  Summing the above expression over $\nu$, the boundary integral in \re{ibp_1st_time_2} can be written as 
		\eq{P''_formula} 
		\int_{bD} P''(u)(\zeta) \ov{\Phi}^{-(m-1)}( \zeta,z) \, d\si(\zeta), \quad 
		P'' (u)(\zeta):= \sum_{\nu=1}^M d_\zeta \left(  \frac{u(\zeta) (\chi_\nu \var_\nu)(\zeta)}{(Q''b_{i(\nu)} )(z,\zeta)} \right) \psi_\nu(\zeta). 
		\eeq
		Hence we obtain formula \re{ibp_2}. 
		
		Finally, the proof of (iii) is clear from the proofs of (i) and (ii). 
	\end{proof}

	In this section we prove \rp{Prop::K_bdd_intro} and then use it to prove \rt{Thm::Bker_intro} and \rt{Thm::Bproj_intro}. First we fix some notations. 
      We will write $|f|_r = |f|_{C^r(\ov D)}$, where $| \cdot |_{C^r(\ov D)}$ denotes the H\"older-$r$ norm on $D$. We also write $\del(z):= \dist(z,bD)$.  
	
\section{Proof of \rp{Prop::K_bdd_intro} and \rt{Thm::Bker_intro}} 
In this section we prove \rt{Thm::Bker_intro}. We begin with \rp{Prop::K_bdd_intro}.
\ 

\medskip
\nid 
\tit{Proof of \rp{Prop::K_bdd_intro}.}  
We shall assume that $\rho$ is a regularized defining function satisfying the properties in \rp{Prop::reg_def_fcn}. In particular, we have $\rho \in C^\infty(\C^n) \cap C^{k+3+\all}(\ov D)$ and 
		\eq{rho_der_est} 
		|D^j \rho(z) | \lesssim C_j |\rho|_{k+2+\all} ( 1+ \del(z)^{k+3+\all-j} ), \quad j=0,1,2,\dots. 
		\eeq
		We recall the notation: 
		\[
		\Phi(z,\zeta):= G(z,\zeta) - \rho(\zeta),
		\quad \ov{\Phi(\zeta,z)} := \ov{G(\zeta,z)} - \rho(z). 
		\] 
		In view of \re{Kernel_def}, we can write
		\[
		\Kc f(z) = \Kc_0 f(z) + \Kc_1 f(z)
		:= \int_D K_0(z,\zeta) f(\zeta) \, dV(\zeta) + \int_D K_1(z,\zeta) f(\zeta) \, dV(\zeta), 
		\] 
		where $K_0(z,\zeta):= \ov{L_0(\zeta, z)} - L_0(z,\zeta)$ and $K_1(z,\zeta) = \ov{L_1(\zeta, z)} - L_1(z,\zeta)$. We first estimate $\Kc_0 f$. In view of \re{Kernel_def}, we have 
		\begin{align*}
		\Kc_0 f(z) &= \int_D f(\zeta) \left( \ov{L_0(\zeta,z)} - L_0(z,\zeta) \right) \, dV(\zeta). 
		\end{align*} 
		where $L_0(z,\zeta) \, dV(\zeta) = S_z (\dbar_z \dbar_\zeta N)(z,\zeta) $. 
		As observed earlier, since $bD \in C^{k+3+\all}$, the coefficients of  $\dbar_z \dbar_\zeta N(z,\zeta)$ belong to the class $C^\infty \times  C^{k+\all}(D_\del(z)  \times  D_\del(\zeta))$. By \rp{Prop::Hormander} and the fact that $S_z$ is a linear operator, we see that $L_0(z,\zeta) \in C^\infty \times  C^{k+\all}(D_\del(z)  \times  D_\del(\zeta))$, which also implies $L_0(\zeta,z) \in C^\infty \times C^{k+\all} (D_\del(\zeta)  \times  D_\del(z))$.  
		
		Accordingly, we have 
		\eq{L0_int_est} 
		\int_D f(\zeta) L_0(z,\zeta) \, dV(\zeta) \in C^\infty(\ov D), \quad  \int_D f(\zeta) \ov{L_0(\zeta,z)} \, dV(\zeta) \in C^{k+\all}(\ov D). 
		\eeq 
		Here the first statement is clear. We now prove the second statement. By estimate \re{rho_der_est} and the expression for $L_0(\zeta,z)$, it follows that 
		\begin{align*}
		\left| \int_D f(\zeta) D^{k+1}_z \ov{L_0(\zeta,z) } \, dV(\zeta) \right| \lesssim |f|_0 (1+ \del(z)^{-1+\all}). 
		\end{align*}
		Hence by \rl{Lem::H-L}, the second integral in \re{L0_int_est} belongs to $C^{k+\all}(\ov D)$. Thus we have shown $\Kc_0 f \in C^{k+\all}(\ov D)$. 
		
		Next we estimate $\Kc_1 f$. First we prove for the case $k=0$, i.e. $\rho \in C^{3+\all}$. 
		In view of \re{L1_Lig_exp}, we have 
		\begin{align*}
		\Kc_1 f(z) 
		&= \int_D f(\zeta) \left( \ov{L_1(\zeta,z)} - L_1(z,\zeta) \right) \, dV(\zeta)
		\\ &= \int_D f(\zeta) \left[ \frac{\eta(z) + O'' (|z-\zeta|)}{\ov{\Phi}^{n+1}(\zeta,z) } - \frac{\eta(\zeta) + O'(|z - \zeta|)}{\Phi^{n+1}(z,\zeta) } \right] \, dV(\zeta). 
		\end{align*} 
		Let $\chi_0$ be a $C^\infty$ cut-off function supported in the set $E_0 :=\{(z,\zeta) \in D \times D: |z-\zeta| < \del_0 \}$, and $\chi_0 \equiv 1$ in $\{(z,\zeta) \in D \times D: |z-\zeta| < \frac{\del_0}{2} \}$, for some $\del_0>0$. From the definition of $G(z,\zeta)$ (see \re{G_def}), we can choose $\del_0$ to be sufficiently small such that on the set $E_0$, we have $\Phi(z,\zeta) = F(z,\zeta) - \rho(\zeta)$.  
		Write
		\[ 
		\Kc_1 f(z) = 
		\Kc_1'f(z) + \Kc_1''f(z), 
		\] 
		where
		\[ 
		\Kc_1'f(z):= \int_D f(\zeta) (\chi_0 K_1)(z,\zeta) \, dV(\zeta), 
		\quad \Kc_1''f(z) = \int_D f(\zeta) [(1-\chi_0) K_1](z,\zeta)  \, dV(\zeta), 
		\] 
  with $K_1 (z,\zeta)= \ov{L_1(\zeta,z)} - L_1(z,\zeta)$. 
		The function $(1-\chi_0) K_1$ is supported in $E_1:= \{ (z,\zeta) \in D \times D: |z-\zeta| \geq \frac{\ve_0}{2} \}$. By estimate \re{G-rho_lbd_1} and the assumption $\rho \in C^{k+3+\all}$, we see that $(1-\chi_0) L_1(z,\zeta) \in C^\infty \times C^{k+\all} (\ov{D}(z) \times \ov{D} (\zeta))$ and $(1-\chi_0) L_1(\zeta,z) \in C^{k+\all} \times C^\infty (\ov D(z) \times \ov D(\zeta))$. By the same argument used to prove \re{L0_int_est}, we can show that $\Kc_1''f \in C^{k+\all}(\ov D)$. 

  It remains to estimate $\Kc_1' f$. We will divide the proof into two steps. In the first part, we show that if $bD \in C^{3+\all}$, then $\Kc_1'f \in C^{\min \{ \all, \yh \}}$. In the second part, we use integration by parts to show that if $bD \in C^{k+3+\all}$, for $k \geq 1$, then $\Kc_1'f \in C^{k+\min \{ \all, \yh \}}$. 
\subsection{Case 1: $bD \in C^{3+\all}$.}   
 \ 
 
Assume now that $bD \in C^{3+\all}$. In what follows we will write $D_0 = D_0(z) = \{ \zeta \in D: |\zeta-z| \leq \del_0 \} $, and without loss of generality we can assume $f$ is supported in $D_0$. 
		Taking $z_j$ derivative we get 
		\begin{align*} 
		&\DD{\Kc_1' f}{z_j}(z) = \int_D f(\zeta) \left( \frac{\DD{}{z_j}[\eta(z) + O''(|z-\zeta|)] }{ \ov{\Phi}^{n+1}(\zeta,z)} - \frac{\DD{}{z_j}[\eta(\zeta) + O'(|z-\zeta|)] }{ \Phi^{n+1}(z,\zeta) } \right) \, dV(\zeta)
		\\ &\quad - (n+1) \int_D f(\zeta) \left( \frac {\left[ \DD{}{z_j} \ov{\Phi(\zeta,z)} \right] (\eta(z) + O''(|z-\zeta|)) }{ \ov{\Phi}^{n+2} (\zeta,z) } 
		- \frac {\left[ \DD{}{z_j} \Phi(z,\zeta) \right] (\eta(\zeta) + O'(|z-\zeta|)) }{ \Phi^{n+2} (z,\zeta) } 
		\right) \, dV(\zeta)
		\\ & := I_1(z) + I_2(z), 
		\end{align*}
		where we denote the first and second integral by $I_1$ and $I_2$, respectively.
		We first estimate $I_1$. By \re{O'_est}, we have 
		\[
		\left| \DD{}{z_j} [\eta(\zeta) + O'(|z-\zeta|)] \right| \lesssim |\rho|_3. 
		\] 
		Hence by \re{Phi_int_1}, 
		\begin{align} \label{I1_int_1}  
		\int_{D} \frac{ \left|\pa_{z_j} [\eta(\zeta) + O'(|z-\zeta|)] \right| }{|\Phi^{n+1}(z,\zeta)|} |f(\zeta)| \, dV(\zeta) 
		\lesssim |f|_0 |\rho|_3 \int_{D} \frac{dV(\zeta)}{|\Phi(z,\zeta)|^{n+1} }  
		\lesssim |f|_0 |\rho|_3  (1 + \log \del(z) ). 
		\end{align}
		On the other hand, using estimate \re{O''_est} we have
		\begin{align*}
		\left| \DD{}{z_j} [\eta(z) + O''(|z-\zeta|)] \right| &\lesssim |D^3_z \rho(z)| + |D^4_z \rho(z)| |\zeta-z| 
		\\ &\lesssim |\rho|_{3+\all} \left(1 + \del(z)^{-1+\all} |\zeta-z| \right), 
		\end{align*} 
		where in the last inequality we applied \re{rho_der_est} with $k=0$ and $j=4$.  
		
		Thus applying \re{zeta-z_Phi_int_2} and \re{Phi_int_1} we obtain`
		\begin{equation} \label{I1_int_2}
		\begin{aligned} 
		&\int_{D} \frac{ \left|\pa_{z_j} [\eta(z) + O''(|z-\zeta|)] \right| }{|\ov{\Phi}^{n+1}(\zeta,z)|} |f(\zeta)| \, dV(\zeta) \\ 
		&\quad \lesssim |\rho|_{3 +\all} |f|_0 \left(  \int_{D} \frac{dV(\zeta)}{|\Phi(\zeta,z)|^{n+1} } + \del(z)^{-1+\all} \int_{D} \frac{|\zeta-z|}{|\Phi(\zeta,z)|^{n+1}} \, dV(\zeta) \right)
		\\ &\quad \lesssim |\rho|_{3 +\all} |f|_0  \left( \log \del(z) + \del(z)^{-1+\all} \right) 
		\lesssim  |\rho|_{3 +\all} |f|_0 \del(z)^{-1+\all}. 
		\end{aligned}  
		\end{equation} 
		Putting together estimates \re{I1_int_1} and \re{I1_int_2}, we get 
		\begin{equation} \label{K1'_I1_est}
		|I_1(z)| \lesssim |\rho|_{3 +\all} |f|_0 \del(z)^{-1+ \all}, \quad 0  < \all < 1. 
		\end{equation}

		For the integral $I_2$, we can write it as $I_2 (z) = -(n+1) \sum_{i=1}^3 J_i(z)$, where 
		\begin{align*} 
		J_1(z) = \int_D f (\zeta) \left( \DD{\ov{ \Phi(\zeta,z)} }{z_j} - \DD{\Phi(z,\zeta)}{z_j}  \right) \frac{[\eta(z) + O''(|z-\zeta|)] }{\ov \Phi^{n+2}(\zeta,z)} \, dV(\zeta) ;  
		\end{align*}
		\begin{align*} 
		J_2(z) = \int_D f (\zeta) \left[ \eta(z) - \eta(\zeta) + O''(|z-\zeta|) - O'(|z-\zeta|)  \right] \frac{\pa_{z_j} \Phi(z,\zeta) }{\ov \Phi^{n+2}(\zeta,z)} \, dV(\zeta) ;  
		\end{align*}
		\begin{align*} 
		J_3(z) = \int_D f (\zeta) 
		\left[ \DD{\Phi(z,\zeta)}{z_j} \right]
		\left[ \eta(\zeta) + O'(|z-\zeta|) \right] \left( \frac{1}{\ov \Phi^{n+2}(\zeta,z)} - \frac{1}{\Phi^{n+2}(z,\zeta)} \right) 
		\, dV(\zeta) .   
		\end{align*}
		To estimate $J_1$, we note that by \rl{Lem::Phi_grad_diff}, for any $\zeta \in \supp f \subset D_0 (z) = \{ |\zeta -z| < \del_0 \}$:  
		\[
		\left| \DD{\ov {\Phi(\zeta,z)}}{z_j} - \DD{\Phi(z,\zeta)}{z_j} \right| \lesssim |\rho|_3 |\zeta -z |. 
		\] 
		Together with estimates \re{O''_est} and \re{Phi_lbd} we get
		\begin{equation} \label{J1_est} 
		|J_1(z)| \lesssim |f|_0 |\rho |_3 \int_D  \frac{|\zeta-z|}{\left| \ov \Phi(\zeta,z) \right|^{n+2} } \, dV(\zeta) 
		\lesssim |f|_0 |\rho |_3 \int_D \frac{dV(\zeta)}{|\zeta-z| |\Phi(\zeta,z)|^{n+1} } 
		\lesssim |f|_0 |\rho |_3 \del(z)^{-\yh}, 
		\end{equation}
		where in the last inequality we applied estimate \re{zeta-z_Phi_int_1} with $\all =1$. 
		For $J_2$, we note that $|\eta (z)- \eta(\zeta)| \lesssim |\rho|_3 |z-\zeta|$. 
		By \rl{Lem::Phi_z_der} (i), we have 
		\eq{Phi_z_der_recall} 
		\left| \DD{\Phi(z,\zeta)}{z_j} \right| 
		\lesssim |\rho|_1 + |\rho|_2 |\zeta-z| \lesssim |\rho|_2. 
		\eeq 
		Applying estimates \re{Phi_lbd}, \re{O'_est}, \re{O''_est}, and \rl{Lem::int_est}, we get
		\begin{equation} \label{J2_est} 
		\begin{aligned} 
		|J_2(z)|  \lesssim |f|_0 |\rho|_3 \int_D   \frac{|\zeta-z| }{\left|  \Phi(\zeta,z) \right|^{n+2}} \, dV(\zeta) 
		\lesssim |f|_0 |\rho |_3 \int_D \frac{dV(\zeta)}{|\zeta-z| |\Phi(\zeta,z)|^{n+1} } 
		\lesssim |f|_0 |\rho |_3 \del(z)^{-\yh}. 
		\end{aligned} 
		\end{equation} 
		For $J_3$ we use estimate \re{cancel_est}, 
		\begin{equation} \label{J3_est} 
		\begin{aligned} 
		|J_3(z)| &\lesssim |f|_0 |\rho|_3
		\int_D |\zeta-z|^3 \left(  \frac{|\Phi(z,\zeta)|^{n+1}}{|\ov {\Phi (\zeta, z)}|^{n+2} |\Phi(z,\zeta)|^{n+2}}
		+ \frac{|\ov{\Phi(\zeta,z)}|^{n+1}}{|\ov {\Phi (\zeta, z)}|^{n+2} |\Phi(z,\zeta)|^{n+2}} \right) \, dV(\zeta) 
		\\ &=|f|_0 |\rho|_3 \int_D |\zeta-z|^3 \left(  \frac{1}{|\Phi (\zeta, z)|^{n+2} |\Phi(z,\zeta)|}
		+ \frac{1}{| \Phi (\zeta, z)| | \Phi(z,\zeta)|^{n+2}} \right) \, dV(\zeta)
		\\ &\lesssim |f|_0 |\rho|_3 
		\left( \int_D \frac{ |\zeta-z| }{| \Phi(\zeta,z)|^{n+2} } \, dV(\zeta) + \int_D \frac{|\zeta-z|}{ | \Phi(z,\zeta)|^{n+2} } \, dV(\zeta) \right)  
		\\ &\lesssim |f|_0 |\rho|_3
		\left( \int_D \frac{dV(\zeta)}{|\zeta-z| |\Phi(\zeta,z)|^{n+1} } + \int_D \frac{dV(\zeta)}{|\zeta-z| |\Phi(z,\zeta)|^{n+1} }  \right)
		\lesssim |f|_0 |\rho|_3  \del(z)^{-\yh}, 
		\end{aligned}
		\end{equation} 
		where in the last inequality we applied \rl{Lem::int_est} with $\beta =1$. 
		Hence we have shown that 
		\eq{K1'_I2_est}
		|I_2(z)| \lesssim |f|_0 |\rho|_3  \del(z)^{-\yh}. 
		\eeq 
		Combining \re{K1'_I1_est} and \re{K1'_I2_est}, we have 
		\[ 
		\left| \DD{\Kc_1' f}{z_j} \right| \lesssim 
		\begin{cases} 
		| f|_0 |\rho|_{3+\all} \del(z)^{-1 +\all }, & \text{if $ 0 < \all \leq \yh$};  \\ 
		| f|_0 |\rho|_3\del(z)^{-\yh}, & \text{if $ \yh \leq \all \leq 1$}.  
		\end{cases}
		\]
		In a similar way we can show that $ |\pa_{\ov z_j}\Kc_1' f |$ satisifes the same estimate. 
		It follows by \rl{Lem::H-L} that 
		\[
		\Kc_1' f \in 	 \begin{cases} 
		C^{\all}(\ov D), & \text{if $ 0 < \all \leq \yh$};  \\ 
		C^{\yh}(\ov D), &\text{if $ \yh \leq \all \leq 1$}.  
		\end{cases}
		\]
	This completes the proof for the $k=0$ case. 
\subsection{Case 2: $bD \in C^{k+3+\all}$, $k \geq 1$} 
 \
 
 We now assume that $\rho \in C^{k+3+\all}$, for $k \geq 1$. Taking $k+1$ derivatives we get 
		\begin{equation} \label{Dgm_Kf}  
		\begin{aligned}
		D^{k+1}_z \Kc_1' f(z) &= \sum_{\gm_1 + \gm_2 \leq k+1}  \int_D f(\zeta) \left[ D^{\gm_1}_z \{ \eta(z) + O''(|z-\zeta|)\} D^{\gm_2}_z (\ov{\Phi}^{-(n+1)}(\zeta,z)) \right] \, dV(\zeta)
		\\ &- \sum_{\gm_1+\gm_2 \leq k+1} \int_D f(\zeta) \left[ D^{\gm_1}_z \{ \eta(\zeta) + O'(|z-\zeta|) \} D^{\gm_2}_z (\Phi^{-(
			n+1)}(z,\zeta)) \right]  \, dV(\zeta) 
		\\ &:= F_1 + F_2, 
		\end{aligned}
		\end{equation}
		where we denote the first and second sum in \re{Dgm_Kf} by $F_1$ and $F_2$, respectively. 
		We break up into cases. \\ 
		\tit{Case 1: $\gm_1 = k+1$. ($\gm_2 =0$).} \\ 
		By \re{O'_est} and \re{O''_est}, we get 
		\[ 
		|D^{k+1}_z \{ \eta(\zeta) + O'|\zeta-z| \} | \lesssim |\rho|_{k+3}.  
		\]
		\[ 
		|D^{k+1}_z \{ \eta(z) + O''|\zeta-z| \} | \lesssim |\rho|_{k+3} + |\rho|_{k+4}   | |\zeta-z| \lesssim |\rho|_{k+3+\all} ( 1+ \del(z)^{-1+\all} |\zeta-z|), 
		\] 
		where for the last inequality we used \re{rho_der_est} with $j = k+4$. 
		By doing similar estimate as that for the integral $I_1$ in the $k=0$ case, we get
		\[ 
		|D^\gm \Kc_1' f (z) | \lesssim |\rho|_{k+3+\all} |f|_0 \del(z)^{-1+ \all}, \quad 0 < \all < 1. 
		\] 
		\tit{Case 2: $1 \leq \gm_2 \leq k$.} ($\gm_1 \leq k$.) \\ 
		The term in the sum in \re{Dgm_Kf} takes the form
		\begin{equation} \label{case2_int} 
		\begin{aligned}
		&\int_D f(\zeta) \left[ \frac{D^{\gm_1}_z \{ \eta(z) + O''(|z-\zeta|)\}}{\ov{\Phi}^{n+1+\tau}(\zeta,z) } S_{\tau}''(z,\zeta)  \right] \, dV(\zeta)
		\\ &\quad - \int_D f(\zeta) \left[ \frac{D^{\gm_1}_z \{ \eta(\zeta) + O'(|z-\zeta|) \}}{\Phi^{n+1+\tau}(z,\zeta)} S_{\tau}'(z,\zeta)   \right]  \, dV(\zeta), \quad \tau \leq \gm_2 \leq k, 
		\end{aligned}  
		\end{equation} 
		where $S''_\tau(z,\zeta)$ is some linear combination of products of $D_z^{l} \ov{\Phi}(\zeta, z)$, $l \leq k$, and $S'_\tau$ is some linear combination of 
		products of $ D_z^l \Phi(z,\zeta)$, $l \leq k$.
		
		It is convenient to recall the notation: 
		\eq{Q'Q''_recall} 
		Q'(z,\zeta) = \sum_{i=1}^n \DD{\Phi(z,\zeta)}{\ov{\zeta}_i} \DD{\rho}{\zeta_i}, \quad Q''(z,\zeta)= \sum_{i=1}^n \frac{\pa \ov{\Phi(\zeta,z)} }{\pa \ov \zeta_i} \cdot \frac{\pa \rho}{\pa \zeta_i}, 
		\eeq 
		and for $|\zeta-z|$ small, we have 
		\begin{gather}
		\label{Phi_recall} 
		\Phi(z,\zeta) = F(z,\zeta) - \rho(\zeta) 
		= \sum_{j=1}^n \DD{\rho}{\zeta_j}(\zeta) (\zeta_j -z_j) - \yh\sum_{i,j=1}^n \frac{\pa^2 \rho}{\pa \zeta_i \pa \zeta_j}(\zeta) (z_i -\zeta_i) (z_j- \zeta_j) - \rho(\zeta) ; 
		\\ 
		\Phi(\zeta,z) = F(\zeta,z) - \rho(z) 
		= \sum_{j=1}^n \DD{\rho}{z_j}(z) (z_j -\zeta_j) - \yh \sum_{i,j=1}^n \frac{\pa^2 \rho}{\pa z_i \pa z_j}(z) (\zeta_i -z_i) (\zeta_j- z_j) - \rho(z). 
		\end{gather}

		For the first integral in \re{case2_int}, we apply integration by parts formulae \re{ibp_1} and \re{ibp_bdy} iteratively until the integral becomes a linear combination of  
		\eq{case2_Phibar_ibp} 
		\int_{bD} \frac{ [D^{\mu_0} f(\zeta)] W_1''(z,\zeta) }{\ov{\Phi}^n(\zeta,z)} \, dV(\zeta), \quad \int_D \frac{[D^{\eta_0} f(\zeta)] W_2''(z,\zeta)}{\ov{\Phi}^{n+1}(\zeta,z)} \, dV(\zeta), \quad \mu_0, \eta_0 \leq k,
		\eeq 
		where $W''_1$, $W''_2$ are some linear combinations of products of 
		\[
		D_\zeta^{\mu_1} D_z^{\gm_1} \{ \eta(z) + O''(|z-\zeta|) \}, 
		\; D^{\mu_2}_\zeta [(Q'')^{-1}], 
		\; D_\zeta^{\mu_3+1} D_z^l \ov{\Phi (\zeta, z)},
		\; D_\zeta^{\mu_4+1} \rho(\zeta), \quad l \leq k,  
		\] 
		and $\mu_i \geq 0$ satisfies $ \sum_{i=1}^4 \mu_i \leq k $. 
		Now we have $|D_\zeta^{\mu_1} D_z^{\gm_1} \{ \eta(z) + O''(|z-\zeta|) \}| \leq C_k |\rho|_{k+2}$ (since $ \gm_1 \leq k$, $\mu_1 \leq k$), $| D^{\mu_2}_\zeta [(Q'')^{-1}(z,\zeta)]| \leq C_k$, $|D_\zeta^{\mu_3+1} D_z^l \ov{\Phi (\zeta, z)}| \leq C_k |\rho|_{k+1} $ (since $l \leq k$). Hence the integrals in \re{case2_Phibar_ibp} and thus the first integral in \re{case2_int} can be bounded by   
		\eq{case2_int_est_1}   
		|f|_k |\rho|_{k+3} \left( \int_{bD} \frac{d\si(\zeta)}{|\Phi (\zeta, z)|^n}  + \int_D \frac{dV(\zeta)}{|\Phi (\zeta, z)|^{n+1}}  \right) 
		\lesssim |f|_k |\rho|_{k+3} (1+\log \del(z)), 
		\eeq  
		where we applied \rl{Lem::Phi_int_est}. 
		For the second integral in \re{case2_int}, we apply formulae \re{ibp_2} and \re{ibp_bdy} iteratively until the integral becomes a linear combination of 
		\eq{case2_Phi_ibp} 
		\int_{bD} \frac{D^{\mu_0} f(\zeta) W_1'(z,\zeta) }{\Phi^n(z, \zeta)} \, dV(\zeta), \quad \int_D \frac{D^{\eta_0} f(\zeta) W_2'(z,\zeta)}{\Phi^{n+1}(z,\zeta)} \, dV(\zeta), \quad \mu_0, \eta_0 \leq k.  
		\eeq  
		Here $W_1'$ and $W_2'$ are linear combinations of products of 
		\[
		D_\zeta^{\mu_1} D^{\gm_1}_z \{ \eta(\zeta) + O'(|z-\zeta|), 
		\; D_\zeta^{\mu_2}[(Q')^{-1}], 
		\; D_\zeta^{\mu_3+1} D_z^l \Phi(z,\zeta), 
		\; D_{\zeta}^{\mu_4+1} \rho(\zeta), \quad l \leq k, 
		\]
		and $\mu_i \geq 0$ satisfies $ \sum_{i=1}^4 \mu_i \leq k $. We have $|D_\zeta^{\mu_1} D^{\gm_1}_z \{ \eta(\zeta) + O'(|z-\zeta|) \}| \leq C_k  |\rho|_{\mu_1+3} \leq |\rho|_{k+3}$ (since $ \gm_1, \mu_1 \leq k$), 
		$ |D_\zeta^{\mu_2}[(Q')^{-1}] | \lesssim |\rho|_{\mu_2+3} \lesssim |\rho|_{k+3}$, and $|D_\zeta^{\mu_3+1} D_z^l \Phi(z,\zeta)| \lesssim C_k |\rho|_{\mu_3+3} \lesssim |\rho|_{k+3}$. It follows that the integrals in \re{case2_Phi_ibp} and hence the second integral in \re{case2_int} is bounded by  
		\eq{case2_int_est_2}
		|f|_k |\rho|_{k+3} \left( \int_{bD} \frac{d\si(\zeta)}{|\Phi (z,\zeta)|^n}  + \int_D \frac{dV(\zeta)}{|\Phi (z, \zeta)|^{n+1}} \right)
		\lesssim |f|_k |\rho|_{k+3} (1+ \log \del(z)).
		\eeq 
		Combining \re{case2_int_est_1} and \re{case2_int_est_2}, we get for this case
		\[
		|D^\gm \Kc_1' f (z) | \lesssim |\rho|_{k+3} |f|_k (1+ \log \del(z)).  
		\]
		\medskip
		\tit{Case 3: $\gm_2 = k+1$ ($\gm_1 =0$).} \\ 
		Applying integration by parts formulae \re{ibp_1}, \re{ibp_2} and \re{ibp_bdy} iteratively to $F_1(z)$ in \re{Dgm_Kf} yields a linear combination of 
		\eq{F1_ibp_ktimes}   
		\int_{bD} \frac{D^{\eta_0} f(\zeta) R''_0(z,\zeta)}{\ov{\Phi}^{n+1}(\zeta,z)}  \, dV(\zeta), \quad 
		\int_D \frac{D^{\mu_0} f(\zeta) R''_1(z,\zeta)}{\ov{\Phi}^{n+2}(\zeta,z)}  \, dV(\zeta),  \quad \eta_0, \mu_0 \leq k. 
		\eeq 
		Similarly we apply integration by parts to $F_2(z)$ until it becomes a linear combination of 
		\eq{F2_ibp_ktimes}   
		\int_{bD} \frac{D^{\eta_0} f(\zeta) R'_0(z,\zeta)}{\Phi^{n+1}(z, \zeta) }  \, dV(\zeta), \quad 
		\int_D \frac{D^{\mu_0} f(\zeta) R'_1(z,\zeta)}{\Phi^{n+2}(z, \zeta) }  \, dV(\zeta). 
		\eeq 
		Here $R''_0(z,\zeta)$ and $R''_1(z,\zeta)$ are some linear combination of products of  
		\[
		D^{\mu_1}_\zeta \left(\eta(z) + O''(|z-\zeta|)  \right), \; D^{\mu_2}_\zeta [(Q'')^{-1}], \;  D^{\mu_3+1}_\zeta \ov{\Phi}(\zeta,z), \; D^{\mu_4}_\zeta D_z \ov{\Phi}(\zeta,z), \; D^{\mu_5+1}_\zeta \rho(\zeta), 
		\] 
		$R'_0(z,\zeta)$ and $R'_1(z,\zeta)$ are some linear combination of the products of
		\eq{mu_i_der} 
		D^{\mu_1}_\zeta \left(\eta(\zeta) + O'(|z-\zeta|)  \right), \; D^{\mu_2}_\zeta [(Q')^{-1}], \;  D^{\mu_3+1}_\zeta \Phi(z,\zeta), \; D^{\mu_4}_\zeta D_z \Phi(z,\zeta), \; D^{\mu_5+1}_\zeta \rho(\zeta),
		\eeq 
		where $0 \leq \mu_i \leq k$ for $0 \leq i \leq 5$, and $\sum_{i=0}^5 \mu_i \leq k$. 
		There are five subcases to consider: 
		\\ \\ 
		\medskip
		\textit{Subcase 1}: $\gm_2 =k+1$, $\mu_0, \mu_1, \mu_2, \mu_3 \leq k-1$. 
		Then we do integration by parts one more time to the integrals in \re{F2_ibp_ktimes} 
		and the resulting integrals become
		\eq{F2_ibp_k+1_times}
		\int_{bD} \frac{D^{ \wti \eta_0} f(\zeta) R'_0(z,\zeta)}{\Phi^n(z, \zeta) }  \, dV(\zeta), \quad 
		\int_{D} \frac{D^{\wti \mu_0} f(\zeta) R'_1(z,\zeta)}{\Phi^{n+1}(z, \zeta) }  \, dV(\zeta), \quad 
		\wti \eta_0, \wti \mu_0 \leq k, 
		\eeq  
		where $\wti R'_0$ and $\wti R'_1$ are linear combinations of products of 
		\[
		D^{\wti{\mu_1}}_\zeta \left(\eta(\zeta) + O'(|z-\zeta|)  \right), \; D^{\wti \mu_2}_\zeta [(Q')^{-1}], \; D^{\wti \mu_3 +1 }_\zeta \Phi(z,\zeta), \; D^{\wti \mu_4}_\zeta D_z \Phi(z,\zeta), \; D^{\wti \mu_5+1}_\zeta \rho(\zeta), 
		\] 
		with $\wti{\mu_0}, \wti{\mu_1}, \wti{\mu_2}, \wti{\mu_3} \leq k$ and $\sum_i \wti \mu_i \leq k+1$. Then 
		\[
		| D^{\wti{\mu_1}}_\zeta \left(\eta(\zeta) + O'(|z-\zeta|)  \right)| \lesssim |\rho|_{k+3}. 
		\] 
		In view of \re{Phi_zetabar_der} and \re{Phi_zeta_der}, we have 
		\eq{Q'_der_est}
		\left| D_\zeta^l Q'(z,\zeta) \right| 
		=  D_\zeta^l 
		\left( \sum_{i=1}^n \DD{ \Phi(z,\zeta)}{\ov{\zeta}_i} \DD{\rho}{\zeta_i} \right) 
		\lesssim |\rho|_{l+2} + | \rho|_{l+3} |\zeta-z| \lesssim |\rho|_{l+3}, 
		\eeq 
		and similarly $		\left| D^{l+1}_\zeta \Phi(z,\zeta) \right| \lesssim |\rho|_{l+3}$. 
		Hence for $\wti \mu_2, \wti \mu_3 \leq k$, we have $ |D^{\wti \mu_2}[(Q')^{-1}], |D^{\wti \mu_3 +1} \Phi(z,\zeta)| \lesssim |\rho|_{k+3}$. Putting together the estimates, it follows that the integrals in \re{F2_ibp_k+1_times} and thus in \re{F2_ibp_ktimes} satisfy
		\begin{gather*}
		\left| \int_{bD} \frac{D^{ \wti \mu_0} f(\zeta) R'_0(z,\zeta)}{\Phi^n(z, \zeta) }  \, dV(\zeta) \right|  
		\lesssim |f|_k |\rho|_{k+3} \int_{bD} \frac{d\si(\zeta)}{|\Phi(z,\zeta)|^n}
		\lesssim |f|_k |\rho|_{k+3} (1+ \log \del(z)); 
		\\ 
		\left|  \int_D \frac{D^{\wti \mu_0} f(\zeta) R'_1(z,\zeta)}{\Phi^{n+1}(z, \zeta) }  \, dV(\zeta) \right| 
		\lesssim |f|_k |\rho|_{k+3} \int_D \frac{dV(\zeta)}{|\Phi(z,\zeta)|^{n+1}} 
		\lesssim |f|_k |\rho|_{k+3} (1+ \log \del(z)). 
		\end{gather*}
		We can obtain similar estimates for the integrals in \re{F1_ibp_ktimes}, where the proof is easier since the functions $R''_0$ and $R''_1$ are $C^\infty$ in $\zeta$.  In conclusion we have shown that in this case
		\[
		| D^\gm_z \Kc_1'f(z)| \lesssim |\rho|_{k+3} |f|_k  (1+ \log \del(z)). 
		\]
		\\ 
		\medskip
		\textit{Subcase 2}: $\gm_2 =k+1$, $\mu_1 = k$. Again we shall only estimate \re{F2_ibp_ktimes} as a similar procedure can be applied to \re{F1_ibp_ktimes}. 
		The integrals in \re{F2_ibp_ktimes} can be written as 
		\eq{mu_1=k_dom_int} 
		\int_{bD} \frac{f(\zeta) D_\zeta^k[\eta(\zeta) + O'|z-\zeta|] R_0'(z,\zeta)}{\Phi^{n+1}(z,\zeta)} \, d\si (\zeta) , \quad
		\int_{D} \frac{f(\zeta) D_\zeta^k[\eta(\zeta) + O'|z-\zeta|] R_1'(z,\zeta)}{\Phi^{n+2}(z,\zeta)}
		\, dV (\zeta), 
		\eeq  
		where $R_0'$ and $R_1'$ are some linear combination of the products of 
		\[
		(Q')^{-1}, \; D_z \Phi(z,\zeta), \; D_\zeta \Phi(z,\zeta), \; D \rho(\zeta). 
		\] 
		We now estimate the domain integral in \re{mu_1=k_dom_int} which can be written as  $B_1+B_2$, where 
		\begin{gather} \label{subcase2_B1B2} 
		B_1 (z)= \int_D \frac{f(\zeta) D_\zeta^k \eta(\zeta) R_1'(z,\zeta) }{\Phi^{n+2}(z,\zeta)} \, dV(\zeta), \quad 
		B_2 (z) = \int_D \frac{f(\zeta) D_\zeta^k [ O'(|z-\zeta|)]  R_1'(z,\zeta)}{\Phi^{n+2}(z,\zeta)}\, dV(\zeta). 
		\end{gather}  
		We apply integration by parts formulae \re{ibp_1} and \re{ibp_bdy} to $B_1$ so that 
		\[ 
		B_1 (z) = \int_{bD} \frac{D^{\wti \nu_0}_\zeta f(\zeta)  D^{k+ \wti{\nu_1}}_\zeta \eta(\zeta)  \wti R'_{10}(z,\zeta)}{\Phi^n (z,\zeta)} \, d\si(\zeta)  +  \int_D \frac{D^{\wti \mu_0}_\zeta f(\zeta)  D^{k+ \wti{\mu_1}}_\zeta \eta(\zeta)  \wti R'_{11}(z,\zeta)}{\Phi^{n+1} (z,\zeta)} \, dV(\zeta). 
		\] 
		Here $ \wti \nu_0, \wti \mu_0, \wti \mu_0,  \wti \mu_1 \leq 1$. $\wti R'_{10}(z,\zeta)$ and $\wti R'_{11}(z,\zeta)$ are linear combinations of the products of 
		\[
		\hht D_\zeta (Q')^{-1}, \quad \hht D_\zeta D_z \Phi(z,\zeta), \quad \hht D_\zeta^2  \Phi(z,\zeta), \quad \hht D_\zeta^2 \rho(\zeta). 
		\]
		In particular $ |\wti R'_{10}(z,\zeta)|, \wti R'_{10}(z,\zeta)  \lesssim |\rho|_4 \lesssim |\rho|_{k+3}$ ($k\geq 1$). It follows from \re{subcase2_B1B2} that
		\begin{align*}
		|B_1(z)| &\lesssim |f|_1 |\rho|_{4} \left( \int_{bD} \frac{d\si(\zeta) }{|\Phi(z,\zeta)|^n} + \int_D \frac{dV(\zeta) }{|\Phi(z,\zeta)|^{n+1}} \right) 
		\\ &\lesssim |f|_1 |\rho|_{k+3} (1+ \log \del(z)), \quad k \geq 1. 
		\end{align*}
		For $B_2$, we use estimate \re{O'_est}: 
		\[
		D_\zeta^k [ O'(|z-\zeta|)] = g_1(z,\zeta) + g_2 (z,\zeta), 
		\] 
		where $|g_1 (z,\zeta)| \lesssim |\rho|_{k+2}$ and $|g_2| \lesssim |\rho|_{k+3}|\zeta-z| $.  
		Write
		\eq{subcase2_B2} 
		B_2(z) = \int_D \frac{f(\zeta) (g_1 R_1')(z,\zeta)}{\Phi^{n+2}(z,\zeta)} \, dV(\zeta) + \int_D \frac{f(\zeta)(g_2 R_1')(z,\zeta)}{\Phi^{n+2}(z,\zeta)} \, dV(\zeta). 
		\eeq
		The second integral is bounded in absolute value by (up to a constant) 
		\[
		|f|_0 |\rho|_{k+3} \int_D \frac{|\zeta-z|}{|\Phi(z,\zeta)|^{n+2} } \, dV(\zeta)  \lesssim |f|_0 |\rho|_{k+3} \int_D 
		\frac{dV(\zeta)}{|\zeta-z| |\Phi(z,\zeta)|^{n+1} } \lesssim \del(z)^{-\yh}. 
		\] 
		For the first integral in \re{subcase2_B2} we apply integration by parts and the resulting integral is bounded up to a constant by 
		\eq{subcase2_ibp}  
		|f|_1 |\rho|_{k+3} \left( \int_{bD} \frac{d \si(\zeta)}{\Phi^{n}(z,\zeta)} 
		+ \int_D \frac{dV(\zeta)}{\Phi^{n+1}(z,\zeta)} \right) \lesssim  |f|_1 |\rho|_{k+3} ( 1+ \log \del(z)). 
		\eeq
		This shows that $|B_2(z)| \lesssim |f|_1 |\rho|_{k+3} \del(z)^{-\yh}$. Combining the estimates we have shown that the domain integral in \re{mu_1=k_dom_int} is bounded by $C |f|_1 |\rho|_{k+3} \del(z)^{-\yh}$. The estimate for the boundary integral in \re{mu_1=k_dom_int} is similar and we leave the details to the reader. In summary we have in this case 
		\[
		| D^\gm_z \Kc_1'f(z)| \lesssim |\rho|_{k+3} |f|_1 \del(z)^{-\yh}. 
		\]
		\\ \medskip 
		\textit{Subcase 3}: $\gm_2 =k+1$, $\mu_2 = k$ in \re{mu_i_der}. From \re{Q'Q''_recall} we can write out $Q'$ as
		\[
		Q'(z,\zeta) = \sum_{i=1}^n \DD{\Phi(z,\zeta)}{\ov{\zeta}_i} \DD{\rho}{\zeta_i}
		= \sum_{i=1}^n \left( - \DD{\rho}{\ov \zeta_i} + O(|\zeta -z|) \right) \DD{\rho}{\zeta_i}, 
		\quad O( |\zeta-z|) \sim \hht D_\zeta^3 \rho (\zeta) (\zeta_i - z_i). 
		\] 
		In view of \re{Phi_zetabar_der} and \re{Phi_zeta_der}, we can write 
		$D^k_\zeta [(Q')^{-1}] = Y_1(z,\zeta) + Y_2(z,\zeta)$, where $|Y_1(z,\zeta)| \lesssim |\rho|_{k+2}$ and $|Y_2(z,\zeta)| \lesssim |\rho|_{k+3} |z-\zeta| $. 
		The integrals in \re{F2_ibp_ktimes} have the form
		\eq{subcase3_ibp}  
		\int_{bD} \frac{f(\zeta) D^k_\zeta[(Q')^{-1}]W_0(z,\zeta)}{ \Phi^{n+1}(z,\zeta)} \, d \si(\zeta),  \quad 
		\int_D \frac{f(\zeta) D^k_\zeta[(Q')^{-1}] W_1(z,\zeta)}{ \Phi^{n+2}(z,\zeta)} \, d V(\zeta). 
		\eeq 
		Here $W_0$ and $W_1$ are some linear combinations of the products of $D_\zeta \rho$, $D_z \Phi(z,\zeta)$, $D_\zeta \Phi (z,\zeta)$ and $\eta(\zeta) +  O'(|z-\zeta|)$. For the domain integral in \re{subcase3_ibp} we have 
		\begin{align}  \label{subcase3_dom_int} 
		\int_D \frac{f(\zeta) D^k_\zeta [(Q')^{-1}] W_1(z,\zeta) }{\Phi^{n+2}(z,\zeta)} \, dV(\zeta)
		= \int_D \frac{f(\zeta) [Y_1 W_1](z,\zeta) }{\Phi^{n+2}(z,\zeta)} \, dV(\zeta) + 
		\int_D \frac{f(\zeta) [Y_2 W_1](z,\zeta) }{\Phi^{n+2}(z,\zeta)} \, dV(\zeta). 
		\end{align}
		For the first term we use integration by parts. 
		Since $|D_\zeta Y_1(z,\zeta) | \lesssim |\rho|_{k+3} $, the resulting integral is bounded by the expression \re{subcase2_ibp}. 
		For the second term in \re{subcase3_dom_int} we estimate directly: 
		\begin{align*}
		\left| \int_D \frac{f(\zeta) [Y_2 W_1](z,\zeta) }{\Phi^{n+2}(z,\zeta)} \, dV(\zeta) \right|  &\lesssim |f|_0 |\rho|_{k+3} \int_D \frac{|\zeta-z|}{|\Phi(z,\zeta)|^{n+2}} \, dV(\zeta) 
		\\ &\lesssim |f|_0 |\rho|_{k+3} \int_D \frac{dV(\zeta)}{|\zeta-z| |\Phi(z,\zeta)|^{n+1} } \lesssim  |f|_0 |\rho|_{k+3} \del(z)^{-\yh}, 
		\end{align*}
		where in the last inequality we applied estimate \re{zeta-z_Phi_int_1} with $\beta = 1$.  Thus the absolute value of the domain integral in \re{subcase3_ibp} is bounded up to constant by $|\rho|_{k+3} |f|_1  \del(z)^{-\yh}$.  We can similarly show the same bound for the boundary integral in \re{subcase3_ibp}. Hence in this case  
		\[
		| D^\gm_z \Kc_1'f(z)| \lesssim |\rho|_{k+3} |f|_1 \del(z)^{-\yh}. 
		\]
		\\ \medskip
		\textit{Subcase 4}: $\gm_2 =k+1$, $\mu_3 = k$ in \re{mu_i_der}. 
		Then the integrals in \re{F2_ibp_ktimes} take the form 
		\[
		\int_{bD} \frac{f(\zeta) D^{k+1}_\zeta \Phi(z,\zeta) W_0(z,\zeta)}{\Phi^{n+1}(z,\zeta)} \, dV(\zeta)
		, \qquad \int_D \frac{f(\zeta) D^{k+1}_\zeta \Phi(z,\zeta)  W_1(z,\zeta)}{\Phi^{n+2}(z,\zeta)} \, dV(\zeta), 
		\] 
		where $W_0, W_1$ are some linear combinations of the products of $D_\zeta \rho( \zeta)$, $D_z \Phi(z,\zeta)$, $D_\zeta \Phi(z,\zeta)$ and $\eta(\zeta) + O'(|z-\zeta|)$. 
		As in the subcase 3 we can write 
		$D^{k+1}_\zeta \Phi(z,\zeta) = Y_1 + Y_2 $, where $|D^{k+1}_\zeta Y_1(z,\zeta)| \lesssim |\rho|_{k+2}$ and $|D^{k+1}_\zeta Y_2(z,\zeta)|  \lesssim |\rho|_{k+3} |\zeta -z| $. The rest of the estimates are the same as in Subcase 3. 
		\\ \\ 
		\textit{Subcase 5: $\gm_2 =k+1$, $\mu_0 = k$}. 
		Then the integrals in \re{F1_ibp_ktimes} can be written as 
		\[
		\int_{bD} \frac{D^k f(\zeta) A_0''(z,\zeta) }{\ov{\Phi}^{n+1}(\zeta,z)} \, d\si(\zeta), \quad
		\int_D \frac{D^k f(\zeta) A_1''(z,\zeta) }{\ov{\Phi}^{n+2}(\zeta,z)} \, dV(\zeta), 
		\]
		where $A_0''$ and $A_1''$ are some linear combination of products of 
		\[
		\eta(z) + O''(|z-\zeta|), \; (Q'')^{-1}, \; D_\zeta \ov{\Phi(\zeta,z)}, \; D_z \ov{\Phi(\zeta,z)}, \; D_\zeta \rho(\zeta).  
		\] 
		Likewise, the integrals in \re{F2_ibp_ktimes} can be written as 
		\[
		\int_{bD} \frac{D^k f(\zeta) A'_0(z,\zeta)}{\Phi^{n+1}(z, \zeta) }  \, d\si(\zeta), \quad 
		\int_D \frac{D^k f(\zeta) A'_1(z,\zeta)}{\Phi^{n+2}(z, \zeta) }  \, dV(\zeta), 
		\] 
		where $A_0'$ and $A_1'$ are linear combination of products of 
		\[
		\eta(\zeta) + O'(|z-\zeta|), \; (Q')^{-1}, \; D_\zeta \Phi(z,\zeta), \; D_z \Phi(z, \zeta), \; D_\zeta \rho( \zeta),  
		\] 
		with coefficients identical to the linear combination $A_0''$ and $A_1''$, respectively. 
		In view of \re{Dgm_Kf}, it suffices to estimate the difference: 
		\[
		\int_{bD} D^k f(\zeta) \left( \frac{A_0''(z,\zeta)}{\ov{\Phi}^{n+1}(\zeta,z)} - \frac{A_0'(z,\zeta)}{\Phi^{n+1}(z,\zeta)} \right) 
		\, d\si(\zeta), \quad 
		\int_D D^k f(\zeta) \left( \frac{A_1''(z,\zeta)}{\ov{\Phi}^{n+2}(\zeta,z)} - \frac{A_1'(z,\zeta)}{\Phi^{n+2}(z,\zeta)} \right) 
		\, dV(\zeta). 
		\]
		We shall again estimate only the domain integral as the proof for the boundary integral is similar. By the expression for $\eta$ and \rl{Lem::Phi_grad_diff}, we have
		\begin{gather*}
		| \eta(z) - \eta(\zeta)| \lesssim |\rho|_3 |\zeta -z|, \quad 
		|D_\zeta \ov{\Phi(\zeta,z)} - D_\zeta \Phi(z,\zeta) | \lesssim |\rho|_3 |\zeta-z|
		\\ 
		|D_z \ov{\Phi(\zeta,z)} - D_z \Phi(z,\zeta) | \lesssim |\rho|_3 |\zeta-z|, \quad 
		|Q''(z,\zeta) - Q'(z,\zeta)| \lesssim |\rho|_3 |\zeta-z|. 
		\end{gather*}
		By procedure similar to the estimates of the $I_2$ integral in the $k=0$ case, we can prove the following estimate
		\begin{align*}
		\left| \int_D D^k f(\zeta) \left( \frac{A_1''(z,\zeta)}{\ov{\Phi}^{n+2}(\zeta,z)} - \frac{A_1'(z,\zeta)}{\Phi^{n+2}(z,\zeta)} \right)
		\, dV(\zeta) \right| \lesssim |\rho|_3 |f|_k  \del(z)^{-\yh}. 
		\end{align*} 
		Consequently we conclude that in this case 
		\[
		| D^\gm_z \Kc_1'f(z)| \lesssim |\rho|_3 |f|_k \del(z)^{-\yh}. 
		\]
		Finally combining the results from all cases we have shown that  
		\[ 
		| D_z^{k+1} \Kc'_1 f (z) | \lesssim  	 \begin{cases} 
		| f|_k |\rho|_{k+3+\all} \del(z)^{-1 +\all }, & \text{if $ 0 < \all \leq \yh$};  \\ 
		| f|_k |\rho|_{k+3}\del(z)^{-\yh}, & \text{if $ \yh \leq \all \leq 1$}.  
		\end{cases}
		\] 
		By \rl{Lem::H-L}, $\Kc_1' f \in C^{k+\min \{ \all, \yh \}} (\ov D)$.  Combined with earlier estimates for $\Kc_1'' f$ and $\Kc_0 f$, the proof of \rp{Prop::K_bdd_intro} is now complete.  
	
	\begin{prop} \label{Prop::hf} 
		Let $D$ be a strictly pseudoconvex domain with $C^3$ boundary. Let $f$ be a function in $C^1(\ov \Om)$ such that $\dbar f \in C^1(\ov \Om)$. Then the following formula holds 
		\begin{align*}
		f(z) &= \Lc f(z)+ \int_D S_z(\dbar_z \dbar_\zeta N)(z,\cdot) \we f- \int_D N(z,\cdot) \we \dbar f
		\\ & \quad + \int_{b D} \Om^{01}_{0,0}(z,\cdot) \we \dbar f
		+ \int_D \Om^0_{0,0}(z,\cdot) \we \dbar f, \quad z \in D .
		\end{align*}
		Here $N$ and $\Lc$ are given by formulae \re{N_def} \re{Lc_def}. 
	\end{prop}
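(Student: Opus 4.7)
The identity is a Koppelman-type homotopy formula adapted to the non-orthogonal projection $\Lc$; when $\dbar f = 0$ it must degenerate to $f = \Lc f$, consistent with $\Lc$ being a projection onto $H^2(D)$. The plan is to combine the Bochner--Martinelli representation with Koppelman's kernel identity \re{Kop_N} and Stokes' theorem on both $bD$ and $D$.

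First, I would apply the Bochner--Martinelli formula for $f\in C^1(\ov D)$,
\[
f(z) \;=\; \int_{bD} f\,\Om^0_{0,0}(z,\cdot) \;-\; \int_D \dbar f \wedge \Om^0_{0,0}(z,\cdot), \qquad z \in D,
\]
and use the bidegree swap $\dbar f \wedge \Om^0_{0,0} = -\Om^0_{0,0}\wedge\dbar f$ (sign $(-1)^{2n-1}$) to recover the $\int_D \Om^0_{0,0}\wedge\dbar f$ term of the proposition. Next, for $\int_{bD} f\,\Om^0_{0,0}$ I would exploit the identity \re{Kop_N}, namely $\Om^0_{0,0} = N + \dbar_\zeta\Om^{01}_{0,0}$ on $bD$ (this uses $\rho\equiv 0$ there, so $\Om^1_{0,0}|_{bD} = N|_{bD}$). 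Since $G(z,\zeta)\neq 0$ for $(z,\zeta)\in D\times bD$ by \re{G-rho_lbd_1}, the form $\Om^{01}_{0,0}(z,\cdot)$ is smooth on $bD$; applying Stokes' theorem $\int_{bD}d_\zeta(f\,\Om^{01}_{0,0}) = 0$ on the closed manifold $bD$---with the $\pa_\zeta$-components vanishing because $\Om^{01}_{0,0}$ has maximal holomorphic bidegree in $\zeta$---converts $\int_{bD} f\,\dbar_\zeta\Om^{01}_{0,0}$ into the required $\int_{bD}\Om^{01}_{0,0}\wedge\dbar f$ contribution.

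Finally, I would treat $\int_{bD} fN$ by Stokes on $D$. Because $\Phi = G-\rho$ is bounded below on $D\times\ov D$ by \re{G-rho_lbd_2}, the kernel $N(z,\cdot)$ is $C^\infty$ on $\ov D$ for each fixed $z\in D$, so no singularity analysis is needed and Stokes gives $\int_{bD} fN = \int_D \dbar f \wedge N + \int_D f\,\dbar_\zeta N$. The definition \re{Lc_def}, $L\,dV(\zeta) = \dbar_\zeta N - S_z(\dbar_z\dbar_\zeta N)$, then identifies $\int_D f\,\dbar_\zeta N = \Lc f(z) + \int_D S_z(\dbar_z\dbar_\zeta N)\,f\,dV$, and the bidegree swap $\dbar f\wedge N = -N\wedge \dbar f$ produces the $-\int_D N\wedge\dbar f$ term. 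The main obstacle is the bookkeeping around the H\"ormander correction $\int_D S_z(\dbar_z\dbar_\zeta N)\,f\,dV$, which was inserted into the definition of $\Lc$ precisely so that $\dbar_z L \equiv 0$: verifying that this correction combines with the Stokes identities on $bD$ and the bidegree-swap signs $\alpha\wedge\beta = (-1)^{\deg\alpha\,\deg\beta}\beta\wedge\alpha$ to balance the full accounting to $f(z)$ is the subtle part of the proof, but once these signs are reconciled and the defining property $\dbar_z L \equiv 0$ is used, the formula closes.
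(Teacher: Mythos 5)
Your proposal follows the paper's proof essentially verbatim: Bochner--Martinelli, then the Koppelman identity \re{Kop_N} to replace $\Om^0_{0,0}$ by $N+\dbar_\zeta\Om^{01}_{0,0}$ on $bD$, then Stokes' theorem on $bD$ and on $D$, and finally the definition \re{ker_L_def} of $L$. One clarification on the step you flag as "the subtle part": the H\"ormander correction $\int_D S_z(\dbar_z\dbar_\zeta N)(z,\cdot)\we f$ does not cancel against anything --- in the paper's own proof it simply remains on the right-hand side (and is carried along into \re{f_int_eqn} later), so the displayed statement of the proposition has evidently dropped that term; there is no further sign-reconciliation needed to make the formula "close".
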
 
	\begin{proof} 
		Starting with the Bochner-Martinelli formula (see for example \cite[Theorem 2.2.1]{C-S01}): 
		\[
		f(z)= \int_{bD} \Om^0_{0,0}(z,\zeta) \we f(\zeta) 
		+ \int_D \Om^0_{0,0}(z,\zeta) \we \dbar f, \quad z \in D. 
		\] 
		By \re{Kop_N}, \re{ker_L_def} and Stokes' theorem, we have 
		\begin{align*}
		f(z) &= \int_{bD} N(z,\cdot) \we f 
		+ \int_{bD} \dbar_\zeta \Om^{01}_{0,0}(z,\cdot) \we f + \int_D \Om^0_{0,0}(z,\zeta) \we \dbar f
		\\ &= 
		\int_{D} \dbar_\zeta N(z,\cdot) \we f 
		- \int_D N(z,\cdot) \we \dbar f + \int_{bD} \Om^{01}_{0,0}(z,\cdot) \we \dbar f + \int_D \Om^0_{0,0}(z,\zeta) \we \dbar f 
		\\ &= \Lc f(z) + \int_D S_z(\dbar_z \dbar_\zeta N)(z,\cdot) \we f
		- \int_D N(z,\cdot) \we \dbar f 
		\\ &\qquad + \int_{bD} \Om^{01}_{0,0}(z,\cdot) \we \dbar f + \int_D \Om^0_{0,0}(z,\zeta) \we \dbar f. \qedhere 
		\end{align*} 
	\end{proof} 
	\begin{prop} \label{Prop::main_est} 
		Let $D$ be a bounded strictly pseudoconvex domain with $C^{k+3+\all}$ boundary, with $0 < \all \leq 1$. Suppose $f$ is orthogonal to the Bergman space $H^2(D)$, is $C^\infty$ in $D$ and is holomorphic in $D \sm D_{-\del}$, for some $\del>0$. Then $f \in  C^{k+ \min \{ \all, \yh \}}(\ov D)$. 
	\end{prop}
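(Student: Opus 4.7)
The plan is to convert the orthogonality hypothesis $\Pc f=0$ into an integral equation of the form $(I+\Kc)f=g$ with $g\in C^\infty(\ov D)$, and then to extract H\"older regularity via the invertibility/smoothing statement of \rp{Prop::K_opt_est}. First I apply the representation formula of \rp{Prop::hf} to $f$. Because $f$ is holomorphic on $D\sm D_{-\del}$, the form $\dbar f$ has support in the compact set $\ov{D_{-\del}}\subset D$, so the boundary integral over $bD$ vanishes and one obtains
\begin{equation*}
f(z)=\Lc f(z)+g(z),\qquad g(z):=-\int_D N(z,\cdot)\we\dbar f+\int_D \Om^0_{0,0}(z,\cdot)\we\dbar f.
\end{equation*}
\rp{Prop::hf} is stated for $f\in C^1(\ov D)$; in our setting this is justified by a standard approximation of $f$ on compact subsets of $D$ and passing to the limit (the mollifications can be chosen so that $\dbar f$ is preserved near its support, and $\Lc$ is continuous on $L^2$).

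Next I claim $g\in C^\infty(\ov D)$. For $z$ in the one-sided collar $\{z\in\ov D:\rho(z)>-\del/2\}$, every $\zeta\in\supp\dbar f$ satisfies $|z-\zeta|\geq c\del$, so the kernels $N(z,\zeta)$ and $\Om^0_{0,0}(z,\zeta)$ are smooth in $z$ and one differentiates under the integral sign to obtain smoothness of $g$ up to $bD$. In the interior, $g=f-\Lc f$ is $C^\infty$ since $f\in C^\infty(D)$ and $\Lc f$ is holomorphic on $D$.

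Now I exploit the orthogonality. By \rp{KP_prop}(ii), $\Pc=(I+\Kc)^{-1}\Lc^\ast$, so $\Pc f=0$ forces $\Lc^\ast f=0$, and the identity $\Kc=\Lc^\ast-\Lc$ yields $\Lc f=-\Kc f$. Substituting into the representation gives the integral equation $(I+\Kc)f=g$ on $D$. Since $g\in C^\infty(\ov D)\subset C^k(\ov D)$ and $I+\Kc$ is bounded invertible on $C^k(\ov D)$ by \rp{Prop::K_opt_est}, there is a unique $\wti f\in C^k(\ov D)$ with $(I+\Kc)\wti f=g$; invertibility of $I+\Kc$ on $L^2$ (from \rp{KP_prop}) forces $\wti f=f$, so $f\in C^k(\ov D)$. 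Plugging back into $f=g-\Kc f$ and applying \rp{Prop::K_opt_est} once more gives $\Kc f\in C^{k+\min\{\all,\yh\}}(\ov D)$, hence $f\in C^{k+\min\{\all,\yh\}}(\ov D)$.

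The main obstacle is verifying that $g$ is smooth \emph{up to the boundary}: this is precisely where the hypothesis that $f$ be holomorphic on $D\sm D_{-\del}$ is indispensable, since only with $\supp\dbar f$ positively separated from $bD$ can one differentiate the kernels $N(z,\cdot)$ and $\Om^0_{0,0}(z,\cdot)$ arbitrarily many times as $z$ approaches $bD$.
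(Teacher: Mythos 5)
Your proposal follows essentially the same route as the paper: combine the homotopy formula of \rp{Prop::hf} with $\Lc f=-\Kc f$ (from $\Pc f=0$) to get $(I+\Kc)f=g$ with $g\in C^\infty(\ov D)$, then invoke invertibility of $I+\Kc$ on $C^k(\ov D)$ and bootstrap once through \rp{Prop::K_opt_est}. One discrepancy to fix: you took the representation formula literally as displayed in \rp{Prop::hf}, but the proof of that proposition (and its use in the paper) actually produces the identity
\[
f(z)=\Lc f(z)+\int_D S_z(\dbar_z\dbar_\zeta N)(z,\cdot)\we f-\int_D N(z,\cdot)\we\dbar f+\int_{bD}\Om^{01}_{0,0}(z,\cdot)\we\dbar f+\int_D\Om^0_{0,0}(z,\cdot)\we\dbar f,
\]
since $\dbar_\zeta N=L\,dV+S_z(\dbar_z\dbar_\zeta N)$ by \re{ker_L_def}; the displayed statement of \rp{Prop::hf} drops the $S_z$ term. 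Your $g$ must therefore include $h_1(z):=\int_D S_z(\dbar_z\dbar_\zeta N)(z,\cdot)\we f$; this costs nothing, as $h_1\in C^\infty(\ov D)$ by the first statement in \re{L0_int_est}, but without it the identity $(I+\Kc)f=g$ is false as written. Also note that for the smoothness of the $N$-term near $bD$ the relevant point is not merely $|z-\zeta|\geq c\del$ but the lower bound \re{G-rho_lbd_1} on the denominator $G(z,\zeta)-\rho(\zeta)$, which that separation (together with $-\rho(\zeta)\geq\del$) does supply. Your two extra precautions --- justifying the application of \rp{Prop::hf} to an $f$ not known a priori to lie in $C^1(\ov D)$, and identifying $f$ with the $C^k$ solution of $(I+\Kc)u=g$ via injectivity of $I+\Kc$ on $L^2$ --- are points the paper passes over silently, and your treatment of them is sound.
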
  
	Here we recall the notation. 
	\[
	D_{-\del}:= \{ z \in D: \rho(z) < -\del \}. 
	\]
	\begin{proof}
		Let $\Pc$ be the Bergman projection for $D$. By assumption $\Pc f \equiv 0$. 
		By \rp{KP_prop}, $\Lc^\ast f = (I + \Kc) \Pc f \equiv 0$, which implies that $\Kc f = \Lc^\ast f - \Lc f = -\Lc f$. Consequently by \rp{Prop::hf} and the assumption that $\dbar f \equiv 0$ on $b D$, 
		\begin{equation} \label{f_int_eqn}  
		\begin{aligned} 
		f(z)  &= - \Kc f(z) + \int_D S_z(\dbar_z \dbar_\zeta N)(z,\cdot) \we f - \int_D N(z, \cdot) \we \dbar f
		\\ & \qquad + \int_{b D} \Om^{01}_{0,0}(z,\cdot) \we \dbar f + \int_D \Om^0_{0,0}(z, \cdot) \we \dbar f 
		\\ &= - \Kc f(z) + \int_D S_z(\dbar_z \dbar_\zeta N)(z,\cdot) \we f - \int_D N(z, \cdot) \we \dbar f
		+ \int_D \Om^0_{0,0}(z, \cdot) \we \dbar f, \quad z \in D. 
		\end{aligned}
		\end{equation} 
		Here the kernels $\Om^0_{0,0}$ and $N$ are given by formulae \re{Om0_def} and \re{N_def} on $D$: 
		\eq{Nexp}
		N(z,\zeta) = \frac{1}{(2 \pi \sqrt{-1} )^n} \frac{1}{[G(z,\zeta) - \rho(\zeta)]^n} \left( \sum g_1^i(z,\zeta) d \zeta_i \right) \we \left( \sum_i \dbar_\zeta g_1^i(z,\zeta) \we d \zeta_i \right)^{n-1}; 
		\eeq
		\[
		\Om_{0,0}^0(z,\zeta) = \frac{1}{(2\pi \sqrt{-1})^n} \frac{1}{|\zeta -z|^{2n}} \sum_{i=1}^n  
		(\ov{\zeta_i - z_i}) d \zeta_i \we \left( \sum_{j=1}^n (
		d \ov \zeta_j - d \ov z_j ) \we d \zeta_j \right)^{n-1}, 
		\]  
		where $G$ and $g_1$ are given by expressions \re{G_def} and \re{g1_def}. 
		We can rewrite \re{f_int_eqn} as
		\begin{equation} \label{f_int_eqn_h} 
		f + \Kc f = h := h_1 + h_2 + h_3, 
		\end{equation} 
		where we denote
		\[
		h_1(z) := \int_D S_z(\dbar_z \dbar_\zeta N)(z,\cdot) \we f, 
		\quad 
		h_2(z) := - \int_D N(z,\cdot) \we \dbar f,
		\quad
		h_3 (z) := \int_D \Om^0_{0,0} (z,\cdot) \we \dbar f.
		\] 
		We show that each $h_i$ defines a function in $C^\infty(\ov D)$. By the first statement in \re{L0_int_est}, we have $h_1 \in C^\infty( \ov D)$. 
		For $h_2$, note that the functions $G(z,\zeta), g_1(z,\zeta)$ are $C^\infty$ in $z$, and the following estimate (see \re{G-rho_lbd_2}) holds 
		\[
		G(z,\zeta) - \rho(\zeta) \geq c(-\rho(z) - \rho(\zeta) + |z-\zeta|^2), \quad z, \zeta \in \ov D.  
		\] 
		In particular, for $\zeta \in \supp (\dbar f)$, i.e. $\zeta \in D_{-\del}$, the function $G(z,\zeta) - \rho(\zeta)$ is bounded below by some positive constant for all $z \in \ov D$. Hence in view of \re{Nexp}, $h_2 \in C^\infty(\ov D)$. To see that $h_3 \in C^\infty(\ov D)$, we note that by assumption $\dbar f \in C^\infty_c( D)$, and the argument is done using integration by parts. 
		
		Now, by \rp{Prop::K_bdd_intro}, $\Kc$ is a compact operator on the Banach space $C^k(\ov D)$. Thus by the Fredholm alternative, either $I + \Kc$ is invertible or $\ker (I + \Kc)$ is non-empty.  Suppose $f \in \ker(I + \Kc)$; then $f = - \Kc f$ and $\sqrt{-1} \Kc f = - \sqrt{-1}f$. If $f \neq 0$, this would imply that $-\sqrt{-1}$ is an eigenvalue of the operator $\sqrt{-1} \Kc$, which is impossible since $\sqrt{-1} \Kc$ is self-adjoint and have only real eigenvalues. Therefore we conclude that $f \equiv 0$, and $\ker (I+ \Kc) = \emptyset$. This implies $I+\Kc$ is an invertible operator on the space $C^k(\ov D)$. 
		
	Applying this to equation \re{f_int_eqn_h} and $h \in C^\infty(\ov \Om)$, we obtain $f \in C^k(\ov D)$. By \rp{Prop::K_bdd_intro}, we have $\Kc f \in C^{k+ \min\{ \all, \yh \}} (\ov D)$. Hence $f = - \Kc f + h \in C^{k+ \min \{ \all, \yh \}} (\ov D)$. 
	\end{proof}
We can now finally prove \rt{Thm::Bker_intro}. 
\\ 
\nid \tit{Proof of \rt{Thm::Bker_intro}.} 
Fix $w_0 \in D$, we can write the $B(\cdot,w_0) = \Pc \var$, where $\var \in C^\infty_c (D)$. Applying \rp{Prop::main_est} to $f = \Pc \var - \var$ we get $\Pc \var - \var \in C^{k+ \min \{ \all, \yh \}}(\ov D)$. Hence $\Pc \var \in C^{k+ \min \{ \all, \yh \}}(\ov D)$.  \qed 
	
	\section{Proof of \rt{Thm::Bproj_intro}}
In this section we prove \rt{Thm::Bproj_intro}, which will also follow from \rp{Prop::K_bdd_intro}.   
	First we need an approximation lemma. 
	\begin{lemma} \label{Lem::approx} 
	Let $D$ be a bounded Lipschitz domain in $\R^N$.  Suppose $f \in C^{k+\beta}(\ov D)$, where $k$ is a non-negative integer and $0 < \beta < 1$. Then there exists a family $\{ f_\ve \}_{\ve>0} \in C^\infty(D) \cap C^{k+\beta}(\ov D)$ such that $f_\ve$ converges to $f$ uniformly as $ \ve \to 0$. Furthermore, $ |f_\ve|_{k+\beta}$ is uniformly bounded by $|f|_{k+\beta}$.   
	\end{lemma}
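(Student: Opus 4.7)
The plan is to combine a Stein-type extension operator with standard mollification. Since $D$ is bounded and Lipschitz, Stein's extension theorem yields a bounded linear extension operator $E: C^{k+\all}(\ov D) \to C^{k+\all}(\R^N)$ such that $(Ef)|_{\ov D} = f$ and
\[
|Ef|_{C^{k+\all}(\R^N)} \leq C_{D,k,\all} \, |f|_{C^{k+\all}(\ov D)}.
\]
We may further multiply $Ef$ by a fixed cutoff equal to $1$ on a neighborhood of $\ov D$ so that the extension has compact support, without changing its values on $\ov D$ and at the cost of enlarging the constant $C$. Let $\chi \in C^\infty_c(B_1(\0))$ be a nonnegative radial mollifier with $\int \chi \, dV = 1$, set $\chi_\ve(x) = \ve^{-N}\chi(x/\ve)$, and define
\[
f_\ve := (Ef) \ast \chi_\ve.
\]

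By standard properties of convolution, $f_\ve \in C^\infty(\R^N)$, so in particular $f_\ve \in C^\infty(D)$. Since $Ef$ is continuous with compact support (hence uniformly continuous on $\R^N$), $f_\ve$ converges to $Ef$ uniformly on $\R^N$ as $\ve \to 0$; restricting to $\ov D$ gives $f_\ve \to f$ uniformly on $\ov D$.

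For the H\"older bound, I would verify that convolution with $\chi_\ve$ preserves the $C^{k+\all}$ norm. For any multi-index $\beta$ with $|\beta| \leq k$, differentiation under the integral yields $D^\beta f_\ve = (D^\beta Ef) \ast \chi_\ve$, and since $\chi_\ve \geq 0$ has unit mass, $\|D^\beta f_\ve\|_{L^\infty(\R^N)} \leq \|D^\beta Ef\|_{L^\infty(\R^N)}$. For the top-order H\"older seminorm, writing
\[
D^k f_\ve(x) - D^k f_\ve(y) = \int_{\R^N} \bigl[ D^k Ef(x-z) - D^k Ef(y-z) \bigr] \chi_\ve(z) \, dV(z)
\]
and taking absolute values gives $|D^k f_\ve(x) - D^k f_\ve(y)| \leq |D^k Ef|_{C^\all(\R^N)} |x-y|^\all$. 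Combining these estimates with the boundedness of $E$ yields
\[
|f_\ve|_{C^{k+\all}(\ov D)} \leq |f_\ve|_{C^{k+\all}(\R^N)} \leq |Ef|_{C^{k+\all}(\R^N)} \leq C_{D,k,\all}\, |f|_{C^{k+\all}(\ov D)},
\]
uniformly in $\ve$, which is the required uniform bound.

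There is no substantive obstacle here; the only nontrivial ingredient is the existence of the extension operator $E$ on Lipschitz domains, which I would simply cite from Stein's monograph rather than reproducing. All other steps are routine properties of mollifiers.
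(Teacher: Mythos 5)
Your proof is correct, but it takes a genuinely different route from the paper's. The paper does not extend $f$ at all: it localizes to a special Lipschitz graph domain $\om = \{x_N > \psi(x_1,\dots,x_{N-1})\}$ via a partition of unity and exploits the interior cone condition ($x + K \subseteq \om$ for a fixed cone $K$), mollifying with a kernel supported in $-K$ so that every evaluation $f(x-\ve y)$ stays inside $\ov D$. The resulting one-sided mollification is elementary and self-contained, and on the model domain it gives the seminorm bound with constant exactly $1$. You instead invoke an extension operator $E: C^{k+\all}(\ov D) \to C^{k+\all}(\R^N)$ and then mollify symmetrically; this outsources all the geometric difficulty to the extension theorem, makes the mollification step completely routine, and yields the stronger conclusion $f_\ve \in C^\infty(\R^N)$, at the price of the constant $C_{D,k,\all}$ from $E$ (harmless here, since the lemma's bound is only up to a constant in the paper's conventions). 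One caution on the citation: Stein's extension theorem in his monograph is stated for Sobolev spaces $W^{k,p}$; for H\"older classes $C^{k+\all}$ on a Lipschitz domain you should either check that Stein's operator is also bounded on $\mathrm{Lip}(\all)$-type spaces or, more cleanly, appeal to Whitney's extension theorem together with the fact that Lipschitz domains are uniformly quasiconvex (so that membership in $C^{k+\all}(\ov D)$ implies the Whitney jet compatibility conditions). This is standard but is the one point where your argument leans on something nontrivial; everything else is routine and correct.
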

\begin{rem} \label{Rem::Holder_cvg} 
  Let $f_\ve$ be constructed as above. It follows from \cite[Prop 2.3]{Shi23} that $f_\ve$ converges to $f$ in $|\cdot|_{\tau} $, for any $0 \leq \tau < k+\beta$.   
\end{rem} 
	\begin{proof}
		It suffices to take $D$ as a special Lipschitz domain of the form $ \om = \{ x \in \R^N: x_N> \psi(x_1, \dots, x_{n-1}), |\psi|_{L^\infty} \leq C  \} $, as the general case follows by standard partition of unity argument. There exists some cone $K$ such that for any $x \in \om$, $x + K \subseteq \om$. 
		Let $\phi$ be a $C^\infty$ with compact supported in $-K$ and such that $\phi \geq 0$ and $\int_{\R^N} \phi =1$. Let $\phi_\ve = \frac{1}{\ve^n} \phi(\frac{x}{\ve})$. Then we can define for $x \in \om$ the function 
		\[
		f_\ve(x) = f \ast \phi_\ve (x) = \int_{-K} f(x- \ve y) \phi(y) \, dV(y) . 
		\]
		It is clear that $f_\ve \in C^\infty (D)$ and
		\begin{align*}
		| f_\ve(x) - f(x)|  &= \left| \int_{-K} [f(x- \ve y) -f(x)] \phi (y) \, dV(y)  \right| 
		\\ &\leq \int_{-K} \left|f(x-\ve y) - f(x) \right| \phi(y) \, dV(y) 
		\\ &\leq |f|_\beta \ve^\beta \int_{-K} y^\all \phi(y) \, dV(y) \lesssim |f|_\beta \ve^\beta. 
		\end{align*}
		Hence $f_\ve $ converges to $f$ uniformly in $\om$. Let $x_1, x_2 \in \om$. Then for all $\ve>0$, 
		\begin{align*}
		\left|  f_\ve(x_1) - f_\ve(x_2) \right| 
		&= \left| \int_{-K} [ f(x_1 - \ve y)  -  f(x_2 - \ve y) ] \phi(y) \, dV(y)   \right| 
		\\ &\leq |f|_\beta |x_1 - x_2|^\beta. 
		\end{align*}
		Accordingly $|f_\ve|_\beta$ is uniformly bounded by $|f|_\beta$. This proves the case $ k=0$. For $k\geq 1$ the proof is similar and we leave the details to the reader.  
	\end{proof}
	
	\begin{lemma} \label{Lem::L_opt_k+beta_est} 
		Let $D$ be a strictly pseudoconvex domain in $\C^n$ with $C^{k+3}$ boundary, where $k$ is a non-negative integer. Then for any $ \beta >0$, 
		\[
		| \Lc f |_{k} \lesssim |f|_{k+\beta}. 
		\]
	\end{lemma}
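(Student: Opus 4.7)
I split $\Lc=\Lc_0+\Lc_1$ following the definitions after \re{ker_L_def} and estimate each piece separately. By \rl{Lem::approx} I may assume $f\in C^\infty(D)\cap C^{k+\beta}(\ov D)$ with $|f|_{k+\beta}$ uniformly controlled; the general case then follows from density, since both sides are continuous in the $C^{k+\beta}$ norm.

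For $\Lc_0 f$: applying \rp{Prop::Hormander} in the $z$ variable to $\dbar_z\dbar_\zeta N$, whose $\zeta$-coefficients lie in $C^k$ (thanks to $bD\in C^{k+3}$) and which is a $\dbar$-closed $(0,1)$ form in $z$, gives $L_0\in C^\infty_z\times C^k_\zeta$ with uniform bounds on any $D_\delta\times D_\delta$. Differentiating under the integral sign then yields $\|D^k_z\Lc_0 f\|_{L^\infty(D)}\lesssim |f|_0\le |f|_{k+\beta}$.

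For $\Lc_1 f$: I use Ligocka's expression $L_1(z,\zeta)=[\eta(\zeta)+O'(|z-\zeta|)]/\Phi^{n+1}(z,\zeta)$ from \re{L1_Lig_exp}. Taking $D^k_z$ and applying the product and chain rules produces a finite sum of terms of the form
\[
\int_D \frac{D^{\gamma_1}_z\bigl[\eta(\zeta)+O'(|z-\zeta|)\bigr]\,S_{\gamma_2}(z,\zeta)}{\Phi^{n+1+\gamma_2}(z,\zeta)}\,f(\zeta)\,dV(\zeta), \qquad \gamma_1+\gamma_2=k,
\]
where $S_{\gamma_2}$ is a polynomial in $D_z^l\Phi$ with $l\leq k$. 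Then, exactly as in the proof of \rp{Prop::K_opt_est}, I iterate the integration-by-parts formulae of \rl{Lem::ibp} in the $\zeta$ variable $\gamma_2$ times to each such integral, reducing the singular power of $\Phi$ back to $n+1$ on domain integrals and $n$ on boundary integrals. The resulting sum takes the form
\[
\sum_{\mu_0\le k}\left[\int_{bD}\frac{D^{\mu_0}_\zeta f(\zeta)\,W_0(z,\zeta)}{\Phi^n(z,\zeta)}\,d\sigma(\zeta)+\int_D\frac{D^{\mu_0}_\zeta f(\zeta)\,W_1(z,\zeta)}{\Phi^{n+1}(z,\zeta)}\,dV(\zeta)\right],
\]
where $|W_0|,|W_1|\lesssim 1$ uniformly, using the bounds $|D^l_\zeta Q'^{-1}|\lesssim |\rho|_{l+3}$, $|D^l_\zeta D_z\Phi|\lesssim |\rho|_{l+3}$ and $|D^{l+1}_\zeta\rho|\lesssim |\rho|_{k+3}$ valid for $l\le k$.

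I then cash in the surplus regularity $\beta>0$ by the splitting $D^{\mu_0}_\zeta f(\zeta)=D^{\mu_0}_\zeta f(z)+[D^{\mu_0}_\zeta f(\zeta)-D^{\mu_0}_\zeta f(z)]$. For the remainder, $|D^{\mu_0}_\zeta f(\zeta)-D^{\mu_0}_\zeta f(z)|\lesssim |f|_{k+\beta}|\zeta-z|^\beta$, and by \re{zeta-z_Phi_int_2} (taking the radius equal to $\diam D$) the integrals $\int_D|\zeta-z|^\beta/|\Phi|^{n+1}\,dV$ and $\int_{bD}|\zeta-z|^\beta/|\Phi|^n\,d\sigma$ are bounded uniformly in $z$, giving a contribution $\lesssim |f|_{k+\beta}$. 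For the constant piece $D^{\mu_0}_\zeta f(z)\cdot A_{\mu_0}(z)$, I exploit the reproducing identity $\Lc 1=1$ (\rp{KP_prop}): since $\Lc_1\,1=1-\Lc_0\,1$ and $\Lc_0\,1\in C^k(\ov D)$ (by the $\Lc_0$-estimate above), tracking the same IBP reduction applied to $f\equiv 1$ identifies $A_{\mu_0}(z)$ (up to lower-order IBP boundary corrections that are bounded by the same mechanism) with $D^k_z\Lc_1 1(z)$, which is uniformly bounded. Hence the constant piece contributes $\lesssim |f|_k\le |f|_{k+\beta}$.

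\textbf{Main obstacle.} The technical heart of the proof is that direct estimation of the IBP-reduced integrals yields only a logarithmic bound $(1+|\log\delta(z)|)$ via \rl{Lem::Phi_int_est}, which is not uniform up to $bD$. Resolving this requires simultaneously exploiting the Hölder surplus $|\zeta-z|^\beta$ (to gain absolute integrability via \re{zeta-z_Phi_int_2}) for the "remainder" piece, \emph{and} the reproducing property $\Lc 1=1$ (to cancel the log) for the "constant" piece. The bookkeeping of the iterated IBPs so that these two mechanisms line up cleanly is the delicate part; the rest of the argument is a direct adaptation of the techniques already deployed in the proof of \rp{Prop::K_opt_est}.
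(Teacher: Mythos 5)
Your reduction (splitting off $\Lc_0$, taking $D^k_z$ of the Ligocka expression for $L_1$, iterating the integration by parts of \rl{Lem::ibp}, and then cashing in the surplus $\beta$ via the splitting $D^{\mu_0}f(\zeta)=D^{\mu_0}f(z)+[D^{\mu_0}f(\zeta)-D^{\mu_0}f(z)]$ together with \re{zeta-z_Phi_int_2}) is exactly the paper's route up to the last step. The divergence, and the gap, is in how you dispose of the frozen pieces $D^{\mu_0}f(z)\,A_{\mu_0}(z)$. The identity $\Lc 1=1$ only controls the terms that survive when the whole reduction is run on $f\equiv 1$, namely the terms with $\mu_0=0$: their \emph{sum} equals $D^k_z\Lc_1 1(z)=-D^k_z\Lc_0 1(z)$, which is bounded, and since they all carry the common factor $f(z)$ this does handle the $\mu_0=0$ contribution. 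But whenever an integration by parts lands a derivative on $f$ you produce terms with $\mu_0\geq 1$; these vanish identically when $f\equiv 1$, so the reproducing identity says nothing about the corresponding coefficients $A_{\mu_0}(z)=\int_D W/\Phi^{n+1}\,dV$, and a priori each of these is only $O(1+|\log\delta(z)|)$ by \rl{Lem::Phi_int_est}. Your parenthetical "identifies $A_{\mu_0}(z)$ with $D^k_z\Lc_1 1(z)$" is therefore false for $\mu_0\geq 1$, and the logarithm you flagged as the main obstacle is not actually cancelled for those terms.

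The fix the paper uses is different and purely local: it splits the IBP-reduced weight as $W_1=Y_1+Y_2$ with $|Y_2|\lesssim|\rho|_{k+3}|\zeta-z|$ (absorbed by \re{zeta-z_Phi_int_2}) and observes that $Y_1$ involves derivatives of $\rho$ of order at most $k+2$. Hence on the frozen piece $D^{\mu_0}f(z)\int_D Y_1/\Phi^{n+1}\,dV$ one can afford \emph{one more} integration by parts within the $C^{k+3}$ regularity of the boundary; this lowers the power of $\Phi$ to $n$ (and to $n-1$ on the boundary term), and $\int_D dV/|\Phi|^{n}$ and $\int_{bD}d\sigma/|\Phi|^{n-1}$ are absolutely bounded, with no logarithm. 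This mechanism works uniformly in $\mu_0$ and does not use $\Lc 1=1$ at all (the paper reserves the reproducing identity for the proof of \rp{Prop::Bproj}). Your argument can be repaired by replacing the reproducing-identity step with this extra integration by parts for every frozen piece; as written, the $\mu_0\geq 1$ terms are unaccounted for.
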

	\begin{proof} 
		Write $D^k_z \Lc f(z)$ as a linear combination of 
		\eq{Lf_kder} 
		\int_D f(\zeta) \frac{W(z,\zeta)}{\Phi^{n+1+\mu}(z,\zeta)} \, dV(\zeta), \quad \mu \leq k, 
		\eeq
		where $W(z,\zeta)$ is some linear combination of products of  
		\[
		D^{\tau_1}_z [l(\zeta) + O'(|z-\zeta|) ], \quad D_z^{\tau_2} \Phi(z,\zeta), \quad \mu_1, \mu_2 \leq k. 
		\]
		Applying integration by parts formulae \re{ibp_1} and \re{ibp_bdy}  iteratively to the integral \re{Lf_kder} until it can be written as a linear combination of 
		\eq{Lf_kder_ibp}
		\int_{bD} D^{\eta_0} f(\zeta)  \frac{ W_0(z,\zeta)}{ \Phi^{n+1}(z,\zeta)} \, d\si(\zeta),
		\quad 
		\int_D D^{\mu_0} f(\zeta)  \frac{ W_1(z,\zeta)}{ \Phi^{n+1}(z,\zeta)} \, dV(\zeta), 
		\quad \eta_0, \mu_0 \leq k. 
		\eeq
		Here $W_0$ and $W_1$ are some linear combination of 
		\eq{Lf_kder_ibp_lc} 
		D_\zeta^{\mu_1} D^{\tau_1}_z [l(\zeta) + O'(|z-\zeta|) ], \; D^{\mu_2}_\zeta [ (Q')^{-1} ], 
		\; D^{ \mu_3 +1}_\zeta \Phi(z,\zeta), \; D^{\mu_4}_\zeta D_z^{\tau_2} \Phi(z,\zeta), \; D^{\mu_5+1} _\zeta \rho(\zeta),
		\eeq
		with $\mu_i \leq k$, $0 \leq i \leq 5$ and $\sum_{i=0}^5 \mu_i \leq k$.  
		We shall only estimate the domain integral in \re{Lf_kder_ibp}, as the proof of the boundary integral is similar. 
		In view of \re{Lf_kder_ibp_lc}, we can write $W_1 (z,\zeta) = Y_1(z,\zeta) + Y_2 (z,\zeta)$, where $|Y_1(z,\zeta)|  \lesssim |\rho|_{k+2}  $, and $|Y_2(z,\zeta)| \lesssim |\rho|_{k+3} |\zeta-z|$.  Write
		\[
		\int_D D^{\mu_0} f(\zeta)  \frac{ W_1(z,\zeta)}{ \Phi^{n+1}(z,\zeta)} \, dV(\zeta) 
		=  \int_D D^{\mu_0} f(\zeta)  \frac{ Y_1(z,\zeta)}{ \Phi^{n+1}(z,\zeta)} \, dV(\zeta)
		+  \int_D D^{\mu_0} f(\zeta)  \frac{ Y_2(z,\zeta)}{ \Phi^{n+1}(z,\zeta)} \, dV(\zeta). 
		\] 
		The $Y_2$ integral is bounded by 
		\begin{align*}
		\left| \int_D D^{\mu_0} f(\zeta)  \frac{ Y_2(z,\zeta)}{ \Phi^{n+1}(z,\zeta)} \, dV(\zeta) \right| 
		\lesssim |\rho|_{k+3} |f|_k \int_D \frac{|\zeta-z|}{| \Phi(z,\zeta)|^{n+1}} \lesssim  |\rho|_{k+3} |f|_k, 
		\end{align*} 
	where we used \rl{zeta-z_Phi_int_2}. For the $Y_1 $ integral we use the assumption that $ f \in C^{k+\beta}$, $\beta >0$.  
		\begin{align*}
		\int_D D^{\mu_0} f(\zeta)  \frac{ Y_1(z,\zeta)}{ \Phi^{n+1}(z,\zeta)} \, dV(\zeta) 
		&=   \int_D [D^{\mu_0} f(\zeta) -  D^{\mu_0} f(z) ] \frac{ Y_1(z,\zeta)}{ \Phi^{n+1}(z,\zeta)} \, dV(\zeta) + D^{\mu_0} f(z) \int_D \frac{Y_1(z,\zeta) \, dV(\zeta)}{\Phi^{n+1} (z,\zeta)}. 
		\end{align*} 
		The first integral on the right-hand side is bounded by 
		\[ 
		|\rho|_{k+2} |f|_{k+\beta} \int_D \frac{|\zeta-z|^\beta \,  dV(\zeta)}{|\Phi^{n+1} (z,\zeta)|} 
		\lesssim |\rho|_{k+2} |f|_{k+\beta} .
		\]
		For the other integral, since $Y_1$ involves derivatives of $\rho$ up to order $k+2$, we can apply integration by parts and take one more derivative of $\rho$ against $\zeta$. The resulting integrals are bounded by $|\rho|_{k+3} $ up to a constant. Summing up the estimates we have
		\[
		\left| \int_D D^{\mu_0} f(\zeta)  \frac{ Y_1(z,\zeta)}{ \Phi^{n+1}(z,\zeta)} \, dV(\zeta) \right|  \lesssim |\rho|_{k+3} |f|_{k+\beta}. 
		\] 
		Consequently this shows that $|D^k_z \Lc f(z)| \lesssim |\rho|_{k+3} |f| _{k+\beta}$, finishing the proof. 
	\end{proof}
	We are now ready to prove \rt{Thm::Bproj_intro}. 
	\begin{prop} \label{Prop::Bproj} 
		Let $D$ be a strictly pseudoconvex domain in $\C^n$. Let $k$ be a non-negative integer, and $0 < \all, \beta \leq 1$. 
		\begin{enumerate}[(i)]
			\item 
			Suppose $bD \in C^{k+3}$. Then $\Lc$ defines a bounded operator from $C^{k+\beta}(\ov D)$ to $C^{k+ \frac{\beta}{2}}(\ov D)$. 
		\item 
Suppose $bD \in C^{k+3+\all}$. Then $\Pc, \Lc^\ast$ define bounded operators from $C^{k+\beta}(\ov D)$ to $C^{k+ \min \{\all, \frac{\beta}{2} \} }(\ov D)$. 
		\end{enumerate}
	\end{prop}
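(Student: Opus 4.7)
The plan is to first prove (i) by directly estimating $D^{k+1}_z \Lc f(z)$ via integration by parts and the splitting technique already established in \rl{Lem::L_opt_k+beta_est}, then deduce (ii) by combining (i) with the compactness of $\Kc$ from \rp{Prop::K_opt_est} and the operator identities from \rp{KP_prop}.

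For part (i), I will write $D^{k+1}_z \Lc f(z)$ as a linear combination of integrals of the form
\[
\int_D f(\zeta) \frac{W(z,\zeta)}{\Phi^{n+1+\mu}(z,\zeta)} \, dV(\zeta), \qquad \mu \leq k+1,
\]
where $W$ involves $D^{\tau_1}_z[\eta(\zeta) + O'(|z-\zeta|)]$ and $D^{\tau_2}_z \Phi(z,\zeta)$ with $\tau_1,\tau_2 \leq k+1$. Applying the integration by parts formulas \re{ibp_1} and \re{ibp_bdy} iteratively $\min(\mu,k)$ times produces domain integrals with $\Phi^{n+1}$ (when $\mu \leq k$, handled exactly as in \rl{Lem::L_opt_k+beta_est}) or $\Phi^{n+2}$ (when $\mu=k+1$), and boundary integrals with analogous denominators. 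For the $\Phi^{n+2}$ case, decompose the resulting kernel as $W_1 = Y_1 + Y_2$ with $|Y_1(z,\zeta)| \lesssim |\rho|_{k+2}$ and $|Y_2(z,\zeta)| \lesssim |\rho|_{k+3}|\zeta-z|$. The $Y_2$-integral is bounded by $|f|_k \int_D |\zeta-z|/|\Phi|^{n+2}\, dV \lesssim \delta(z)^{-1/2}$ via \re{zeta-z_Phi_int_1} with $\beta=1$. For the $Y_1$-integral, insert $D^{\mu_0}f(z)$ and write
\[
\int_D D^{\mu_0} f(\zeta) \frac{Y_1}{\Phi^{n+2}} dV
= \int_D \bigl[D^{\mu_0} f(\zeta) - D^{\mu_0} f(z)\bigr]\frac{Y_1}{\Phi^{n+2}} dV + D^{\mu_0} f(z) \int_D \frac{Y_1}{\Phi^{n+2}} dV .
\]
The first term is controlled by $|f|_{k+\all}\int_D |\zeta-z|^\all/|\Phi|^{n+2}\, dV \lesssim |f|_{k+\all}\delta(z)^{-1+\all/2}$ using \re{zeta-z_Phi_int_1} with $\beta = \all$. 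For the second term, since $Y_1$ involves derivatives of $\rho$ only up to order $k+2$, one additional integration by parts passes a $\dbar_\zeta$ onto $Y_1$ (producing $|\rho|_{k+3}$) and returns us to $\Phi^{n+1}$, which is then uniformly bounded by $1+\log\delta(z)$ by \re{Phi_int_1}. Combining these estimates gives $|D^{k+1}_z \Lc f(z)| \lesssim |f|_{k+\all}\,\delta(z)^{-1+\all/2}$, and \rl{Lem::H-L} with $\beta = \all/2$ yields $\Lc f \in C^{k+\all/2}(\ov D)$.

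For part (ii), observe that \re{Kc_def} gives $\Lc^\ast = \Lc + \Kc$. If $f \in C^{k+\all}(\ov D)$, part (i) yields $\Lc f \in C^{k+\all/2}(\ov D)$, and \rp{Prop::K_opt_est} yields $\Kc f \in C^{k+\min\{\all,\frac12\}}(\ov D) \hookrightarrow C^{k+\all/2}(\ov D)$ since $\all/2 \leq \min\{\all,\frac12\}$ for $0<\all<1$. Hence $\Lc^\ast f \in C^{k+\all/2}(\ov D)$. Next, by \rp{KP_prop}, $\Pc f = (I+\Kc)^{-1} \Lc^\ast f$. By \rp{Prop::K_opt_est}, $I + \Kc$ is invertible on $C^k(\ov D)$, so $h := \Pc f \in C^k(\ov D)$. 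The identity $h = \Lc^\ast f - \Kc h$ combined with $\Lc^\ast f \in C^{k+\all/2}$ and $\Kc h \in C^{k+\min\{\all,\frac12\}} \subset C^{k+\all/2}$ then bootstraps to give $h \in C^{k+\all/2}(\ov D)$, completing the proof.

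The main obstacle is the careful bookkeeping in part (i), specifically ensuring the integration by parts never requires more than $k+3$ derivatives of $\rho$ (which is all our hypothesis $bD \in C^{k+3}$ provides) and that the $Y_1/Y_2$ split produces exactly the H\"older exponent $\all/2$ rather than something weaker. The crucial point is that the $Y_2$-term gives $\delta^{-1/2}$, which is already better than $\delta^{-1+\all/2}$ for $\all < 1$, so the decay rate is dictated by the $Y_1$-term where the $C^{k+\all}$ hypothesis on $f$ (via a Taylor-type subtraction) supplies the missing $|\zeta-z|^\all$ factor.
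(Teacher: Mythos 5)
Your argument is correct, and both parts rest on the same core ingredients as the paper's proof (the integration by parts formulae of \rl{Lem::ibp}, the $Y_1+Y_2$ splitting of the numerator, the H\"older subtraction $D^{\mu_0}f(\zeta)-D^{\mu_0}f(z)$, and the integral estimates \re{zeta-z_Phi_int_1} and \re{Phi_int_1}, followed by \rl{Lem::H-L}); part (ii) is essentially verbatim the paper's argument. Where you genuinely diverge is in the organization of part (i). The paper first invokes the reproducing property $\int_D L(z,\zeta)\,dV(\zeta)\equiv 1$ to write $f-\Lc f=\int_D[f(z)-f(\zeta)]L(z,\zeta)\,dV(\zeta)$ and only then differentiates; the Leibniz expansion then produces the term $\int_D D^{k+1}_zf(z)\,L\,dV$, so $f$ must be differentiated $k+1$ times. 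This forces the paper to first mollify ($f_\ve$ from \rl{Lem::approx}) and to prove the auxiliary stability estimate \rl{Lem::L_opt_k+beta_est} in order to pass to the limit. You instead differentiate the kernel directly, so at most $k$ derivatives ever land on $f$ (and only via integration by parts), and you introduce the subtraction only at the final $\Phi^{n+2}$ stage, disposing of the compensating term $D^{\mu_0}f(z)\int_D Y_1\Phi^{-(n+2)}\,dV$ by one extra integration by parts rather than by the identity $\int_D D_zL\,dV=0$. This buys you a cleaner proof that dispenses with the approximation machinery altogether. The one caveat is the case $k=0$: there your extra integration by parts is performed under only $C^3$ boundary regularity, whereas \rl{Lem::ibp} is stated for $C^4$ boundaries; you should note explicitly that the formula goes through at $C^3$ because the numerator $Y_1$ involves only $\hht D^2\rho$, so that $P'$ and the $\dbar_\zeta$ term involve at most three derivatives of $\rho$ (the paper sidesteps this entirely for $k=0$ by using the vanishing of $\int_D D_zL\,dV$, which requires no integration by parts at all).
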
 
	\begin{proof}
(i) We first prove the statement for $\Lc$ and we begin by considering the case $k=0$. Assume first that $0 < \beta <1$. Let $f \in C^\beta (\ov D)$ and $\{ f_\ve \}_{\ve >0}$ be the functions constructed in \rl{Lem::approx}. In particular, we have 
\begin{enumerate}
    \item \label{fve_Holder}  
    $f_\ve \in C^\infty(D) \cap C^{\beta}(\ov D)$; 
    \item \label{fve_cvg} 
    $|f_\ve-f|_{\eta} \to 0$, for any $0 \leq \eta < 
  \beta$ (\rrem{Rem::Holder_cvg}).  
\end{enumerate}
 We claim that for each $\Lc f_\ve \in C^{\frac{\beta}{2}} (\ov D)$ with $ |\Lc f_\ve|_{\frac{\beta}{2}}$ uniformly bounded by some constant $C_0$. Assuming the claim holds, then for any $z_1, z_2 \in D$, we have 
		\begin{equation} \label{Lf_diff}  
		\begin{aligned} 
		| \Lc f(z_1) -\Lc f(z_2)| &\leq | \Lc f(z_1) - \Lc f_\ve (z_1) | + | \Lc f_\ve (z_1)- \Lc f_\ve (z_2) | + | \Lc f(z_2) - \Lc f_\ve (z_2) |
		\\ &\leq 2 |\Lc (f-f_\ve)|_0 + |\Lc f_\ve |_{ \frac{\beta}{2}} |z_1 - z_2|^{\frac{\beta}{2}}  
		\\ &\leq  2 |\Lc (f-f_\ve)|_0  + C_0 |z_1 - z_2|^\frac{\beta}{2}.  
		\end{aligned}  
		\end{equation}
		Now, given a function $g \in C^\eta(\ov D)$ with $\eta >0$, using the reproducing property of $\Lc$ we have 
		\begin{equation} \label{Lg_sup_norm_est}
		\begin{aligned}  
		|\Lc g(z) | &= \left| \int_D [ g(\zeta) - g(z) ] L(z,\zeta) \, dV(\zeta) + g(z) \right| 
		\\ &\lesssim |\rho|_2 |g|_{\eta} \int_D  \frac{|\zeta-z|^\eta}{|\Phi(z,\zeta)|^{n+1}} \, dV(\zeta) + |g|_0 \lesssim (| \rho|_2 +1) |g|_\eta, 
		\end{aligned} 
		\end{equation}
  where in the last inequality we applied \rl{Lem::int_est}. 
  
 Applying \re{Lg_sup_norm_est} with $g = f-f_\ve$ and using property 
 \re{fve_cvg} from above, we get $|\Lc ( f- f_\ve)|_0 \to 0$ as $\ve \to 0$. It follows from 
  \re{Lf_diff} that $|\Lc f(z_1) - \Lc  f(z_2)| \leq C_0 | z_1 - z_2|^{\frac{\beta}{2}}$. This shows that $\Lc f \in C^{\frac{\beta}{2}} (\ov D)$. 
  
It remains to prove the claim, namely, $|\Lc f_\ve |_{\frac{\beta}{2}}$ is bounded by some constant $C_0$ independent of $\ve$. To this end, we will show that $|\Lc f_\ve |_{\frac{\beta}{2}} \leq C_0' | f_\ve|_{\beta} $, where $C_0'$ depends only on $|\rho|_3$. Since $| f_\ve |_{\beta} \leq |f|_\beta$, this proves the claim. 
		
For $f_\ve \in C^\infty(D) \cap C^\beta(\ov D)$, we have
\[
  f_\ve(z) - \Lc f_\ve(z) = \int_D \left[ f_\ve(z) - f_\ve(\zeta) \right] L(z,\zeta) \, dV(\zeta), 
\] 
where we used the reproducing property of kernel $L$: $\int_D L(z,\zeta) dV(\zeta) \equiv 1 $.  
Then 
\[
  \DD{f_\ve}{z_i}(z) - \DD{\Lc f_\ve}{z_i}(z) 
= \int_D \DD{f_\ve}{z_i}(z) L(z,\zeta) \, dV(\zeta)
+ \int_D \left[ f_\ve(z) - f_\ve(\zeta) \right] \DD{L}{z_i}(z,\zeta) \, dV(\zeta) . 
\]
The first term on each side cancels out, which leaves us with 
\begin{align*}
&\DD{\Lc}{z_i}(z,\zeta) \, dV(\zeta)
 = \int_D [f_\ve(\zeta) - f_\ve(z)] \pp{L}{z_i} (z,\zeta) \, dV(\zeta)
\\ &= \int_D \left[ f_\ve (\zeta) - f_\ve (z) \right] \left[ \frac{\pa_{z_i} \left[ l(\zeta) + O'(|z-\zeta|) \right] }{\Phi^{n+1}(z,\zeta)} - (n+1)  
 \frac{ [ l(\zeta) + O'(|z-\zeta|) ]\pa_{z_i} \Phi (z,\zeta)}{\Phi^{n+2}(z,\zeta)} \right] \, dV(\zeta) . 
		\end{align*} 
For $\ov z_i$ derivatives we have a similar expression. By estimate \re{O'_est} and \re{Phi_lbd}, we obtain
		\begin{align*}
	| \na \Lc f_\ve(z) | &\lesssim |\rho|_3 |f_\ve|_\beta  \left( \int_D \frac{|z-\zeta|^\beta }{ |\Phi(z,\zeta)|^{n+1} } \, dV(\zeta) + \int_D \frac{|z-\zeta|^\beta }{|\Phi(z,\zeta)|^{n+2}} \, dV(\zeta)
		\right) \\ &\lesssim |\rho|_3 |f_\ve|_\beta \left(
		\int_D \frac{|z-\zeta|^\beta }{ |\Phi(z,\zeta)|^{n+1} } \, dV(\zeta) 
		+  \int_D \frac{dV(\zeta) }{|\zeta-z|^{2-\beta} \Phi(z,\zeta)^{n+1}} 
		\right) 
		\\ &\lesssim |\rho|_3 |f_\ve|_\beta 
\left( 1+ \del(z)^{-1+\frac{\beta}{2}} \right) , 
		\end{align*} 
where in the last step we applied \rl{Lem::int_est}. It follows by Hardy-Littlewood lemma that $\Lc f_\ve \in C^{\frac{\beta}{2}}(\ov D)$ and $|\Lc f_\ve|_{\frac{\beta}{2}}$ is bounded by $C_0' |f_\ve|_\beta$, where $C_0'$ depends only on $|\rho |_3$.  Combined with the earlier argument, this proves (i) for $k=0$ and $0 < \beta <1$. If $k=0$ and 
$\beta =1$, we can repeat the above proof without doing the approximation, obtaining in the end 
\[
  |\na \Lc f(z)| \lesssim |\rho|_3 |f|_1 \left(  (1+ \del(z)^{-\yh} \right), \quad z \in D. 
\]
Hence by Hardy-Littlewood lemma, $\Lc f \in C^{\yh}(\ov D)$.   		

Next we consider the case $k \geq 1$. Suppose $f \in C^{k+\beta} (\ov D)$, for $0<\beta < 1$. As before we first construct $\{ f_\ve \}_{\ve >0}$ such that
\begin{enumerate} 
    \item  
    $f_\ve \in C^\infty(D) \cap C^{k+\beta}(\ov D)$; 
    \item \label{fve_cvg_k+beta}   
    $|f_\ve-f|_{\eta} \to 0$, for any $0 \leq \eta < k+\beta$. 
\end{enumerate} 

We claim that $|\Lc f_\ve|_{k+\frac{\beta}{2}}$ is bounded uniformly by some constant $C_0$. Assuming the validity of the claim, for $ z_1, z_2 \in D$ and $\ell \leq k$, we have 
\begin{equation} \label{Lf_der_diff}
\begin{aligned} 
  | D^\ell \Lc f(z_1) - D^\ell \Lc f(z_2)| &\leq | D^\ell \Lc f(z_1) - D^\ell \Lc f_\ve (z_1) | + | D^\ell \Lc f_\ve (z_1)- D^\ell  \Lc f_\ve (z_2) | 
  \\ &\quad + | D^\ell \Lc f(z_2) - D^\ell \Lc f_\ve (z_2) |
\\ &\leq 2 |\Lc (f-f_\ve)|_\ell + |\Lc f_\ve |_{k+ \frac{\beta}{2}} |z_1 - z_2|^{\frac{\beta}{2}} 
	\\ &\leq  2 |\Lc (f-f_\ve)|_k  + C_0 |z_1 - z_2|^{\frac{\beta}{2}}.  
\end{aligned}
\end{equation} 
		As before we want to show that $ |\Lc (f-f_\ve)|_k \to 0$ as $ \ve \to 0$. 
		Here the estimate is more subtle since $D \Lc g = \Lc Dg$ does not hold and thus one cannot estimate as easily as in \re{Lg_sup_norm_est}.  Instead we apply \rl{Lem::L_opt_k+beta_est} to get 
		\eq{Lg_der_est_recall} 
		| \Lc (f - f_\ve) |_k \lesssim | f-f_\ve|_{k+ \tau}  , \quad \text{ for any $\tau >0$}. 
		\eeq
By property \re{fve_cvg_k+beta} above, we have $|f-f_\ve|_\eta \to 0 $ for any $\eta < k+\beta$. Hence  \re{Lg_der_est_recall} implies $|\Lc (f-f_\ve) |_k \to 0 $. Letting $\ve \to 0$ in \re{Lf_der_diff}, we get $\Lc f \in C^{k+\frac{\beta}{2}} (\ov D) $, which proves the reduction. 
		
To finish the proof it remains to show that there exists a constant $C_0'>0$ (which we will show depends only on $|\rho|_{k+\beta}$) such that $|\Lc f_\ve |_{k+ \frac{\beta}{2}} \leq C_0' |f_\ve|_{k+\beta}$. Then by \rl{Lem::approx} we get $|\Lc f_\ve |_{k+ \frac{\beta}{2}}  \leq C_0' |f_\ve|_{k+\beta} \leq C_0' | f |_{k+\beta}$. We have
 \begin{align*}
D^{k+1}_z \left[ f_\ve(z) - \Lc f_\ve(z) \right]
&= D^{k+1}_z \int_D [f_\ve(z) - f_\ve(\zeta)] L(z,\zeta) \, dV(\zeta)
		\\ &= \int_D D^{k+1}_z f_\ve(z) L(z,\zeta) \, dV(\zeta)
		+ \int_D \sum_{\substack{\gm_1+ \gm_2 = k+1 \\ 1 \leq \gm_2 \leq k} } D^{\gm_1}_z f_\ve(z)  D^{\gm_2}_z L(z,\zeta) \, dV(\zeta) 
		\\ & \quad + \int_D (f_\ve(z) - f_\ve(\zeta)) D^{k+1}_z L(z,\zeta) \, dV(\zeta). 
		\end{align*}
		The first integral is equal to $D^{k+1}_z f(z)$. Hence
		\begin{align*}
		D^{k+1}_z  \Lc f_\ve(z) &= - \int_D \sum_{\substack{\gm_1+ \gm_2 = k+1 \\ 1 \leq \gm_2 \leq k} } D^{\gm_1}_z f_\ve(z)  D^{\gm_2}_z L(z,\zeta) \, dV(\zeta) + \int_D [f_\ve(\zeta) - f_\ve(z)] D^{k+1}_z L(z,\zeta) \, dV(\zeta):
		\\ &:= I_1 + I_2, 
		\end{align*}
		where we denote the first and second integral by $I_1$ and $I_2$, respectively. 
		For $I_1$, we can write it as  a linear combination of integrals of the form
		\begin{align} \label{Lf_der_I1}  
		D_z^{\mu_0} f_\ve(z) \int_D \frac{ W(z,\zeta )}{\Phi^{n+1+\mu_1}(z,\zeta)} \, dV(\zeta) , \quad \mu_0, \mu_1 \leq k, 
		\end{align} 
		where $W$ is some linear combination of $D^{\mu_2}_z[l(\zeta) + O'(|z-\zeta|)]$ and $D_z^{\mu_3} \Phi(z,\zeta)$ with  $\mu_2, \mu_3 \leq k$. We apply integration by parts formulae \re{ibp_1} and \re{ibp_2} iteratively to the integral in \re{Lf_der_I1} until it can be written as a linear combination of 
		\eq{Lf_der_I1_ibp} 
		\int_{bD} \frac{W_0( z,\zeta)}{\Phi^n(z,\zeta)} \, d\si(\zeta) , \quad 
		\int_D \frac{W_1( z,\zeta)}{\Phi^{n+1} (z,\zeta)} \, dV(\zeta). 
		\eeq 
		Here $ W_0, W_1$ are linear combinations of products of 
		\[
		D_\zeta^{\tau_1} D_{z_i}^{\mu_2} [l (\zeta) + O'( |z-\zeta|)] , 
		\;  D_\zeta^{\tau_2}[( Q')^{-1}], \; D^{\tau_3+1}_{\zeta} \Phi(z,\zeta), 
		\; D_\zeta^{\tau_4} D_z^{\mu_3} \Phi(z,\zeta), 
		\; D_{\zeta}^{\tau_5 +1} \rho(\zeta), 
		\]
		with $\tau_i  \leq k$, $1 \leq i \leq 5$. 
		Note that all these quantities are bounded by some constant multiple of $|\rho|_{k+3}$.  
		It follows that the integrals in \re{Lf_der_I1_ibp} and hence $I_1$ is bounded by 
  \begin{equation} \label{Lf_I1_est} 
		|I_1| \lesssim |f_\ve|_k |\rho|_{k+3} \left( \int_{bD} \frac{d \si(\zeta)}{|\Phi(z,\zeta)|^n}  + \int_D \frac{dV(\zeta)}{|\Phi(z,\zeta)|^{n+1}}  \right) 
		\lesssim  |f_\ve|_k |\rho|_{k+3} (1 + \log \del(z) ), 
  \end{equation} 
  where we applied \rl{Lem::Phi_int_est}.   
		The integral $I_2$ can be written as a linear combination of integrals of the form: 
		\eq{Lf_der_I2}  
		\int_D \frac{[f_\ve(\zeta) - f_\ve(z)] W(z,\zeta)}{\Phi^{n+2+\mu}(z,\zeta)} \, dV(\zeta), \quad \mu \leq k. 
		\eeq 
		Here $W(z,\zeta)$ is some linear combination of 
		\[
		D^{\tau_0}_z [ l(\zeta) + O'(|z -\zeta |)]  , \quad D^{\tau_1}_z \Phi(z,\zeta), \quad \tau_0, \tau_1 \leq k+1. 
		\]
		
		If $ \mu \leq k-1$ we can integrate by parts and estimate just like $I_1$ to show that $ |I_2| \lesssim |f_\ve|_k |\rho|_{k+3} (1 + \log|\del(z)|)$.  If $\mu = k$, we apply integration by parts formulae \re{ibp_1} and\re{ibp_bdy} until the integral \re{Lf_der_I2} can be expressed as a linear combination of integrals of the form 
		\eq{Lf _der_I2_ibp} 
		\int_{bD}  \frac{D^{\eta_0}_\zeta f_\ve (\zeta) A_0(z,\zeta)}{\Phi^{n+1} (z,\zeta)} \, d\si(\zeta), \quad \int_D \frac{ D^{\mu_0} f_\ve(\zeta) A_1(z,\zeta)}{\Phi^{n+2} (z,\zeta)} \, dV(\zeta), \quad \eta_0, \mu_0 \leq k. 
		\eeq 
		Here $A_0, A_1$ are linear combination of products of 
		\eq{Lf_der_A0A1}
		D^{\mu_1}_\zeta[ l(\zeta) + O'(|z -\zeta |)] , \; 
		D^{\mu_2}_\zeta [(Q')^{-1}], 
		\; 
		D^{\mu_3 + 1}_\zeta \Phi(z,\zeta), 
		\; 
		D^{\mu_4}_\zeta D_z \Phi(z,\zeta), 
		\; 
		D^{\mu_5+1}_\zeta \rho(\zeta),  
		\eeq 
		where $\mu_i \leq k$, and $\sum_{i=0}^5 \mu_i = k$. We now use the fact that $\rho \in C^\infty(D) \cap C^{k+3}(\ov D)$ satisfies the estimate 
		\[
		|D^j_z \rho(z) | \lesssim C_j |\rho|_{k+3} \left(1 + \del(z)^{k+3 -j} \right), \quad j =0,1,2, \dots. 
		\] 
We shall only estimate the domain integral in \re{Lf _der_I2_ibp}, as the estimate for the boundary integral is similar. In view of \re{Lf_der_A0A1} we can write $A_1(z,\zeta) = X_1(z,\zeta) + X_2(z,\zeta)$, where $|X_1(z,\zeta) | \lesssim |\rho|_{k+2}$ and $|X_2(z,\zeta) | \lesssim |\rho|_{k+3} |\zeta-z|$. Write 
		\[
		\int_D \frac{ D^{\mu_0} f_\ve(\zeta) A_1(z,\zeta)}{\Phi^{n+2} (z,\zeta)} \, dV(\zeta) 
		=   \int_D \frac{ D^{\mu_0} f_\ve(\zeta) X_1(z,\zeta)}{\Phi^{n+2} (z,\zeta)} \, dV(\zeta)  +   \int_D \frac{ D^{\mu_0} f_\ve(\zeta) X_2(z,\zeta)}{\Phi^{n+2} (z,\zeta)} \, dV(\zeta). 
		\]
By estimates \re{Phi_lbd} and \re{zeta-z_Phi_int_1}, we see that 
		\begin{align*}
		\left| \int_D \frac{ D^{\mu_0} f_\ve(\zeta) X_2(z,\zeta)}{\Phi^{n+2} (z,\zeta)} \, dV(\zeta) \right| 
		&\lesssim |\rho|_{k+3} |f_\ve|_k \int_D \frac{|\zeta-z|}{|\Phi (z,\zeta)|^{n+2}} \, dV(\zeta) 
		\\ &\lesssim |\rho|_{k+3} |f_\ve|_k \int_D \frac{ dV(\zeta)}{|\zeta-z| |\Phi (z,\zeta)|^{n+1}} 
		\lesssim  |\rho|_{k+3} |f_\ve|_k \left( 1 + \del(z)^{-\yh} \right), \quad \mu_0 \leq k. 
		\end{align*}  
		On the other hand, we can write 
\begin{equation} \label{Lopt_X1_int} 
\begin{aligned} 
  &\int_D \frac{ D^{\mu_0} f_\ve(\zeta) X_1(z,\zeta)}{\Phi^{n+2} (z,\zeta)} \, dV(\zeta) 
\\ &\quad = \int_D \frac{ [D^{\mu_0}_\zeta f_\ve(\zeta)- D^{\mu_0}_z f_\ve(z)] X_1(z,\zeta)}{\Phi^{n+2} (z,\zeta)} \, dV(\zeta) + D^{\mu_0}_z f_\ve(z) \int_D \frac{X_1(z,\zeta)}{\Phi^{n+2} (z,\zeta)} \, dV(\zeta).  
\end{aligned}	
\end{equation}
Since $f_\ve \in C^{k+\beta}(\ov D)$, the first integral on the right-hand side above is bounded up to a constant by 
\begin{align*}
  | \rho|_{k+2} |f_\ve|_{k+\beta} \int_D \frac{|\zeta-z|^\beta}{|\Phi(z,\zeta)|^{n+2}} \, dV(\zeta) 
&\lesssim    | \rho|_{k+2} |f_\ve|_{k+\beta} \int_D \frac{dV(\zeta)}{|\zeta-z|^{2-\beta} |\Phi(z,\zeta)|^{n+1}} 
\\ & \lesssim |\rho|_{k+2} |f_\ve|_{k+\beta} \left(1+\del(z)^{-1+\frac{\beta}{2}} \right).    
\end{align*}
For the second integral on the right-hand side of \re{Lopt_X1_int},  we can integrate by parts and bound the resulting expression by
		\[
		| \rho|_{k+3} |f_\ve|_k  \left( \int_{bD} \frac{d \si( \zeta)}{|\Phi(z,\zeta)|^n} + \int_D \frac{dV(\zeta)}{|\Phi(z,\zeta)|^{n+1} }\right)  \lesssim |\rho|_{k+3} |f_\ve|_k (1+ \log  \del(z)). 
		\]
Hence we have shown that 
 \[
  |I_2| \lesssim |f_\ve|_{k+\beta} |\rho|_{k+3} 
\left( 1+ \del(z)^{-1 + \frac{\beta}{2}} \right). 
 \]
Combined with the estimate \re{Lf_I1_est} for $I_1$ , this shows that $|D_z^{k+1} \Lc f_\ve (z) | \lesssim |f_\ve|_{k+\beta} |\rho|_{k+3} \left( 1+ \del(z)^{-1 + \frac{\beta}{2}} \right)$. By \rl{Lem::H-L}, $\Lc f_\ve \in C^{k+\frac{\beta}{2}}(\ov D)$ and $ |\Lc f_\ve|_{k+\frac{\beta}{2}} \leq C_0' |f_\ve|_{k+\beta}$ where $C_0'$ depends only on $|\rho|_{k+3}$. This proves the claim and hence the case when $0 < \beta <1$. 
Finally if $\beta =1$, the same proof works without the use of the approximation.  
		\\ \\
		\medskip 
		(ii)
		From \rp{Prop::K_bdd_intro} we know that $\Kc f \in C^{k+ \min\{ \all,  \yh\}}(\ov D)$ if $f \in C^k(\ov D)$ (and in particular if $f \in C^{k+\beta}(\ov D)$ for $0 < \beta \leq 1$). By (i), $\Lc f \in C^{k+\frac{\beta}{2}} (\ov D)$.
  Since $ \Lc^\ast f = \Kc f + \Lc f $, and $\min\{ \all, \yh, \frac{\beta}{2} \} = \min \{ \all, \frac{\beta}{2} \}$, we have 
\[
  \Lc^\ast f \in C^{ k+ \min \{ \all, \frac{\beta}{2} \}}(\ov D) . 
\]
 Finally by the integral equation $(I+\Kc) Pf =  \Lc^\ast f$, and the fact that $I +\Kc$ is invertible in the space $C^{k}(\ov D)$, we get $P f \in C^k(\ov D)$ and thus $\Kc Pf \in C^{k+ \min \{\all, \yh\}}(\ov D)$ by \rp{Prop::K_bdd_intro}. Therefore $Pf = -\Kc Pf + \Lc^\ast f \in C^{k+ \min \{ \all, \frac{\beta}{2} \}}(\ov D)$.  
	\end{proof}

	\bibliographystyle{amsalpha}
	\bibliography{Reference_Bergman} 
	
\end{document}